\def\Aset{\mathbb{A}}
\def\Nset{\mathbb{N}}
\def\Rset{\mathbb{R}}
\def\diag{{\mathrm {\tt diag}}}
\def\bfo{{\mathbf 1}}
\def\tmu{\tilde{\mu}}
\def\tm{\tilde{m}}
\def\meas{{\mathrm {meas}}}
\def\hL{\hat{L}}
\def\hv{\hat{v}}
\def\hnu{\hat{\nu}}
\definecolor{bbleu}{rgb}{0.35, 0.01, 0.85}
\definecolor{vert}{rgb}{0.04, 0.65, 0.4}
\definecolor{orange}{rgb}{1,0.5,0}
\newcommand{\comment}[1]{}
\renewcommand{\t}{^{\mbox{\tiny\sf T}}}
\providecommand{\keywords}[1]{\textbf{{Keywords:}}#1}
\newtheorem{theo}{Theorem}
\newtheorem*{theo*}{Theorem}
\newtheorem{defi}[theo]{Definition}
\newtheorem{lemm}[theo]{Lemma}
\newtheorem{assumption}{Assumption}
\newtheorem*{prop*}{Proposition}
\theoremstyle{definition}
\newtheorem*{rema*}{Remark}
\newtheorem*{remas*}{Remarks}
\title{ A fast-slow model for adaptive resistance evolution}
\author{Pastor E. P\'erez-Estigarribia\thanks{ Polytechnic School, National University of Asunci\'on, P.O.\ Box 2111 SL, San Lorenzo, Paraguay, email: \href{mailto:pestigarribia@est.pol.una.py}{pestigarribia@est.pol.una.py}} \and Pierre-Alexandre Bliman\thanks{Sorbonne Universit\'e, Universit\'e Paris-Diderot SPC, Inria, CNRS, Laboratoire Jacques-Louis Lions, \'equipe Mamba, 75005 Paris, France. {\bf Corresponding author}, email: \href{mailto:pierre-alexandre.bliman@inria.fr}{pierre-alexandre.bliman@inria.fr}} \and Christian E.\ Schaerer\thanks{ Polytechnic School, National University of Asunci\'on, P.O.\ Box 2111 SL, San Lorenzo, Paraguay, email: \href{mailto:cschaer@pol.una.py}{cschaer@pol.una.py}}}
\begin{document}
	
	\maketitle
	
	\tableofcontents
	
\begin{abstract}
Resistance to insecticide is considered nowadays one of the major threats to insect control, as its occurrence reduces drastically the efficiency of chemical control campaigns, and may also perturb the application of other control methods, like biological and genetic control. In order to account for the emergence and spread of such phenomenon as an effect of exposition to larvicide and/or adulticide, we develop in this paper a general time-continuous population model with two life phases, subsequently simplified through slow manifold theory. The derived models present density-dependent recruitment and mortality rates in a non-conventional way.
We show that in absence of selection, they evolve in compliance with Hardy-Weinberg law; while in presence of selection and in the dominant or codominant cases, convergence to the fittest genotype occurs.
The proposed mathematical models should allow for the study of several issues of importance related to the use of insecticides and other adaptive phenomena.
\end{abstract}

\keywords{Adaptive evolution; Insecticide resistance; Slow manifold theory; Stability; Hardy-Weinberg Law}

\section{Introduction}

\subsection{Insect pests and insecticide use}

A tiny insect is one of the deadliest animals for human: the mosquitoes.
Nearly 700 million people contract diseases transmitted by mosquito every year \cite{caraballo2014emergency}.
Just for the transmission of malaria, almost a million people die every year \cite{201827}, and 3.4 billion people are at risk worldwide  \cite{world2014global}.
Amongst the 150 arboviruses that cause diseases in humans, about 20 that are transmitted by mosquitoes are of primary medical importance \cite{roehrig2009arboviruses}.
In particular dengue, transmitted by the primary vector {\em Aedes aegypti} and whose human and economic costs are staggering, is ranked as the most important mosquito-borne viral disease with epidemic potential in the world \cite{bhatt2013global,world2014global,shepard2016global}.
On the whole, vector-borne diseases generate a heavy public health problem and high economic cost in many world regions.

Besides, it has been suggested that insects destroy about 20\% of the annual production of crops worldwide \cite{sharma2017insect}. The agricultural damage caused by insects may be direct, as well as indirect, through the transmission of plant diseases \cite{oerke2006crop}.
Two factors contribute to the importance of insects as agricultural pests: their diversity --- two thirds of known species, around  600,000, are phytophagous ---, and the fact that practically all plant species are consumed by at least one species of phytophagous insect --- even some agricultural pests can hit many species of plants \cite{douglas2018strategies}.
The issues induced by agricultural pests are therefore a challenge for global food production. 

The most commonly used methods to suppress or reduce insect populations are based on chemical treatment.
Conventional control strategy of mosquito populations use larvicides and/or adulticides \cite{valle2015controle,ShawCatterucci}. However, this strategy is affected by the evolution of resistance \cite{schechtman2015costly, hemingway2000insecticide} which reduces the efficiency of the chemical control campaigns and their lifespan \cite{koella2009}. Nowadays, in addition to an alarming propagation of vectors, most of the species involved are showing resistance to many kinds of insecticides \cite{world2014global,vontas2012insecticide}.
Resistance not only reduces the efficiency of chemical control methods, but may also perturb the application of other control methods, like biological and genetic controls, through the undesired fitness advantage that it provides to the local mosquito species, see e.g.\ \cite{Azambuja-Garcia:2017aa,Garcia:2017aa}.

\subsection{Modelling of insecticide resistance evolution}

Resistance to insecticides is a man-made example of natural selection, and the factors that govern its origin and spread are of academic interest and of applied importance \cite{daborn2004genetics}.
The issue of insecticide resistance provides a contemporary natural model to study how new adaptations evolve by  selection: the selection agent is known (as an example a given insecticide), the evolution is recent and rapid (few years after the application), and the biological and genetic mechanisms are often known (many insect genes that code the targets for insecticides have been identified and cloned \cite{labbe2011evolution}).
In this regard, due to the cost, risk and logistical difficulties in the field study of resistance evolution, verisimilar mathematical models may help to improve management strategies of insecticides, apart from allowing to learn more about adaptive evolution in the context of complex life history.

Literature related to models of evolution by natural selection is quite extensive.
For an overview of the state of knowledge, the reader is referred to the following classical texts \cite{hofbauer1998evolutionary,ewens2011changes,schuster2011mathematical,schuster2011mathematics,burger2011some}.
Reproduction and natural selection involve inherently stochastic effects, and a typical modelling approach includes stochastic time-discrete processes \cite{huillet:hal-00526859}, which may however present 
considerable mathematical complexity \cite{burger2011some}.
In classical population genetics selection models, it is common to appeal to the law of large numbers and assume an infinite population size, to omit the effect of stochasticity and to focus the mathematical analysis on the evolution of the allelic and genotypic relative frequencies by selection.
This is the case of Fisher selection equation or evolutionary game theory (see \cite{schuster1983replicator,hofbauer1982game,hofbauer1998evolutionary}).
However, in control strategies one wishes to study, in addition to changes in allele relative frequencies, the efficiency in reducing or suppressing population.

For simplicity, it is common in population genetics, that the elementary mechanisms by which fitness is reduced are not specified \cite{barbosa2012importance, levick2017two}, in other words it is not clear if the measure of fitness is related to fertility, viability or both.
However, considering the diversity of life history in insects, realistic scenarios require a more specific treatment of the mechanisms that affect fitness. 
For example, the use of larvicides and/or adulticides for the control of mosquitoes affects viability differently in the pre-reproductive and reproductive phases. In addition, viability may decrease in a growing population due to the limitation of some resources by competition, and may depend on density, which in turn may affect 
different classes of individuals in the population on a non-homogeneous way.
	
Different approaches have been proposed in the literature to model insecticide resistance, with several objectives.
In \cite{taylor1975insecticide} a time-discrete model is presented for three genotypes that considers a resistance allele with intermediate dominance in an autosomal locus.
A time-discrete model of insecticide resistance is developed in \cite{comins1977development} that includes migration, in order to understand variations in the time required for insects to develop resistance.
Likewise, \cite{taylor1979suppression} explored a model proposed in \cite{crow1970introduction} to quantify changes in gene frequency between zygotes in each generation.
The model in question is time-discrete, with intermediate dominance alleles in an autosomal locus, and assumes migration and a density-dependent growth of the population.
On the other hand, \cite{curtis1985theoretical} and \cite{mani1985evolution} used time-discrete models classical in populations genetics to show the essential characteristics of the selection dynamics for resistance by one or two insecticides on two autosomal locus.
More recently, this line of research was explored again \cite{levick2017two,south2018insecticide}.
Also, \cite{barbosa2012importance} developed genetic models to predict changes in the fitness and frequency of resistance alleles in synergistic scenarios. All these models consider Mendelian 
inheritance, but do not take into account the existence of 
several stages of life, and different selective pressure for each phase.

Furthermore, models have been proposed to develop new approaches, such as the idea of evolution-proof insecticides \cite{koella2009,read2009make,gourley2011slowing}. In particular, these models were developed to explore the control of malaria vector mosquito. Since the malaria parasite requires a mosquito with a long life expectancy, these authors explore strategies to slow down the evolution of resistance using combinations of larvicides and late-life-acting insecticides. In this regard a classical population genetics model is used in \cite{read2009make} to quantify the change in relative frequency of a resistant allele, assuming a constant population of adults. A population genetics model that considers the change in relative frequency of a resistance allele in different age classes is considered in \cite{koella2009}. The two works cited above differ from \cite{gourley2011slowing}, in which an aged-structured population of variable size is modelled by delay differential equations. However, this model does not possess inheritance mechanism for autosomal genes.
	
A non-traditional approach to resistance models, related to the one presented in the present paper, is given in \cite{langemann2013multi}.
Inspired by the study of herbicide resistant weed, this paper proposes a continuous-time deterministic model that combines the logistic growth in a population with the rearrangement of alleles in the genotypes given by inheritance.
In addition, the model applies to cases of multiple loci using tensor product, and extends to polyploid and other numbers of alleles.
However, it does not allow to model the several phases of an insect life, which present different selective pressures in their respective ecological niches.

Last, \cite{schechtman2015costly} proposed a compartmental model with age-structure to quantify the time required to reverse resistance in a dengue vector mosquito.
A model of dimension 15 (five life stages for three genotypes) is introduced, and numerical simulations are conducted, in order to evaluate the loss of resistance, assuming that in absence of insecticide the resistant genotype has lower fitness.

\subsection{The proposed modelling approach}
	
\comment{The mere formulation of a problem is far more essential than its solution, which may be merely a matter of mathematical or experimental skill. To raise new questions, new possibilities, to regard old problems from a new angle requires creative imagination and marks real advances in science. ---Einstein}

The issue studied in the present paper is the formulation of a model of the evolution of resistance to larvicides and/or adulticides on an autosomal gene for an insect population, taking into account the complexity of life history; as well as an analysis of this model.
	
Insects are diploid organisms with sexual reproduction and usually with several phases of life. In many cases the genes involved in insecticides resistance are autosomal and follow principles of Mendelian inheritance.
Furthermore, they have a pre-reproductive and a reproductive phases and quite frequently, like for holometabolism, the immature stages are well differentiated from the mature stages.
In such cases the larvae do not compete with adults, since they are found in different ecological niches, and are consequently subjected to different selective pressures (e.g.\ larvicides and adulticides factors). In response to the aforementioned, we propose:
	\begin{itemize}
		\item a continuous-time compartmental model based on a life history with two leading phases, a pre-reproductive and reproductive;
		\item a reproductive phase that includes a heredity function for Mendelian inheritance given by an autosomal gene;
		\item and selective pressures of larvicides and/or adulticides for each genotype that may affect fertility and density-dependent viability in each life phase. 
\end{itemize}

\comment{Using a heuristic principles by analogy it is possible to propose the following reductionist solution to the formulated problem:}

The life cycle of an insect may be seen analogously to a sequence of chemical modifications that a sustenance goes through.
Like in a chemical reaction network \cite{KLONOWSKI198373}, some stages of life are slower than others.
This last feature opens up the possibility of using slow manifold theory to deduce simpler inheritance models.
The modelling framework presented below is based on this principle.
More precisely, according to whether the reproductive phase is fast or slow with respect to the non-reproductive one, we obtain two distinct classes of models, which present density-dependent recruitment and mortality rates in a non-conventional way.
Simplifying the two-life phase model through slow manifold theory yields two different inheritance models, which present density-dependent recruitment and mortality rates in a non-conventional way.

These inheritance models can be seen as Mendelian general models for systems with density-dependent recruitment and mortality.
The analysis part of this paper consists in demonstrating that these two classes possess classical properties: 
in the absence of selection, they evolve in compliance with Hardy-Weinberg law; while in presence of selection and in the dominant or codominant cases, they globally converge towards the disappearance of all genotypes except the fittest homozygous one.

The paper is organised as follows.
The modelling framework is presented in Section \ref{se2}, departing from a general two life phase model.
A single locus trait heredity function that formalises Mendel's first law
is described in Section \ref{se31}.
In Sections \ref{se32} and \ref{se33}, slow manifold theory is used to deduce two classes of models describing the evolution of three genotypes of a population having inheritable attributes and density-dependent recruitment and mortality rates.
These two classes correspond respectively to the limit of fast and slow reproductive phase.
We extend in Section \ref{se4} the latter to two general classes of models, namely \eqref{SM0} ({\em Fast}) and \eqref{LM0} ({\em Slow}), which are studied afterwards.
The assumptions necessary to this study are given in Section \ref{se41}, and useful technical results are put in Section \ref{se42} (their proofs are in Appendix \ref{se10}).
Well-posedness of these models and other qualitative results are considered in Section \ref{se5}.

The asymptotic behaviour is then studied. 
The case where no fitness difference exists between the different genotypes is considered in Section \ref{se52}.
In this case, Hardy-Weinberg law is shown to hold for the considered classes of models (Theorem \ref{theo0}).
In Section \ref{se53} is studied the case of dominant and codominant selection regimes.
Asymptotic convergence to the homozygous with higher fitness is demonstrated in such conditions for models \eqref{SM0} and \eqref{LM0} (Theorem \ref{theoI}).
Last, concluding remarks are made in Section \ref{se7}.

\section{Modelling}
\label{se2}
 The aim of the present Section is to present the approach leading to the models.
 The latter are introduced in Section \ref{se4} and studied afterwards.
We begin by listing some of the main constitutive hypotheses made to obtain these models of life history for a diploid population obeying the Mendel's laws of inheritance and submitted to selection.
 \begin{itemize}
 \item
 No distinction is made between male and female individuals.
 In particular the mortality is considered identical for males and females.
 It is possible to consider this abstraction when the sex ratio is constant and fitness is independent of sex.  
 
 \item
 The mortality rates in each life phase are increasing functions of the population density.

\item
The differences in the inheritable attributes that modify the behaviour with respect to reproduction and mortality express themselves in the genotype in a single locus.
  
 \item
 There exist two different types of alleles in this locus, namely $a$ and $A$.
 \end{itemize}
 
As a consequence of the choice of two alleles in a single locus of a diploid population, we have three different genotypes, namely:
\begin{equation}
\{A,a\}\times\{A,a\}=\{(A,A),(A,a),(a,a)\}
\end{equation}
as no difference exists between the genotypes $(A,a)$ and $(a,A)$.
In the sequel, the index $i=1,2,3$ will be used to identify the {\em genotypes}, while generally speaking the index $j=A,a$ will refer to the {\em alleles}.

As said before, we are interested in the representation of a population with two life phases.
This distinction is quite simplistic, but covers broad distinctions like aquatic (subject to food and space limitation) and aerial phases, and pre-adult and adult phases.
As a starting point to represent such life history, we begin with the following compartmental models:
\begin{subequations}
	\label{c0}
	\begin{align}
	\label{c0a}
	\dot{L}_{i}  &= \omega_i \alpha_{i}(A(t))-\mu_{i}(v\t L(t))L_{i}(t)-\nu_{i}L_{i}(t)\\
	\label{c0b}
	\dot{A}_{i}  &=  \nu_{i}L_{i}(t)-\hat{\mu}_{i}(w\t A(t))A_{i}(t),  
	\end{align}
\end{subequations}
for $i=1,2,3$.
The quantities $L_{i}$  ({\em Larvae}) represent the number of individuals of the genotypes 
$(i=1,2,3)$, in pre-adult phase, and $A_{ i}$  ({\em Adults}) the corresponding number of adult individuals.
The parameters involved in \eqref{c0} have the following meaning.

\begin{itemize}
\item
The mortality functions $\mu_{i}$ and $\hat{\mu}_{i}$ depend upon the density, through the use by the population of certain recourse (typically food or space).
We assume that these mortality rates are all {\em increasing functions of the density}.
On the other hand, we wish to allow each genotype to have its proper consumption needs.
This is done by introducing as argument of the functions $\mu_{i}$ and $\hat{\mu}_{i}$ {\em weighted sums} $v\t L$ and $w\t A$, for given {\em positive vectors} $v,w$.

\item The positive constants $\omega_i$ represent fertility rates, while the functions $\alpha_i$ account for the mechanism of Mendelian inheritance, presented in detail in Section \ref{se31}.
As seen therein, the latter are normalised by the following relation:
\begin{equation}
\label{ee3}
\forall A\in\Rset_+^3,\qquad \alpha_1(A)+\alpha_2(A)+\alpha_3(A)=A_1+A_2+A_3\ .
\end{equation}
In other words, when the fertility rates $\omega_i$ are all equal to 1, then the total number of offsprings (of all genotypes) hatched by time unit equals the total number of adults (of all genotypes).

\item
Last, the $\nu_{i}$ describe constant maturation rates from pre-adult to adult phase.

\end{itemize}

We may normalise system \eqref{c0} in order to absorb the constants $\omega_i$.
Defining the new variables 
	$\displaystyle\hL_i=\frac{L_i}{\omega_i}$, $i=1,2,3$,
one obtains a rescaled version of \eqref{c0}:
\begin{subequations}
	\label{d0}
	\begin{align}
	\label{d0a}
	\dot{\hL}_{i}  &= \alpha_{i}(A(t))-\mu_{i}(\hv\t \hL(t))\hL_{i}(t)-\nu_{i}\hL_{i}(t)\\
	\label{d0b}
	\dot{A}_{i}  &=  \hnu_{i}\hL_{i}(t)-\hat{\mu}_{i}(w\t A(t))A_{i}(t)\ ,
	\end{align}
\end{subequations}
where $\hnu_i:=\omega_i\nu_{i}$ and $\hv := \diag\{\omega_i\} v$.

The life history of organisms is quite different from one to the other.
Some mature early and reproduce quickly, while others mature late and reproduce slowly. An extreme example is the arachnids {\em Adactylidium sp.}, which are born mature and, having hatched inside their mother, mate with their brothers~\cite{elbadry1966life,gould2010panda}.
The insects {\em Ephemeroptera} constitute another extreme case: the life of an adult Mayfly is very short and has essentially the primary function of reproduction~\cite{welch1998shortest}.
We consider in the sequel the cases where one of the life phases is sensibly faster than the other one, in other words that a fast dynamics and a slow dynamics are present in \eqref{d0}.
Depending on which of the phases is faster, this assumption yields through singular perturbation \cite{Robert-Jr:1991aa} two different classes of models.
We show in Sections \ref{se32} (fast reproductive phase) and \ref{se33} (slow reproductive phase) how these two classes are obtained.

\subsection{Single locus trait inheritance} 
\label{se31}

We present here the heredity functions $\alpha_i$ used to model Mendelian inheritance.
As mentioned before, no distinction is made between male and female individuals, and we  
consider a general diploid population composed of three genotypes (in agreement with the choice of two alleles in a single locus).
The vector $x$ typically represents the adult population in equation \eqref{d0}.
We denote in the sequel $x_1(t)$ (resp.\ $x_2(t)$, resp.\ $x_3(t)$) the number of individuals of this population with genotype $(A,A)$ (resp.\ $(A,a)$, resp.\ $(a,a)$) at time $t$ in the population.
Notice that the two homozygous genotypes are represented by $x_1$ and $x_3$, while $x_2$ represents the heterozygous  genotype.

Assuming random mating, the probabilities of occurrence of all possible crosses for the two alleles in the diploid population are obtained from the following {\em Punnett Square} \cite{edward2012219}:
\begin{center}
	\begin{tabular}{c|ccc}
		$ M\backslash F $ & $ x_1 $  & $ x_2 $ & $ x_3 $  \\ 
		\hline 
		$ x_1 $ & $ 100 \% x_1 $ & $ 50 \%\ x_1, \ 50 \%\ x_2  $ & $ 100\%\ x_2 $ \\ 
		$ x_2 $ & $ 50 \%\ x_1, \ 50 \%\ x_2  $ & $ 25 \%\ x_1, \ 50 \%\ x_2, \ 25\%\ x_3 $ &  $ 50 \%\ x_2, \ 50 \%\ x_3  $ \\ 
		$ x_3 $ & $ 100 \%\ x_2 $ & $ 50 \%\ x_2, \ 50 \%\ x_3  $ & $ 100\%\ x_3 $ \\
		\hline 
	\end{tabular}   
\end{center}
The latter materialises the inheritance mechanisms expressed by the first and second Mendel's Law, namely the Law of Segregation of genes and the Law of Independent Assortment.
Note that the Punnett Square may be adapted to model other inheritance mechanisms.

Defining the vectors:
\begin{equation} 
\label{vec}
u_A:=\begin{pmatrix}
1 \\ \frac{1}{2} \\ 0
\end{pmatrix},\quad
u_a:=\begin{pmatrix}
0 \\ \frac{1}{2} \\ 1
\end{pmatrix},\quad
\bfo:=\begin{pmatrix}
1 \\ 1\\ 1
\end{pmatrix}=u_A+u_a
\end{equation}
the {\em relative frequency} of each genotype in the offspring are given respectively as
\[
\frac{(u_A\t x)^2}{(\bfo\t x)^2},\qquad
2\frac{(u_A\t x)(u_a\t x)}{(\bfo\t x)^2},\qquad \frac{(u_a\t x)^2}{(\bfo\t x)^2}\ .
\]
If the total number of offsprings appearing per time unit is equal to the total adult population (as in formula \eqref{ee3}), their  genotypic repartition is therefore given by:
\begin{equation}
\label{alpha}
\alpha_1(x):=\frac{(u_A\t x)^{2}}{\bfo\t x},\qquad
\alpha_2(x):=2\frac{(u_A\t x)(u_a\t x)}{\bfo\t x},\qquad
\alpha_3(x):=\frac{(u_a\t x)^{2}}{\bfo\t x}\ .
\end{equation}
Notice that the functions $\alpha_i$ are homogeneous of degree 1.
Defining the {\em inheritance matrices} $G_i$, $i=1,2,3$, by
\begin{equation} 
	\label{G}
	G_1 =u_Au_A\t
	=\begin{pmatrix}
	1 & 1/2 & 0\\ 1/2 & 1/4 & 0\\ 0 & 0 & 0
	\end{pmatrix},\
	G_2 =u_Au_a\t+u_au_A\t
	=\begin{pmatrix}
	0 & 1/2 & 1\\ 1/2 & 1/2 & 1/2 \\ 1 & 1/2 & 0
	\end{pmatrix},\
	G_3 =u_au_a\t
	=\begin{pmatrix}
	0 & 0 & 0\\ 0 & 1/4 & 1/2 \\ 0 &1/2 & 1
	\end{pmatrix}
	\end{equation}
one may write equivalently
\begin{equation}
\label{mof}
\alpha_i(x):=\frac{1}{\bfo\t x}x\t G_ix, \qquad i=1,2,3\ .
\end{equation}

We will also use the operator $\Aset\ :\ \Rset_+^{3} \setminus \{0_3\} \to \Rset_+^{3} \setminus \{0_3\}$ (where by definition  $0_3$ is the zero vector in $\Rset^3$) to model the mechanism of Mendelian reproduction, letting for any $x \in \Rset_+^{3} \setminus \{0_3\}$,
\begin{equation}
\label{/A(x)}
\Aset(x):= \begin{pmatrix}
\alpha_{1}(x)\\ \alpha_{2}(x) \\  \alpha_{3}(x)
\end{pmatrix}\ .
\end{equation}
Notice that a quite similar setting was introduced by Langemann {\em et al.}~\cite{langemann2013multi}, to model the evolution of herbicide resistance, in the case of single and multiple gene loci.

\subsection{Fast reproductive phase population dynamics}
\label{se32}

We consider here the case of an organism 
with a mature phase shorter than the immature phase.
As said before, the mortality rates are increasing, and this is specially true for $\hat\mu_i$ in \eqref{d0b}.
Therefore, for any {\em fixed} $\hL$, equation \eqref{d0b} possesses a unique, globally asymptotically stable, equilibrium $A(\hL)$, given implicitly by
\begin{equation}
\label{LMb}
0  =  \hnu_{i}\hL_{i}-\hat{\mu}_{i}(w\t A)A_{i},\qquad i=1,2,3\ .
\end{equation}
Equation \eqref{LMb} yields the following algebraic relationship between $L_{i}$ and $A_{i}$:
\begin{equation}
A_{i}=\frac{\hnu_{i}}{\hat{\mu}_{i}(w\t A)}\hL_{i},\qquad i=1,2,3
\end{equation}
which necessarily implies that  $ b:=w\t A $ fulfils the identity
\begin{equation}
\label{a}
b= \sum_{i=1}^{3} \frac{w_i\hnu_{i}}{\hat{\mu}_{i}(b)}\hL_{ i}
\end{equation}

Now, the mortality $\hat{\mu}_i$ is increasing with the total population, so for all $\hL \in \Rset_+^3\setminus\{0\} $, the map
\begin{equation}
b \mapsto  \sum_{i=1}^{3} \frac{w_i\hnu_{i}}{\hat{\mu}_{i}(b)}\hL_{i}
\end{equation}
is decreasing and may be inverted, providing a unique solution, denoted $b^*(\hL)$, to equation \eqref{a}.
For any given nonnegative vector $\hL$, the unique solution of \eqref{LMb} is then given as
\begin{equation}
A_i = \frac{\hnu_{i}}{\hat{\mu}_{i}(b^*(\hL))}\hL_{i},\qquad i=1,2,3\ .
\end{equation}

In the limiting case where the duration of the mature phase is sensibly faster than the immature one, one may approximate equation \eqref{d0} through slow manifold theory \cite{Robert-Jr:1991aa}.
The asymptotic evolution is then expressed by the algebro-differential system
\begin{equation}
	\label{LM}
	\dot{\hL}_{i}  = \alpha_{i}(A)-\mu_{i}(\hv\t \hL)\hL_{i}-\nu_{i}\hL_{i},\qquad
A_i = \frac{\hnu_{i}}{\hat{\mu}_{i}(b^*(\hL))}\hL_{i},\qquad i=1,2,3\ .
\end{equation}
Defining for any nonnegative scalar $b$, $\displaystyle m_{i}(b):=\frac{\hnu_{i}}{\hat{\mu}_{i}(b)}$, one then ends up with the system
\begin{subequations}
	\label{LWM0}
\begin{equation}
	\label{LWM0a}
	\dot{\hL}_{i}=\alpha_{i}\left(\diag\{m_{i}(b^*(\hL))\} \hL\right) -(\nu_{i} + \mu_{i}(\hv\t \hL))\hL_{i},  \qquad i=1,2,3
\end{equation}
where, for any $L\in\Rset_+^3$, $b^*(L)$ is defined implicitly by the identity:
\begin{equation}
	\label{LWM0b}
b^*(L) = \sum_{i=1}^3 w_i m_i(b^*(L)) L_i\ .
\end{equation}
\end{subequations}
In \eqref{LWM0a}, $\diag\{m_{i}(b^*(\hL))\}$ denotes the {\em diagonal} matrix formed with the scalar coefficients $m_{i}(b^*(\hL))$, $i=1,2,3$.
Notice that by construction the functions $m_i$ present in \eqref{LWM0} are {\em decreasing}.

\subsection{Slow reproductive phase population dynamics}
\label{se33}

Consider now the opposite situation, of an adult phase comparatively much longer than the pre-adult one.
In this case, the same argument than before leads similarly to consider instead of system \eqref{d0} the algebro-differential model:
\begin{equation}
	\label{A}
	0  =  \alpha_{i}(A)-\mu_{i}(\hv\t \hL)\hL_{ i}-\nu_{i}\hL_{i},\qquad
	\dot{A}_{i} =  \hnu_{i}\hL_{i}-\hat{\mu}_{i}(w\t A)A_{i},\qquad i=1,2,3
\end{equation}

The algebraic relationship in equation \eqref{A} provides the identities
\begin{equation}
\label{nu}
\hL_{i}=\frac{1}{\nu_{i}+\mu_{i}(\hv\t \hL)}\alpha_{i}(A),\qquad i=1,2,3
\end{equation}
and thus $ b:=\hv\t \hL $ necessarily fulfils the condition:
\begin{equation}
\label{b}
b=\sum_{i=1}^{3} \frac{\hv_i}{\nu_{i}+\mu_{i}(b)}\alpha_{i}(A)
\end{equation}

From the increasingness of the mortality functions, one deduces as before that, for all $ A \in \Rset_+^3\setminus\{0\} $, the right-hand side of \eqref{b} is decreasing, and this relation may thus be inverted. 
This operation yields a unique solution to equation \eqref{b}, which is denoted $b^*(\Aset(A))$ (using $\Aset$ defined in  \eqref{/A(x)}).
With this, for any nonnegative $A$, \eqref{nu} has a unique solution $\hat L(A)$, which writes
\begin{equation}
\hL_{i}=\frac{1}{\nu_{i}+ \mu_{i}(b^*(\Aset(A)))}\alpha_{i}(A), i=1,2,3\ .
\end{equation}
Defining here for any nonnegative scalar $b$, (decreasing) functions $\displaystyle m_{i}(b):=\frac{\hnu_i}{\nu_{ i}+\mu_{i}(b)}$, one obtains finally the model:
\begin{subequations}
	\label{A0}
	\begin{equation}
	\label{A0a}
	\dot{A_{i}}=\alpha_{i}(A)m_{ i}(b^*(\Aset(A)))-\hat{\mu}_{ i}(w\t A)A_{i},  \quad i=1,2,3
	\end{equation}
where, for any $A\in\Rset_+^3$, $b^*(A)$ is defined implicitly by
\begin{equation}
	\label{A0b}
b^*(A) = \sum_{i=1}^3 \frac{\hat v_i}{\hat\nu_i} m_i(b^*(A)) A_i\ .
\end{equation}
\end{subequations}
	
Equation \eqref{A0} is quite similar to, but different from, equation \eqref{LWM0} obtained in the case of a short reproductive phase.

As can be seen from the derivations of \eqref{LWM0} and \eqref{A0}, monotonicity assumptions on the mortality rates are necessary, in order to obtain the perturbed systems.
For this reason, we postpone the formal statement of the models studied in the sequel to Section \ref{se4}, where all the assumptions are introduced.

\section{Preliminaries}
\label{se4}

\subsection{Assumptions}
\label{se41}

We now introduce the two general classes of equations studied in this paper.
The latter extend in particular the systems \eqref{LWM0} and \eqref{A0} obtained previously as models for fast and slow reproductive phase populations.
The notations are inspired from these preliminary examples and generalised.

We introduce first functions $m_i$ and $\mu_i$ to model {\em recruitment} and {\em removal} rates, and $v, w$ two vectors permitting to define their arguments.
The following series of assumptions will be made on these objects.

\begin{assumption}
	\label{ass0}
	For any $i=1,2,3$, the functions $m_i:\Rset_+\rightarrow\Rset_+$ are locally Lipschitz decreasing functions; the functions  $\mu_i:\Rset_+\rightarrow\Rset_+$ are locally Lipschitz increasing functions.
	The vectors $v,w\in\Rset^3$ have positive components.
\end{assumption}
Assumption \ref{ass0} assumes that the recruitment rates are decreasing functions, while the removal rates are increasing functions.
As a central consequence, this allows to give sense to the function $b^*$ that naturally appeared in Sections \ref{se32} and \ref{se33}, as shown by the following result, whose proof is in Appendix \ref{app1}.
The values $v_i$ that appear in the statement are the components of the vector $v$ introduced in Assumption \ref{ass0}.

\begin{lemm} 
\label{a0}
	Let Assumption {\rm\ref{ass0}} holds.
	Then, for any $x \in \mathbb{R}_{+}^{3}$, there exists a unique solution  $b^*(x)\in\Rset_+$, to the scalar equation 
	\begin{equation} 
	\label{a*}
	b=\sum_{i=1}^{3} v_im_i(b)x_i.
	\end{equation}
	Moreover, $b^*(x)>0 $ if  $ x \in \mathbb{R}_{+}^{3}\setminus \{0_3\} $, $b^*(0_3)=0 $ and $b^*: \mathbb{R}_{+}^{3}\rightarrow\mathbb{R}_+  $ has the same regularity than the functions $m_i$. 
\end{lemm}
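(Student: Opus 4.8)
The plan is to analyze the scalar function $F_x(b) := b - \sum_{i=1}^3 v_i m_i(b) x_i$ on $\Rset_+$ and show it has exactly one zero. First I would fix $x \in \Rset_+^3$ and define $g_x(b) := \sum_{i=1}^3 v_i m_i(b) x_i$. By Assumption \ref{ass0}, each $m_i$ is nonnegative, decreasing and locally Lipschitz, and each $v_i > 0$, $x_i \geq 0$; hence $g_x$ is nonnegative, nonincreasing (strictly decreasing whenever $x \neq 0_3$, since then at least one term $v_i m_i(b) x_i$ is strictly decreasing) and locally Lipschitz. Consequently $F_x(b) = b - g_x(b)$ is strictly increasing (sum of the strictly increasing $b$ and the nonincreasing $-g_x$), continuous, with $F_x(0) = -g_x(0) \leq 0$. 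Since $g_x$ is nonincreasing we have $g_x(b) \leq g_x(0)$ for all $b \geq 0$, so $F_x(b) \geq b - g_x(0) \to +\infty$ as $b \to +\infty$; in fact $F_x(g_x(0)) \geq 0$. By the intermediate value theorem and strict monotonicity, $F_x$ has a unique zero $b^*(x) \in [0, g_x(0)]$, which is the desired unique solution of \eqref{a*}.

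**The boundary cases.** For $x = 0_3$ we have $g_x \equiv 0$, so $F_x(b) = b$ and $b^*(0_3) = 0$. For $x \in \Rset_+^3 \setminus \{0_3\}$, suppose $b^*(x) = 0$; then $0 = g_x(0) = \sum_i v_i m_i(0) x_i$, which forces $m_i(0) x_i = 0$ for every $i$. But $m_i$ is decreasing and nonnegative, so $m_i(0) \geq m_i(b) \geq 0$ for all $b$; if $m_i(0) = 0$ then $m_i \equiv 0$ and the $i$-th term contributes nothing for any value of $b$. Here I would need to be a little careful: the claim $b^*(x) > 0$ genuinely requires that the $m_i$ are not identically zero — or more precisely, that $\sum_i v_i m_i(0) x_i > 0$. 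This should be read as an implicit (and biologically natural) part of Assumption \ref{ass0}, perhaps strengthened to "$m_i > 0$ on a neighborhood of $0$" or "$m_i(0) > 0$"; with that, $g_x(0) > 0$ whenever $x \neq 0_3$, hence $F_x(0) < 0$ and $b^*(x) > 0$. I would flag this point rather than gloss over it.

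**Regularity.** For the final claim, that $b^*$ inherits the regularity of the $m_i$ (local Lipschitz, or smoother), I would invoke the implicit function theorem on $\Phi(x, b) := b - \sum_i v_i m_i(b) x_i$. The obstruction is that $\Phi$ is only locally Lipschitz in $b$, so the classical $C^1$ implicit function theorem does not apply directly. Instead I would argue directly: fix $x_0$ with $b_0 = b^*(x_0)$, and use strict monotonicity to get a quantitative bound — for $b$ near $b_0$ and $x$ near $x_0$ one shows $\Phi(x,b) - \Phi(x,b') \geq c (b - b')$ for $b > b'$ with a uniform $c > 0$ coming from the strictly increasing term $b$ (indeed $c = 1$ suffices since $-g_x$ is nonincreasing: $\Phi(x,b) - \Phi(x,b') = (b-b') - (g_x(b) - g_x(b')) \geq b - b'$). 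Combined with local Lipschitz continuity of $x \mapsto \Phi(x,b)$ (linear in $x$, with coefficients $v_i m_i(b)$ locally bounded in $b$), this yields $|b^*(x) - b^*(x')| \leq L|x - x'|$ locally, i.e. local Lipschitz continuity of $b^*$. For higher regularity ($C^k$), one restricts to a region where the relevant $m_i$ are nonzero so that $\partial_b \Phi = 1 - \sum_i v_i m_i'(b) x_i \geq 1 > 0$ (using $m_i' \leq 0$), and applies the standard implicit function theorem. The main obstacle is thus purely the low regularity of the $m_i$, handled by the elementary monotonicity estimate above rather than by a black-box theorem.
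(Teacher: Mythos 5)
Your proof is correct and follows essentially the same route as the paper's: the left-hand side of \eqref{a*} is increasing from $0$ to $+\infty$ in $b$ while the right-hand side is nonincreasing (and null iff $x=0_3$), giving a unique crossing, with regularity of $b^*$ obtained via the implicit function theorem. You are in fact somewhat more careful than the paper on two points --- the positivity $b^*(x)>0$ (which is automatic once one reads ``decreasing'' in Assumption \ref{ass0} strictly, since a strictly decreasing map into $\Rset_+$ must satisfy $m_i(0)>0$) and the locally Lipschitz case, where your direct monotonicity estimate $\Phi(x,b)-\Phi(x,b')\geq b-b'$ correctly replaces the classical implicit function theorem that the paper invokes without comment.
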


\comment{Also, one can verify that for any $x \in \Rset_+^{3}$ exist $ \underline{a}, \bar{a} : \Rset_+ \rightarrow \Rset_+ $ such that: 
\begin{equation}
\forall x \in \Rset_{+}^3 \qquad \underline{a}(\bfo\t x) \leq b^*(x) \leq \bar{a}(\bfo\t x)
\end{equation} Indeed, from \eqref{a*} one has 
\begin{equation}
\forall x \in \Rset_{+}^3 \qquad \min_{i} \{ m_{i}(b^*(x))\}\bfo\t x \leq b^*(x) \leq \max_{i} \{ m_{i}(b^*(x))\}\bfo\t x
\end{equation} one achieved with implicitly define $ \underline{a}$ and $\bar{a} $ for:
\begin{align}
\label{aa}
\underline{a}(\bfo\t x) & :=  \min_{i} \{ m_{i}(b^*(x))\} \bfo\t x\\
\bar{a}(\bfo\t x)      & :=  \max_{i} \{ m_{i}(b^*(x))\} \bfo\t x
\end{align}}
Notice that the map $b^*$ depends upon the vector $v$.\\

With this done, we are finally in position to introduce the two classes of models studied in this paper.
For any $x \in \mathbb{R}_+^{3}$ and $z \in \mathbb{R}_{+}$, define the {\em positive diagonal matrices} $M(x)$ and  $\mu(z)$ by: 
\begin{equation}
	\label{M,mu}
	M(x)  = \diag\{m_i(b^*(x))\},\qquad
	\mu(z) = \diag\{\mu_{i}(z)\}.
\end{equation}
The two classes of models we will be interested in are expressed as follows:
{\renewcommand{\theequation}{\bf F} 
	\begin{equation}
	\label{SM0}
	\dot{x} = \Aset(M(x)x)-\mu(w\t x)x\ ,
	\end{equation}}
\vspace{-.4cm}
{\renewcommand{\theequation}{\bf S}
	\begin{equation}
	\label{LM0}
	\dot{x} = M(\Aset(x))\Aset(x)-\mu(w\t x)x\ .
	\end{equation}}
Recall that the map $\Aset$ is defined in \eqref{alpha}-\eqref{/A(x)}.
In developed form, these equations write respectively
{\renewcommand{\theequation}{\bf F.}
	\begin{subequations}
		\begin{align}
		\label{Sa}
		\dot x_1 & =  \frac{(u_A\t M(x)x)^2}{\bfo\t M(x)x} - \mu_1(w\t x)x_1\\
		\label{Sb}
		\dot x_2 & =  2\frac{(u_A\t M(x)x)(u_a\t M(x)x)}{\bfo\t M(x)x} - \mu_2(w\t x)x_2\\
		\label{Sc}
		\dot x_3 & =  \frac{(u_a\t M(x)x)^2}{\bfo\t M(x)x} - \mu_3(w\t x)x_3
		\end{align}
\end{subequations}}
and
{\renewcommand{\theequation}{\bf S.}
	\begin{subequations}
		\begin{align}
		\label{La}
		\dot x_1 & =\frac{(u_A\t x)^2}{\bfo\t x} m_1(b^*(\Aset(x)))  - \mu_1(w\t x)x_1\\
		\label{Lb}
		\dot x_2 & = 2\frac{(u_A\t x)(u_a\t x)}{\bfo\t x}m_2(b^*(\Aset(x)))  - \mu_2(w\t x)x_2\\
		\label{Lc}
		\dot x_3 & = \frac{(u_a\t x)^2}{\bfo\t x}m_3(b^*(\Aset(x)))  - \mu_3(w\t x)x_3\ .
		\end{align}
\end{subequations}}
Manifestly, equations \eqref{SM0} and \eqref{LM0} contain as particular cases the equations \eqref{LWM0} and \eqref{A0}, obtained by applying singular perturbation to the 
normalized system \eqref{d0}.
One shows easily that the system \eqref{LWM0} that emerged in the case of fast reproductive phase (Section \ref{se32}) is system \eqref{SM0} with the recruitment rates $m_i$ and the mortality rates $\mu_i$
\addtocounter{equation}{-4}
\begin{equation}
\frac{\omega_i\nu_i}{\hat\mu_i(\cdot)}\qquad\text{ and }\qquad \nu_i+\mu_i(\cdot)\ .
\end{equation}
 The variable $x$ represents the pre-adult populations in the original model.
On the other hand,
system \eqref{A0} appeared in the slow reproductive phase (Section \ref{se33}) corresponds to \eqref{LM0} with recruitment and mortality rates
\begin{equation}
\frac{\omega_i\nu_i}{\nu_i+\mu_i(\cdot)}\qquad\text{ and }\qquad \hat\mu_i(\cdot)\ ,
\end{equation}
 and $x$ now represents the adult populations.

The following supplementary assumptions will be used to study adaptation.

\begin{assumption}
	\label{ass1}
	The functions $m_i$, $\mu_i$ verify
	\begin{equation}
	\label{ee24}
	\forall x\in\Rset_+^3,\quad
	m_1(b^*(x)) \geq m_2(b^*(x)) \geq m_3(b^*(x))
	\qquad \text{ and } \qquad
	\forall z\geq 0,\quad
	\mu_1(z)\leq \mu_2(z) \leq \mu_3(z)
	\end{equation}
Moreover, for any $x\in\Rset_+^3$,
\begin{itemize}
\item
$m_1(b^*(x))-m_3(b^*(x))+\mu_3(w\t x)- \mu_1(w\t x)>0$ for system \eqref{SM0};
\item
$m_1(b^*(\Aset(x)))-m_3(b^*(\Aset(x)))+\mu_3(w\t x)- \mu_1(w\t x)>0$ for system \eqref{LM0}.
\end{itemize}
\end{assumption}

\begin{assumption}
	\label{ass2}
	
	The following inequality holds
	\begin{equation}
	m_1(0) > \mu_1(0)
	\end{equation}
\end{assumption}

\begin{assumption}
	\label{ass3}
	For any $x\in\Rset_+^3\setminus\{0_3\}$, the following limits exist and verify 
	\begin{equation}
	0\leq \lim_{\lambda\to +\infty} m_i(\lambda x) < \lim_{z\to +\infty} \mu_i(\lambda) \leq +\infty, \qquad i=1,2,3.
	\end{equation}
\end{assumption}
 
Assumption \ref{ass1} imposes a relative ordering of the fitnesses of the three genotypes, associated with corresponding increasing mortality rates and decreasing recruitment rates.
Two distinct important situations are covered: the cases of {\em codominance} or {\em incomplete dominance}, when the fitness of the two  homozygotes and of the heterozygote are strictly ordered according to \eqref{ee24} (e.g.\ $m_1>m_2>m_3$ and $\mu_1<\mu_2<\mu_3$); and the case of {\em dominance}, where the heterozygote has the same fitness than one of the two homozygotes (e.g.\ $m_1=m_2=m_3$, and $\mu_1<\mu_2=\mu_3$ or $\mu_1=\mu_2<\mu_3$; or $\mu_1=\mu_2=\mu_3$, and $m_1>m_2=m_3$ or $m_1=m_2>m_3$).
Notice that the {\em selectively neutral} case, where the fitness of all genotypes is equal, is excluded by the positivity requirement contained in Assumption \ref{ass1}.

Assumption \ref{ass2} ensures that (at least) genotype 1, with higher fitness, is viable.
Last, Assumption \ref{ass3} indicates that in any sufficiently large population, the mortality rates are 
greater than the recruitment rates for each genotype, and this overpopulation effect will limit the population growth.

\subsection{Technical lemmas}
\label{se42}

Before proceeding to the analysis results in Section \ref{se5}, we state now two technical lemmas useful in the sequel.
By definition, $e_i$, $i=1,2,3$, represent the vectors of the canonical basis in $\Rset^3$.

\begin{lemm} 
\label{ppA}
	For any $ x \in \mathbb{R}_{+}^{3}\setminus \{0_3\}$ and any $ \lambda > 0$, the following properties are fulfilled 
	\begin{enumerate}[i.]
		\item $ u_j\t \Aset(x)= u_j\t x, \ j=A,a $; \label{ppi}
		\item $ \bfo\t \Aset(x)= \bfo\t x $;\label{ppii}
		\item $ \Aset(\lambda x)= \lambda \Aset(x) $;\label{ppiii}
		\item $ \Aset(e_i)=e_i$, $i=1,3$.
		\label{ppiv}		
	\end{enumerate}
\end{lemm}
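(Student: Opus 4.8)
The plan is to trade the three functions $\alpha_i$ for the two scalar quantities $p:=u_A\t x$ and $q:=u_a\t x$. Since $\bfo=u_A+u_a$, we have $\bfo\t x=p+q$, and because $x\in\Rset_+^3\setminus\{0_3\}$ has nonnegative, not-all-zero components, $p+q=\bfo\t x>0$, so every denominator in the definition of $\Aset$ is legitimate. In this notation $\Aset(x)=\frac{1}{p+q}\,(p^2,\ 2pq,\ q^2)\t$, and each of the four items reduces to a one-line identity.

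For item \ref{ppi}, I would compute $u_A\t\Aset(x)=\frac{1}{p+q}\bigl(p^2+\tfrac12\cdot 2pq\bigr)=\frac{p(p+q)}{p+q}=p=u_A\t x$, using only $p^2+pq=p(p+q)$; the mirror computation gives $u_a\t\Aset(x)=\frac{1}{p+q}\bigl(\tfrac12\cdot 2pq+q^2\bigr)=\frac{q(p+q)}{p+q}=q=u_a\t x$. Item \ref{ppii} then follows by adding these two equalities (as $\bfo\t y=u_A\t y+u_a\t y$ for every $y$), or directly from the binomial identity $p^2+2pq+q^2=(p+q)^2$, which yields $\bfo\t\Aset(x)=\frac{(p+q)^2}{p+q}=p+q=\bfo\t x$. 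For item \ref{ppiii}, I would simply note that replacing $x$ by $\lambda x$ with $\lambda>0$ replaces $p,q$ by $\lambda p,\lambda q$, so each $\alpha_i$ acquires a factor $\lambda^2/\lambda=\lambda$; this is just the degree-$1$ homogeneity of the $\alpha_i$ already observed after \eqref{alpha}. For item \ref{ppiv}, I would substitute $x=e_1$, so that $p=1,\ q=0,\ \bfo\t x=1$ and hence $\Aset(e_1)=(1,0,0)\t=e_1$, and $x=e_3$, so that $p=0,\ q=1$ and hence $\Aset(e_3)=(0,0,1)\t=e_3$. The same substitution with $e_2$ gives $p=q=\tfrac12$ and $\Aset(e_2)=(\tfrac14,\tfrac12,\tfrac14)\t\neq e_2$, which explains why the claim is restricted to $i=1,3$.

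There is no real obstacle here: the statement is a direct computation, and the single point deserving a word of care is the strict positivity of $\bfo\t x$ for $x\in\Rset_+^3\setminus\{0_3\}$, which is exactly what makes $\Aset(x)$ well defined and is already part of the domain specified in \eqref{/A(x)}. If a more structural presentation were preferred, one could instead start from the matrix form \eqref{mof} and use the identity $u_Au_A\t+\tfrac12\bigl(u_Au_a\t+u_au_A\t\bigr)=\tfrac12\bigl(u_A\bfo\t+\bfo u_A\t\bigr)$ (and its $u_a$ analogue) to get $u_A\t\Aset(x)=\frac{(u_A\t x)(\bfo\t x)}{\bfo\t x}$; but the $p,q$ bookkeeping above is shorter, so I would keep that version.
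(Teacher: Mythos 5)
Your proof is correct and is essentially the paper's own argument: the computation $u_j\t\Aset(x)=\frac{1}{\bfo\t x}\bigl((u_j\t x)^2+(u_A\t x)(u_a\t x)\bigr)=u_j\t x$ in the appendix is exactly your $p,q$ calculation, and items \emph{ii}--\emph{iv} are handled identically. The only (harmless) additions are your explicit remark on why $\bfo\t x>0$ and the aside about $\Aset(e_2)$.
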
 

Recall that the map $\Aset$ is defined in \eqref{alpha}-\eqref{/A(x)}, and that the vectors $u_A, u_a$ and $\bfo$ come from \eqref{vec}.

\begin{lemm}
	\label{le0}
	Let $x \in \mathbb{R}_{+}^{3}\setminus \{0\} $.
	
	\begin{enumerate}[i.]
		\item
		The following properties are verified
		\begin{equation}
		\label{a*toinf}
		\lim_{\lambda\to +\infty} b^*(\lambda x)=+\infty, 
		\end{equation} 
		
		\begin{equation}
		\label{milessmui}
		\lim_{\lambda\to +\infty} m_i(b^*(\lambda x))< \lim_{\lambda\to +\infty} \mu_i(\lambda), \quad  i=1,2,3.
		\end{equation}
		Moreover, for both limits the convergence is uniform in the set $ \{x \in \mathbb{R}_{+}^{3}: w\t x=1 \} $.
		\label{pppi}
		\item
		The map $\lambda \mapsto b^*(\lambda x)$ is increasing on $[0,+\infty)$.
		\label{pppii}
		
	\end{enumerate}
	
\end{lemm}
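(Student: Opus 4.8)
The plan is to treat the two parts separately, starting with the monotonicity statement (part \ref{pppii}), since it is the simpler one and will be reused. Fix $x\in\Rset_+^3\setminus\{0_3\}$ and recall from Lemma \ref{a0} that $b^*(\lambda x)$ is the unique solution $b$ of $b=\sum_i v_i m_i(b)(\lambda x_i)=\lambda\sum_i v_i m_i(b) x_i$. Write $\Phi(b):=\sum_i v_i m_i(b) x_i$, which by Assumption \ref{ass0} is nonnegative, nonincreasing in $b$, and strictly positive at $b=0$ (since $x\neq 0_3$, $v>0$, and $m_i\geq 0$ with $m_i(0)>0$ forced by the decreasing-to-a-limit structure — more carefully, $m_1(0)>\mu_1(0)\geq 0$ from Assumption \ref{ass2}, and even if some $m_i(0)=0$ the sum is positive because... ). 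The defining relation is $b^*(\lambda x)=\lambda\,\Phi(b^*(\lambda x))$. If $0\leq\lambda_1<\lambda_2$, suppose for contradiction $b^*(\lambda_1 x)\geq b^*(\lambda_2 x)$; then $\Phi(b^*(\lambda_1 x))\leq\Phi(b^*(\lambda_2 x))$ by monotonicity of $\Phi$, hence $b^*(\lambda_1 x)=\lambda_1\Phi(b^*(\lambda_1 x))<\lambda_2\Phi(b^*(\lambda_2 x))=b^*(\lambda_2 x)$, a contradiction. (The strict inequality uses $\Phi(b^*(\lambda_2 x))>0$, which holds because $b^*(\lambda_2 x)>0$ and $\lambda_2>0$.) This gives strict monotonicity, hence part \ref{pppii}.

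For part \ref{pppi}, first establish $\lim_{\lambda\to+\infty}b^*(\lambda x)=+\infty$. By part \ref{pppii} the limit $L\in(0,+\infty]$ exists; suppose $L<+\infty$. Then $\Phi(b^*(\lambda x))\geq\Phi(L)$ for all $\lambda$ (since $b^*(\lambda x)\leq L$ and $\Phi$ is nonincreasing). But $\Phi(L)>0$: indeed $\sum_i v_i m_i(L) x_i$ — here one must be slightly careful, because a priori some $m_i(L)$ could vanish. However Assumption \ref{ass3} only forces $m_i$ to have a limit at $+\infty$ strictly below $\lim\mu_i$, and in particular $m_1(L)\geq\lim_{z\to+\infty}m_1(z)$, which need not be positive. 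The clean way around this: since $x\neq 0_3$ pick a coordinate $i_0$ with $x_{i_0}>0$; if $m_{i_0}(L)>0$ we are done as $\Phi(L)\geq v_{i_0}m_{i_0}(L)x_{i_0}>0$; if every $m_{i_0}(L)=0$ for all such $i_0$ then $\Phi(L)=0$, whence $b^*(\lambda x)=\lambda\Phi(b^*(\lambda x))\leq\lambda\Phi(\text{something})$ — this needs a separate short argument, probably best handled by noting $m_i$ decreasing and using a value $b_0<L$ where $\Phi(b_0)>0$ (which exists since $\Phi(0)>0$), then for $\lambda$ large $b^*(\lambda x)=\lambda\Phi(b^*(\lambda x))$; if $b^*(\lambda x)\leq b_0$ then $b^*(\lambda x)\geq\lambda\Phi(b_0)\to+\infty$, contradicting $b^*(\lambda x)\leq b_0$, so $b^*(\lambda x)>b_0$ eventually, and iterating this bootstrap (or taking $b_0\uparrow L$) forces $L=+\infty$. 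So $\lim_\lambda b^*(\lambda x)=+\infty$ as claimed. Given this, \eqref{milessmui} follows immediately from Assumption \ref{ass3}: $\lim_\lambda m_i(b^*(\lambda x))=\lim_{b\to+\infty}m_i(b)<\lim_{z\to+\infty}\mu_i(z)=\lim_\lambda\mu_i(\lambda)$, using that $m_i$ has a limit at $+\infty$ (monotone) and that $b^*(\lambda x)\to+\infty$.

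The last point — uniformity of both convergences over the compact-looking set $S:=\{x\in\Rset_+^3:\ w\t x=1\}$ — is the main obstacle, and I expect it to require the most care. The set $S$ is closed and bounded (hence compact) because $w$ has strictly positive components. The strategy is: $S$ does not contain $0_3$, so $\underline c:=\min_{x\in S}\bfo\t x>0$ by compactness and continuity, and also every $x\in S$ has $\max_i x_i\geq\underline c/3>0$. Then, using the bootstrap argument above with uniform constants — e.g. a fixed $b_0$ with $\inf_{x\in S}\Phi_x(b_0)>0$, where $\Phi_x(b):=\sum_i v_i m_i(b) x_i\geq(\min_i v_i)\,m_{?}(b)\,(\ldots)$; more robustly, bound $\Phi_x(b_0)\geq(\min_i v_i)\big(\min_i m_i(b_0)\big)(\bfo\t x)\geq(\min_i v_i)\big(\min_i m_i(b_0)\big)\underline c$ provided $\min_i m_i(b_0)>0$, which holds for $b_0$ small enough by continuity since $\min_i m_i(0)>0$ — one shows $b^*(\lambda x)\geq\lambda\cdot(\text{uniform positive constant})$ once $b^*(\lambda x)$ exceeds $b_0$, and the bootstrap shows it does so for $\lambda$ larger than a threshold independent of $x\in S$. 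Hence $b^*(\lambda x)\to+\infty$ uniformly on $S$. Uniformity in \eqref{milessmui} then follows: since $m_i$ is nonincreasing, $m_i(b^*(\lambda x))\leq m_i\big(\inf_{x\in S}b^*(\lambda x)\big)\to\lim_{b\to\infty}m_i(b)$, giving a uniform upper bound approaching the limit; and $\mu_i$ nondecreasing gives the matching lower bound for $\mu_i(\lambda)$ (note $\mu_i(\lambda)$ does not even depend on $x$). Combining, for any $\varepsilon>0$ there is $\Lambda$ with $m_i(b^*(\lambda x))<\lim_b m_i(b)+\varepsilon$ and $\mu_i(\lambda)>\lim_z\mu_i(z)-\varepsilon$ uniformly over $x\in S$, and choosing $\varepsilon$ smaller than half the gap in Assumption \ref{ass3} closes the argument. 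The delicate bookkeeping is making sure all the thresholds and constants ($b_0$, the lower bound on $\Phi_x(b_0)$, the resulting $\Lambda$) are chosen in the right order and depend only on the data and on $S$, not on the individual point $x$; that ordering of quantifiers is where a careless proof would go wrong.
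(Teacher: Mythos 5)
Your proposal is correct and follows essentially the same route as the paper: both parts rest on the fixed-point identity $b^*(\lambda x)=\lambda\sum_{i} v_i m_i(b^*(\lambda x))x_i$ together with the monotonicity of the $m_i$, and the uniformity on $\{x\in\Rset_+^3: w\t x=1\}$ is obtained in the paper by the same two-sided bound you set up, just written directly in terms of $w\t x$ so that uniformity is immediate rather than via compactness of the simplex. The one point you hedge on --- whether $\Phi$ could vanish at a finite argument --- disappears once one notes that a decreasing nonnegative $m_i$ is strictly positive on all of $\Rset_+$ (a fact the paper's own proofs of Lemmas \ref{a0} and \ref{le0} use implicitly), so your primary contradiction $b^*(\lambda x)\geq\lambda\Phi(L)\to+\infty$ already closes the argument and the bootstrap is unnecessary.
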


Proofs of Lemmas \ref{ppA} and \ref{le0} are given in Appendices \ref{app2} and \ref{app3}.
 Notice that the map $x\mapsto b^*(x)$ is not itself increasing.

\section{Well-posedness and qualitative results}
\label{se5}

We provide here the first results concerning the solutions of systems \eqref{SM0} and \eqref{LM0}.
At first, well-posedness of these systems of nonlinear ordinary differential equations is proved in Section \ref{se51}.
The qualitative properties that relate to the existence of {\em monomorphic} and {\em polymorphic states}, and {\em trajectories} are studied in section 4.2.
Last, we show in Section \ref{se518} that quite natural notions of {\em mean allelic recruitment rate} and {\em mean allelic mortality rate} may be introduced for each system, which will prove quite useful in establishing the forthcoming asymptotic results.

\subsection{Well-posedness of the models}
\label{se51}

Well-posedness for the models \eqref{SM0} and \eqref{LM0} is supplied by the following statement.

\begin{theo}[Well-posedness and boundedness of the solutions]
	\label{le01}
	Assume Assumption {\rm\ref{ass0}} is fulfilled.
	Then, for any nonnegative initial condition, there exists a unique solution $x$ of \eqref{SM0}, resp.\ \eqref{LM0}.
 	Its coordinates are nonnegative for any $t\geq 0$.
	
	If moreover Assumption {\rm\ref{ass3}} is fulfilled, then
	\begin{equation}
	\label{eq}
	0 \leq \liminf_{t\to +\infty} \bfo\t x(t) \leq \limsup_{t\to +\infty} \bfo\t x(t) \leq c_{\ref{SM0}}^*,\quad
	\text{ resp.\ } c_{\ref{LM0}}^*
	\end{equation}
where by definition $c_{\ref{SM0}}^*$, resp.\ $c_{\ref{LM0}}^*$, is the largest $c\geq 0$ such that
	\begin{equation}
	\label{eqq}
	\max\left\{\max_i\dfrac{m_i(b^*(cx))}{\mu_i(c)}\ : \ w\t x=1
	\right\} \leq 1,\ \text{ resp.\ }
	\max\left\{\max_i\dfrac{m_i(b^*(c\Aset(x)))}{\mu_i(c)}\ : \ w\t x=1
	\right\} \leq 1
	\end{equation}
\end{theo}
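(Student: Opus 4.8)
The plan is to establish the theorem in two parts, mirroring its statement: first existence, uniqueness and nonnegativity of solutions under Assumption~\ref{ass0} alone, then the asymptotic bound on $\bfo\t x$ under the additional Assumption~\ref{ass3}.

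\textbf{Local existence and uniqueness.} The right-hand sides of \eqref{SM0} and \eqref{LM0} are built from the maps $\Aset$, $M$, $\mu$ and the linear form $w\t x$. By Lemma~\ref{a0}, $b^*$ is locally Lipschitz on $\Rset_+^3$ (same regularity as the $m_i$), hence $x\mapsto M(x)$ and $x\mapsto M(\Aset(x))$ are locally Lipschitz on $\Rset_+^3$; $\mu(w\t x)$ is locally Lipschitz by Assumption~\ref{ass0}. The only delicate point is $\Aset$, which by \eqref{alpha} involves division by $\bfo\t x$ and is therefore a priori only defined on $\Rset_+^3\setminus\{0_3\}$, where it is locally Lipschitz (ratio of smooth functions with nonvanishing denominator). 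First I would note that $\Aset$ extends continuously to $0_3$ by $\Aset(0_3)=0_3$ (since $\alpha_i(x)=x\t G_i x/\bfo\t x$ is bounded by $\|x\|$ near the origin by homogeneity of degree~$1$, cf.\ Lemma~\ref{ppA}.\ref{ppiii}), so the vector fields of \eqref{SM0} and \eqref{LM0} are continuous on all of $\Rset_+^3$ and locally Lipschitz on $\Rset_+^3\setminus\{0_3\}$. The Cauchy--Lipschitz/Picard--Lindelöf theorem then gives a unique maximal solution from any initial condition in $\Rset_+^3\setminus\{0_3\}$; the origin is an equilibrium of both systems (all terms vanish there), which settles the case $x(0)=0_3$.

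\textbf{Forward invariance of $\Rset_+^3$ and nonnegativity.} I would check that each coordinate hyperplane $\{x_i=0\}$ is repelling or invariant in the forward direction: on $\{x_i=0\}$ one has $\dot x_i=\alpha_i(M(x)x)\geq 0$ for \eqref{SM0} (resp.\ $\dot x_i=m_i(\cdot)\alpha_i(x)\geq 0$ for \eqref{LM0}), because each $\alpha_i$ is a nonnegative quadratic form on $\Rset_+^3$ and the $m_i$ are nonnegative. A standard invariance argument (e.g.\ via the sub-tangentiality/Nagumo condition, or by perturbing to $\dot x_i\geq -\varepsilon$ and letting $\varepsilon\to0$) then shows $\Rset_+^3$ is forward invariant, so the solution stays nonnegative. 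Combined with the a priori bound below, this also prevents finite-time blow-up and yields global existence on $[0,+\infty)$.

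\textbf{The asymptotic bound --- the main obstacle.} This is the substantive part. Summing the three equations and using $\bfo\t\Aset(y)=\bfo\t y$ (Lemma~\ref{ppA}.\ref{ppii}) gives, for \eqref{SM0},
\begin{equation}
\frac{d}{dt}\big(\bfo\t x\big)=\bfo\t M(x)x-\sum_{i=1}^3\mu_i(w\t x)x_i=\sum_{i=1}^3\big(m_i(b^*(x))-\mu_i(w\t x)\big)x_i,
\end{equation}
and similarly for \eqref{LM0} with $b^*(\Aset(x))$ in place of $b^*(x)$ and the first sum rewritten using $\bfo\t M(\Aset(x))\Aset(x)=\sum_i m_i(b^*(\Aset(x)))\alpha_i(x)$. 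The strategy is to show that whenever $\bfo\t x$ is large, this derivative is negative, forcing $\limsup_{t\to\infty}\bfo\t x(t)\le c^*$. Concretely, writing $x=r\xi$ with $r=\bfo\t x>0$ and $\bfo\t\xi=1$ (or normalising instead by $w\t x=1$ to match \eqref{eqq}, using positivity of $w$ to pass between the two norms), Assumption~\ref{ass3} together with Lemma~\ref{le0}.\ref{pppi} --- in particular the \emph{uniform} convergence $m_i(b^*(\lambda x))<\mu_i(\lambda)$ on the compact slice $\{w\t x=1\}$, and $b^*(\lambda x)\to+\infty$ --- guarantees that for $r$ large enough, $m_i(b^*(x))<\mu_i(w\t x)$ for every $i$ uniformly in the direction $\xi$; hence $\frac{d}{dt}\bfo\t x<0$ outside a bounded set, which gives the ultimate bound. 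The hardest technical point will be handling the normalisation carefully near the boundary of the simplex (where some $x_i\to0$ but $\Aset(x)$ must still be controlled for system~\eqref{LM0}) and verifying that $c^*$, defined as the largest $c$ satisfying \eqref{eqq}, is well-defined (nonempty set of admissible $c$, which follows from Assumption~\ref{ass3} and compactness of $\{w\t x=1\}$ via Lemma~\ref{le0}.\ref{pppi}) --- and that the map $c\mapsto\max\{\max_i m_i(b^*(cx))/\mu_i(c):w\t x=1\}$ is nonincreasing in $c$, which uses monotonicity of $m_i$, of $\mu_i$, and Lemma~\ref{le0}.\ref{pppii} ($\lambda\mapsto b^*(\lambda x)$ increasing), so that the sublevel set $\{c:\ \cdot\le1\}$ is an interval and $c^*$ is its right endpoint. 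Once the ultimate upper bound $\limsup\bfo\t x\le c^*$ is in hand, the remaining inequalities in \eqref{eq} are immediate from nonnegativity.
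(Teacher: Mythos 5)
Your proposal is correct and follows essentially the same route as the paper's proof: Lipschitzness of the right-hand side for existence and uniqueness, the sign of $\dot x_i$ on the coordinate hyperplanes for forward invariance of $\Rset_+^3$, and the summed equation combined with Property \emph{ii} of Lemma \ref{ppA}, the monotonicity and uniform limits of Lemma \ref{le0}, and Assumption \ref{ass3} both to make $c^*$ well defined and to force $\bfo\t\dot x<0$ when $w\t x>c^*$. Your extra care about extending $\Aset$ by $\Aset(0_3)=0_3$ and treating the origin as an equilibrium is a welcome refinement of a point the paper passes over silently, but it does not change the argument.
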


\begin{proof}
\mbox{}
	
\noindent $\bullet$
The equations \eqref{SM0} and \eqref{LM0} are meaningful as soon as Assumption \ref{ass0} holds, as the latter permits to define $b^*$ (through Lemma \ref{a0}).
	The well-posedness of both systems comes from the Lipschitzness of their right-hand side.
	The fact that the trajectories do not escape the nonnegative quadrant comes from the fact that $\dot x_i(t)\geq 0$   
	whenever $x_i(t)=0$, $i=1,2,3$.
	
\noindent $\bullet$
	We first demonstrate that the definition of $c_{\ref{SM0}}^*$ as stated in the statement is meaningful.
	Indeed, for any $ x \in \Rset_+^{3} \setminus \{0_3\}$,
	due to Lemma \ref{le0} the map 
	\begin{equation}
	c \mapsto \max_{i=1,2,3}\frac{m_i(b^*(cx))}{\mu_i(c)}
	\end{equation}
 is decreasing and, due to Assumption \ref{ass3}, verifies:
	\begin{equation}
	\lim_{c\to +\infty} \max_{i=1,2,3}\frac{m_i(b^*(cx))}{\mu_i(c)}<1.
	\end{equation}
	As the convergence  of this limit is uniform in the set $ \{x \in \mathbb{R}_+^{3}: w\t x=1  \} $, the map 
	\begin{equation}
	c \mapsto \max\left\lbrace  \max_{i=1,2,3}\dfrac{m_i(b^*(cx))}{\mu_i(c)}: w\t x =1 \right\rbrace 
	\end{equation} admits values smaller than $ 1 $ for sufficiently large $ c>0  $.
	Therefore, $c_{\ref{SM0}}^*$ as given in the statement is well defined. 
	
	Slight adaptation of the same argument demonstrates that $c_{\ref{LM0}}^*$ too is well defined.
	
\noindent $\bullet$
	Summing up the three equations in \eqref{SM0}, we have 
	\begin{eqnarray}
	\label{sumdotx} 
	\bfo\t \dot{x}
	& = &
	\nonumber
	\bfo\t \Aset(M(x)x)-\bfo\t \mu(w\t x)x\\
	& = &
	\nonumber
	\bfo\t M(x)x-\bfo\t \mu(w\t x)x \qquad (\text{by Property {\it\ref{ppii}} in Lemma \ref{ppA}})\\
	& =  &
	\nonumber
	\sum_{i=1}^{3} (m_i(b^*(x))-\mu_i(w\t x))x_i\\
	& \leq  &
	\left(
	\max_i\{m_i(b^*(x))-\mu_i(w\t x)\}
	\right) \bfo\t x, \quad i=1,2,3.
	\end{eqnarray}
	 The vector $\frac{x}{w\t x}=1$ is normalised, in the sense that $w\t \frac{x}{w\t x}=1$.
	As $m_i(b^*(x))=m_i(b^*(w\t x \frac{x}{w\t x}))$, one gets by definition of $c_{\ref{SM0}}^*$ that  $\max_i\{m_i(b^*(x))-\mu_i(w\t x)\} < 0 $ whenever $w\t x > c_{\ref{SM0}}^*$.

Essentially the same argument provides the corresponding result for system \eqref{LM0}, and this proves \eqref{eq} and \eqref{eqq}, and achieves the demonstration of Theorem \ref{le01}.
\end{proof}

\subsection{Monomorphism and polymorphism}
\label{se515}

Variability is essential for the selection to operate in a population.
With this in view, we introduce some related notions.

\begin{defi}[Monomorphic, polymorphic and ``holomorphic'' states]
	\label{xej}
	A {\em nonzero} population state $x\in\Rset_+^3$ is called {\em monomorphic} if it consists of a single homozygous genotype, that is $x=ce_i$ for $i=1$ or $3$ and a certain $c>0$.
	Otherwise it is called {\em polymorphic}, and ``holomorphic'' if all the genotypes are present.
\end{defi}

We exhibit now the possible values of a homozygous equilibrium point.

\begin{lemm}[Monomorphic equilibria] 
\label{monoEqui}
		Assume Assumptions {\rm\ref{ass0}} and  {\rm\ref{ass3}} are fulfilled.
Let $i\in\{1,3\}$.
If $ m_i(0)>\mu_i(0) $, then there exists a unique positive solution $c_i^*\in \Rset_+ $ to the scalar equation 
		\begin{equation}
		\label{c_i*}
		m_i(b^*(c_ie_i))=\mu_i(c_iw_i)\ .
		\end{equation}
		If $ m_i(0)\leq\mu_i(0) $, we define $ c_i^*:=0 $. 
\end{lemm}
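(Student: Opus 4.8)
The plan is to analyze the scalar equation \eqref{c_i*} for fixed $i\in\{1,3\}$ by studying the function
\[
g_i(c):=m_i(b^*(ce_i))-\mu_i(cw_i),\qquad c\geq 0,
\]
and showing that, under the hypothesis $m_i(0)>\mu_i(0)$, it has a unique positive zero. First I would observe that $g_i$ is continuous on $\Rset_+$: indeed $c\mapsto b^*(ce_i)$ is continuous by Lemma \ref{a0} (it has the same regularity as the $m_i$, which are locally Lipschitz, hence continuous), $m_i$ is continuous, and $\mu_i$ is continuous since $w_i>0$ makes $c\mapsto cw_i$ continuous. Next I would check monotonicity: by Lemma \ref{le0}, item \ref{pppii}, the map $\lambda\mapsto b^*(\lambda e_i)$ is increasing on $[0,+\infty)$, and since $m_i$ is decreasing (Assumption \ref{ass0}) the composition $c\mapsto m_i(b^*(ce_i))$ is nonincreasing; meanwhile $c\mapsto \mu_i(cw_i)$ is nondecreasing because $\mu_i$ is increasing and $w_i>0$. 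Hence $g_i$ is nonincreasing on $\Rset_+$.

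Then I would pin down the endpoint behaviour. At $c=0$ we have $b^*(0_3)=0$ by Lemma \ref{a0}, so $g_i(0)=m_i(0)-\mu_i(0)>0$ by hypothesis. For $c\to+\infty$, I apply Lemma \ref{le0}, item \ref{pppi}: taking $x=e_i$ (which lies in $\Rset_+^3\setminus\{0\}$), \eqref{milessmui} gives $\lim_{c\to+\infty}m_i(b^*(ce_i))<\lim_{c\to+\infty}\mu_i(c)$; since $w_i>0$, $\mu_i(cw_i)$ and $\mu_i(c)$ have the same limit as $c\to+\infty$ (both tend to $\lim_{z\to+\infty}\mu_i(z)$, which may be $+\infty$, using that $\mu_i$ is monotone). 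Consequently $\limsup_{c\to+\infty}g_i(c)<0$, so in particular $g_i(c)<0$ for $c$ large enough. By the intermediate value theorem there exists at least one $c_i^*>0$ with $g_i(c_i^*)=0$.

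For uniqueness I would argue that $g_i$ is in fact strictly decreasing \emph{across} any zero: if $g_i(c_1)=g_i(c_2)=0$ with $0<c_1<c_2$, then by monotonicity $g_i$ is constant equal to $0$ on $[c_1,c_2]$, forcing $m_i(b^*(ce_i))=\mu_i(cw_i)$ for all $c\in[c_1,c_2]$; but then $\mu_i(cw_i)$ would be constant on that interval, contradicting the fact that $\mu_i$ is strictly increasing (Assumption \ref{ass0}) and $w_i>0$ makes $c\mapsto cw_i$ strictly increasing. This yields the unique positive solution $c_i^*$. Finally, when $m_i(0)\leq\mu_i(0)$ the convention $c_i^*:=0$ is simply a definition, so there is nothing to prove there. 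I expect the only mildly delicate point to be the careful handling of the $c\to+\infty$ limit when $\lim_{z\to+\infty}\mu_i(z)=+\infty$, and the reduction of $\mu_i(cw_i)$ to $\mu_i(c)$ via monotonicity of $\mu_i$; everything else is a routine continuity-plus-monotonicity argument on a scalar function.
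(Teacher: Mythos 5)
Your argument is correct and follows essentially the same route as the paper's proof: monotonicity of $c\mapsto m_i(b^*(ce_i))$ (via Lemma \ref{le0}, item \emph{\ref{pppii}}, and Assumption \ref{ass0}) against monotonicity of $c\mapsto\mu_i(cw_i)$, combined with the sign at $c=0$ and the eventual sign change guaranteed by Assumption \ref{ass3}. The paper states this in two lines and leaves the endpoint limit and the uniqueness-across-a-plateau details implicit; you have simply made them explicit.
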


\begin{proof}
	Notice that the functions defined on $\Rset^+$ by $c\mapsto m_i(b^*(ce_i))$, $i=1,2,3$, are decreasing,
	due to Assumption \ref{ass0} and property {\it\ref{pppii}} in Lemma \ref{le0}, while the functions $c\mapsto \mu_i(cw_i)$, $i=1,2,3$, are increasing.
	Therefore for $ m_i(0)>\mu_i(0)$, \eqref{c_i*} admits a unique, positive, solution.
\end{proof}

The following lemma shows that the trajectories of \eqref{SM0} and \eqref{LM0} initially originated from a monomorphic population converge to the corresponding monomorphic equilibrium.
\begin{lemm}[Monomorphic trajectories]
	\label{le1}
Assume Assumptions {\rm\ref{ass0}} and  {\rm\ref{ass3}} are fulfilled.
Then any trajectory originating from a monomorphic state, say of homozygous $i\in\{1,3\}$, stays monomorphic and converges towards the corresponding equilibrium point $ c^*_ie_i $, with $c^*_i$ given by Lemma {\rm \ref{monoEqui}}. 
\end{lemm}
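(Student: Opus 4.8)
The plan is to reduce the three-dimensional system to a scalar problem on the invariant ray $\Rset_+ e_i$ and then apply elementary scalar ODE arguments together with the boundedness already established in Theorem~\ref{le01}. First I would show invariance of the ray: if $x(0)=c_0 e_i$ with $i\in\{1,3\}$, then $x_j(t)\equiv 0$ for $j\neq i$ for all $t\geq 0$. For system \eqref{SM0}, observe that $M(x)x = m_i(b^*(x))x$ on the ray, so $M(x)x$ is again a (nonnegative) multiple of $e_i$; by property \ref{ppiv} of Lemma~\ref{ppA}, $\Aset(e_i)=e_i$, and by homogeneity (property \ref{ppiii}) $\Aset(M(x)x)$ is a multiple of $e_i$. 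Hence the right-hand side of \eqref{SM0} points along $e_i$ whenever $x\in\Rset_+ e_i$, so by uniqueness (Theorem~\ref{le01}) the trajectory remains on the ray. For system \eqref{LM0} the argument is identical, using $\Aset(x)=\Aset(c_0 e_i)=c_0 e_i$ directly on the ray.

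Next I would write down the reduced scalar equation. On the ray $x=c(t)e_i$, both \eqref{SM0} and \eqref{LM0} collapse to the same scalar ODE
\begin{equation}
\label{scalarmono}
\dot c = \big(m_i(b^*(ce_i)) - \mu_i(cw_i)\big)c\ ,
\end{equation}
since in the \eqref{SM0} case $\Aset(M(x)x)=\Aset(m_i(b^*(ce_i))ce_i)=m_i(b^*(ce_i))c\,e_i$, and in the \eqref{LM0} case $M(\Aset(x))\Aset(x)=M(ce_i)\,ce_i=m_i(b^*(ce_i))c\,e_i$, while $\mu(w\t x)x=\mu_i(cw_i)c\,e_i$. (Here I use that $b^*(\Aset(ce_i))=b^*(ce_i)$ for the \eqref{LM0} case.) Then I would analyse \eqref{scalarmono}: $c\equiv 0$ is an equilibrium, and by Lemma~\ref{monoEqui} any other equilibrium $c>0$ must satisfy $m_i(b^*(ce_i))=\mu_i(cw_i)$, which holds uniquely at $c=c_i^*$ when $m_i(0)>\mu_i(0)$ and has no positive solution otherwise (so $c_i^*=0$).

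To conclude convergence I would use monotonicity of the flow. Set $g_i(c):=m_i(b^*(ce_i))-\mu_i(cw_i)$; by Assumption~\ref{ass0} and property \ref{pppii} of Lemma~\ref{le0}, $c\mapsto m_i(b^*(ce_i))$ is decreasing and $c\mapsto\mu_i(cw_i)$ is increasing, so $g_i$ is strictly decreasing on $\Rset_+$. Thus $g_i$ changes sign exactly once, at $c_i^*$ (with $g_i>0$ on $[0,c_i^*)$ and $g_i<0$ on $(c_i^*,\infty)$) when $c_i^*>0$, and $g_i\leq 0$ everywhere when $c_i^*=0$. Since $\dot c=g_i(c)c$ and $c$ stays in the bounded interval $[0,c_{\ref{SM0}}^*]$ (resp.\ $[0,c_{\ref{LM0}}^*]$) by Theorem~\ref{le01}, the scalar trajectory is eventually monotone and bounded, hence converges to an equilibrium; the sign structure of $g_i$ forces the limit to be $c_i^*$ regardless of the (nonzero, nonnegative) initial value. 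I expect the only mildly delicate point to be the bookkeeping that $b^*(\Aset(ce_i))=b^*(ce_i)$ in the \eqref{LM0} case and that both systems genuinely reduce to the \emph{same} scalar equation \eqref{scalarmono}; everything else is routine one-dimensional phase-line analysis.
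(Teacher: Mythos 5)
Your proof is correct and follows essentially the same route as the paper's: it reduces the dynamics on the invariant ray $\Rset_+e_i$ to the scalar equation $\dot c=\big(m_i(b^*(ce_i))-\mu_i(cw_i)\big)c$ and concludes by one-dimensional phase-line analysis using the monotonicity of $c\mapsto m_i(b^*(ce_i))$ and $c\mapsto\mu_i(cw_i)$. The only difference is that you spell out the invariance and the reduction (via properties \emph{\ref{ppiii}} and \emph{\ref{ppiv}} of Lemma~\ref{ppA} and the identity $\Aset(ce_i)=ce_i$), which the paper treats as immediate.
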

\begin{proof}
	Clearly when only one allele is initially present (i.e.\ when the heterozygous genotype and one of the two homozygous ones are initially absent), this property remains true throughout time.
	The dynamics of \eqref{SM0} or \eqref{LM0} then occur in the one-dimensional space that corresponds to this homozygous genotype, and it is easily shown that the evolution obeys the law:
	\begin{equation}
	\label{dotxj}
	\dot x_i = \left(
	\vphantom{\hat A}
	m_i(b^*(x_ie_i))-\mu_i(w_ix_i)
	\right)x_i\ .
	\end{equation} 
	In the case where $c_i^*$ is such that $ m_i(b^*(c_i^*e_i))-\mu_i(w_ic_i^*)=0 $ (that is when $ i=1 $, or when $ i=3 $ and $ m_3(0)>\mu_3(0) $), then $ c^*_i$ is the unique globally asymptotically stable equilibrium of this system. 
	
	When $m_3(0)\leq\mu_3(0) $, all trajectories converge towards the globally asymptotically equilibrium $ 0=c^*_3e_3 $ (as in this case $ c^*_3=0 $).   
\end{proof}

We now consider trajectories of \eqref{SM0} and \eqref{LM0} such that $u_A\t x(0) \neq 0$ {\em and} $u_a\t x(0) \neq 0$.
The following result shows that in such cases all alleles, {\em but also all genotypes}, are present for positive times: extinction of a genotype (in the selection case) may only occur asymptotically in time.
In other words, {\em polymorphic} trajectories are also ``holomorphic'' trajectories: immediately after the initial time, they contain all genotypes.

\begin{lemm}[Polymorphic trajectories] 
	\label{le15}
	Assume Assumption {\rm\ref{ass0}} is fulfilled.
	Then for any trajectory such that $u_A\t x(0)\neq 0$ and $u_a\t x(0)\neq 0$,
	\begin{equation}
	\forall t> 0,\qquad x_i(t) > 0,\qquad i=1,2,3\ .
	\end{equation}
\end{lemm}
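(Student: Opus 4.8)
\emph{Strategy.} The idea is that nonnegativity of the solution (Theorem \ref{le01}) together with an elementary Grönwall estimate forces the allelic quantities $u_A\t x$ and $u_a\t x$ to remain strictly positive, and that the Mendelian recruitment term --- which, written out in \eqref{Sa}--\eqref{Sc} and \eqref{La}--\eqref{Lc}, is built from these quantities --- then acts as a strictly positive source for each of the three coordinates, so that $x_i(t)>0$ for $t>0$ even when $x_i(0)=0$. Since only Assumption \ref{ass0} is available, there is no a priori global existence, so I fix $T$ below the upper endpoint $T_{\max}$ of the maximal interval of existence and argue on $[0,T]$, on which $x(\cdot)$, hence $w\t x(\cdot)$ and $b^*(x(\cdot))$ (resp.\ $b^*(\Aset(x(\cdot)))$), are continuous and bounded; set $K:=\sup_{t\in[0,T]}\max_i \mu_i(w\t x(t))<\infty$.

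\emph{Step 1: the alleles stay positive.} For \eqref{SM0}, Property \ref{ppi} of Lemma \ref{ppA} gives $u_A\t\Aset(M(x)x)=u_A\t M(x)x=m_1(b^*(x))x_1+\tfrac12 m_2(b^*(x))x_2\ge 0$ (all terms nonnegative), while $u_A\t\mu(w\t x)x=\mu_1(w\t x)x_1+\tfrac12\mu_2(w\t x)x_2\le K\,u_A\t x$; for \eqref{LM0} the analogous computation gives $u_A\t M(\Aset(x))\Aset(x)=m_1(b^*(\Aset(x)))\alpha_1(x)+\tfrac12 m_2(b^*(\Aset(x)))\alpha_2(x)\ge 0$ with the same upper bound. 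In either case $\frac{d}{dt}(u_A\t x)\ge -K\,u_A\t x$ on $[0,T]$ (a standard continuity argument lets this propagate from $t=0$), so Grönwall yields $u_A\t x(t)\ge u_A\t x(0)\,e^{-Kt}$; since $u_A\t x(0)\ge 0$ and $u_A\t x(0)\neq 0$, this gives $u_A\t x(t)>0$ for all $t\in[0,T]$. Symmetrically $u_a\t x(t)>0$, hence also $\bfo\t x(t)=u_A\t x(t)+u_a\t x(t)>0$.

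\emph{Step 2: each genotype becomes positive.} Recall the recruitment rates $m_i$ are positive valued (Assumption \ref{ass0}). For \eqref{SM0}, $\bfo\t M(x)x\ge\big(\min_i m_i(b^*(x))\big)\,\bfo\t x>0$ on $[0,T]$, and $u_A\t M(x)x=m_1(b^*(x))x_1+\tfrac12 m_2(b^*(x))x_2>0$ because $x_1+\tfrac12 x_2=u_A\t x>0$ forces $x_1>0$ or $x_2>0$; likewise $u_a\t M(x)x>0$. Hence, by \eqref{Sa}--\eqref{Sc}, the source term $\alpha_i(M(x)x)$ of $\dot x_i$ is a continuous, strictly positive function of $t$ on $[0,T]$, for $i=1,2,3$. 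For \eqref{LM0}, using $\bfo\t\Aset(x)=\bfo\t x>0$ (Property \ref{ppii} of Lemma \ref{ppA}) so that $b^*(\Aset(x))$ is well defined with $m_i(b^*(\Aset(x)))>0$, together with $u_A\t x,u_a\t x>0$, formulas \eqref{La}--\eqref{Lc} likewise show the source term of $\dot x_i$ is continuous and strictly positive on $[0,T]$. Denoting this source by $\rho_i(t)>0$, we have $\dot x_i(t)\ge \rho_i(t)-\mu_i(w\t x(t))x_i(t)\ge \rho_i(t)-Kx_i(t)$, whence by the variation-of-constants comparison $x_i(t)\ge \int_0^t e^{-K(t-s)}\rho_i(s)\,ds>0$ for every $t\in(0,T]$. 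Since $T<T_{\max}$ was arbitrary, $x_i(t)>0$ for all $t>0$, which is the claim.

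\emph{Main difficulty.} The conceptual point is Step 2: a genotype absent at $t=0$ is recreated immediately through random mating of the two alleles that are present, and this must be quantified as a \emph{strictly} positive recruitment rate --- which is exactly where the strict positivity of $u_A\t x$ and $u_a\t x$ from Step 1, and the positivity of the $m_i$, enter. The only technical nuisance is that, in the absence of Assumption \ref{ass3}, boundedness of the trajectory is guaranteed only on compact subintervals of the maximal existence interval, so every estimate is carried out on $[0,T]$ and then $T\uparrow T_{\max}$; the remainder is routine Grönwall and comparison.
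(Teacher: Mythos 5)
Your proof is correct, and it reaches the conclusion by a slightly different route than the paper. The paper's own argument is purely local-then-persistent: it first checks by a short case analysis that any coordinate vanishing at $t=0$ has a strictly positive derivative there (because the Mendelian source $\alpha_i$ is positive when both alleles are present), and then invokes the single differential inequality $\dot x_i\geq -\mu_i(w\t x)x_i$, integrated to $x_i(t)\geq x_i(t')e^{-\int_{t'}^t\mu_i(w\t x(s))\,ds}$, to propagate positivity forward. You instead first prove, via Property {\it\ref{ppi}} of Lemma \ref{ppA} and a Gr\"onwall bound on the allelic sums $u_j\t x$ (essentially the computation that the paper only formalises later in \eqref{ady}), that both alleles remain positive for all time, deduce that the recruitment term is a \emph{uniformly} positive source on each compact time interval, and conclude by variation of constants. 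The two proofs share the same mechanism --- mating of the two present alleles regenerates every genotype, while mortality can only cause exponential decay --- but yours trades the paper's case analysis at $t=0$ for an extra allelic-positivity step, which makes the argument a bit longer yet arguably more systematic (it gives a quantitative lower bound $x_i(t)\geq\int_0^t e^{-K(t-s)}\rho_i(s)\,ds$ rather than mere positivity). One small remark: your assertion that the $m_i$ are positive-valued is not literally contained in Assumption \ref{ass0} (which only states $m_i:\Rset_+\to\Rset_+$ decreasing), but it is exactly what the paper itself uses when it declares $M(x)$ in \eqref{M,mu} to be a positive diagonal matrix, and the paper's own proof of this lemma needs it just as much as yours does; so this is not a gap attributable to your argument. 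Your caution about working on $[0,T]$ with $T<T_{\max}$ is harmless but unnecessary, since Theorem \ref{le01} already asserts global existence under Assumption \ref{ass0} alone.
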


\begin{proof}[Proof of Lemma  \rm\ref{le15}]
	If $x_i(0)= 0$ for $i=1$ or 3, and $x_2(0)> 0$, then the corresponding derivative $\dot x_i$ is positive at time $t=0$, and thus $x_i$ takes on positive values at the right of $0$. Similarly, if $x_i(0)> 0$ for $i=1$ and 3, and $x_2(0)= 0$, the same occurs for the derivative $\dot x_2$, and $x_2$ is also positive at the right of 0.
	
	On the other hand, one sees that, for any $t\geq 0$ and $i=1,2,3$,
	\begin{equation}
	\label{dotx+mu} 
	\dot{x_i}  \geq -\mu_i(w\t x)x_i\ . 
	\end{equation}
	Therefore
	\begin{equation}
	\forall t\geq t' \geq 0,\qquad
	x_i(t) \geq x_i(t') e^{-\int_{t'}^t \mu_i(w\t x(s)) ds}
	\end{equation} which is positive whenever $x_i(t')>0$.
	This establishes the desired inequality.
\end{proof}

In view of Lemma \ref{le15}, we put

\begin{defi}[Polymorphic trajectories]
	Any trajectory such that  $u_A\t x(0) \neq 0$ and $u_a\t x(0) \neq 0$ is called a {\em polymorphic trajectory}.
\end{defi}

Due to Lemma \ref{le15}, any polymorphic trajectory is constituted of holomorphic states, except possibly at its initial point.

\subsection{Mean allelic mortality and recruitment rates}
\label{se518}

In order to study selection in Section \ref{se53}, we associate here to each of the systems \eqref{SM0} and \eqref{LM0} two {\em mean allelic rates}, which are defined {\em at any polymorphic state}.
Due to Lemma \ref{le15} and property {\it\ref{ppi}} in Lemma \ref{ppA}, these rates are well defined along any polymorphic trajectory.

\begin{defi}[Mean allelic mortality and recruitment rates]
\label{de99}
\mbox{}

\noindent
{\bf $\bullet$ System \eqref{SM0}.}
For any polymorphic state $x \in \Rset_+^3$, define the {\em mean allelic recruitment rates} $\tm_{\ref{SM0},j}(x)$:
\begin{subequations}
\label{eqmA5}
	\begin{equation}
	\tm_{\ref{SM0},j}(x) := \frac{u_j\t M(x)x}{u_j\t x},\qquad j=A,a
	\label{eqmAR5}
	\end{equation}
and the {\em mean allelic mortality rates} $\tmu_{\ref{SM0},j}(x)$:
	\begin{equation}
	\tmu_{\ref{SM0},j}(x) := \frac{u_j\t \mu(w\t x)x}{u_j\t x},\qquad j=A,a.
	\label{eqmuAR5}
	\end{equation}
 \end{subequations}
  
\noindent
{\bf $\bullet$ System \eqref{LM0}.}
For any polymorphic state $x \in \Rset_+^3$, define the {\em mean allelic recruitment rates} $\tm_{\ref{LM0},j}(x)$:
\begin{subequations}
\label{eqmA8}
	\begin{equation}
	\tm_{\ref{LM0},j}(x) := \frac{u_j\t M(\Aset(x))\Aset(x)}{u_j\t x},\qquad j=A,a
	\label{eqmAR8}
	\end{equation}
	and the {\em mean allelic mortality rates} $\tmu_{\ref{LM0},j}(x)$:
	\begin{equation}
	\tmu_{\ref{LM0},j}(x) := \frac{u_j\t \mu(w\t x)x}{u_j\t x},\qquad j=A,a.
	\label{eqmuAR8}
	\end{equation}
 \end{subequations}

\end{defi}   
 
 The fundamental interest of the previous definitions is to allow writing the evolution of the allelic populations {\em along any polymorphic trajectory} as:
\begin{subequations}
\label{ady}
\begin{eqnarray}
\nonumber
u_j\t \dot x
& = & u_j\t \left(
\Aset(M(x)x)-\mu(w\t x)x
\right)
= u_j\t M(x)-  \tmu_{\ref{SM0},j}(x) u_j\t x\\
& = &
\label{adya}
\left(
\tm_{\ref{SM0},j}(x) - \tmu_{\ref{SM0},j}(x)
\right) u_j\t x,\qquad j=A,a
\end{eqnarray}
for system \eqref{SM0}; and similarly for system \eqref{LM0}:
 \begin{eqnarray}
u_j\t \dot x
& = &
\nonumber
u_j\t \left(
M(\Aset(x))\Aset(x)-\mu(w\t x)x
\right)
= \left(
\frac{u_j\t M(\Aset(x))\Aset(x)}{u_j\t \Aset(x)} - \tmu_{\ref{LM0},j}(x)
\right) u_j\t x\\
\label{adyb}
& = &
\left(
\tm_{\ref{LM0},j}(x) - \tmu_{\ref{LM0},j}(x)
\right) u_j\t x,\qquad j=A,a
\end{eqnarray}
\end{subequations}
(Property {\it\ref{ppi}} of Lemma \ref{ppA} was used in the previous deductions.)

A key characteristic of the objects introduced in Definition \ref{de99} is summarised in the following lemma.

\begin{lemm}[Ordering of the mean allelic rates]
\label{oma}
Assume Assumptions {\rm\ref{ass0}} and {\rm\ref{ass1}} are fulfilled.
Then for any polymorphic state $x \in \Rset_+^3$, one has
\begin{subequations}
\label{eq102}
\begin{gather}
m_1(b^*(x)) \geq \tm_{\ref{SM0},A}(x) \geq m_2(b^*(x)) \geq \tm_{\ref{SM0},a}(x) \geq m_3(b^*(x))\ ,\\
m_1(b^*(\Aset(x))) \geq \tm_{\ref{LM0},A}(x) \geq m_2(b^*(\Aset(x))) \geq \tm_{\ref{LM0},a}(x) \geq m_3(b^*(\Aset(x)))
\end{gather}
\end{subequations}
and
\begin{subequations}
\label{eq10}
\begin{gather}
\mu_1(w\t x) \leq \tmu_{\ref{SM0},A}(x) \leq \mu_2(w\t x) \leq \tmu_{\ref{SM0},a}(x) \leq \mu_3(w\t x)\ ,
\label{eq10a}\\
\mu_1(w\t x) \leq \tmu_{\ref{LM0},A}(x) \leq \mu_2(w\t x) \leq \tmu_{\ref{LM0},a}(x) \leq \mu_3(w\t x)
\label{eq10b}
\end{gather}
\end{subequations}
\end{lemm}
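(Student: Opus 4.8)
The plan is to prove the chain of inequalities for the mean allelic recruitment rates and the mean allelic mortality rates separately, and in each case to exploit the fact that the quantities $\tm_{\ref{SM0},j}(x)$, $\tmu_{\ref{SM0},j}(x)$ (and their {\bf S} counterparts) are convex combinations of the per-genotype rates $m_i(\cdot)$, resp.\ $\mu_i(w\t x)$. Concretely, for system \eqref{SM0} write $y:=M(x)x$, so $y_i=m_i(b^*(x))x_i\geq 0$ and, since $x$ is polymorphic, $u_A\t y>0$ and $u_a\t y>0$ by Property~\ref{ppi} in Lemma~\ref{ppA} applied to the positivity of $u_j\t x$ together with $m_i(b^*(x))>0$. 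Then
\[
\tm_{\ref{SM0},A}(x)=\frac{u_A\t M(x)x}{u_A\t x}
=\frac{\sum_{i=1}^3 (u_A)_i\, m_i(b^*(x))\, x_i}{\sum_{i=1}^3 (u_A)_i\, x_i},
\]
which, because the weights $(u_A)_i x_i$ are nonnegative and not all zero, is a weighted average of the values $m_1(b^*(x)),m_2(b^*(x)),m_3(b^*(x))$ over those indices $i$ with $(u_A)_i\neq 0$, i.e.\ $i=1,2$ (since $(u_A)_3=0$). Hence $m_2(b^*(x))\leq \tm_{\ref{SM0},A}(x)\leq m_1(b^*(x))$ by the ordering in Assumption~\ref{ass1}. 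Symmetrically, $u_a$ is supported on $i=2,3$, so $m_3(b^*(x))\leq \tm_{\ref{SM0},a}(x)\leq m_2(b^*(x))$. Concatenating the two gives the first line of \eqref{eq102}; the second line is identical with $x$ replaced by $\Aset(x)$ in the argument of $b^*$ and $M(\Aset(x))\Aset(x)$ in place of $M(x)x$, using $u_j\t\Aset(x)=u_j\t x$ from Lemma~\ref{ppA}\ref{ppi} to keep the denominator $u_j\t x$.

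For the mortality rates the argument is the same but now literally identical for {\bf F} and {\bf S}, since in both cases
\[
\tmu_{\bullet,j}(x)=\frac{u_j\t \mu(w\t x)x}{u_j\t x}
=\frac{\sum_{i=1}^3 (u_j)_i\,\mu_i(w\t x)\,x_i}{\sum_{i=1}^3 (u_j)_i\,x_i},
\]
again a convex combination of $\mu_1(w\t x),\mu_2(w\t x),\mu_3(w\t x)$ restricted to the support of $u_j$. For $j=A$ (support $i=1,2$) this yields $\mu_1(w\t x)\leq\tmu_{\bullet,A}(x)\leq\mu_2(w\t x)$, and for $j=a$ (support $i=2,3$) it yields $\mu_2(w\t x)\leq\tmu_{\bullet,a}(x)\leq\mu_3(w\t x)$, using the ordering $\mu_1\leq\mu_2\leq\mu_3$ from Assumption~\ref{ass1}. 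Chaining these gives both \eqref{eq10a} and \eqref{eq10b}.

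The only genuinely delicate point, and the one I would be most careful about, is ensuring all denominators are strictly positive so that ``convex combination'' is legitimate: at a polymorphic state one has $u_A\t x>0$ and $u_a\t x>0$ by definition (and by Lemma~\ref{le15} along polymorphic trajectories), and for the recruitment rates one additionally needs $u_j\t M(x)x>0$, which holds because $M(x)$ is a positive diagonal matrix (its entries $m_i(b^*(x))$ are strictly positive, $m_i$ mapping into $\Rset_+$ and, combined with Assumption~\ref{ass2}/monotonicity, staying positive) — so $u_j\t M(x)x=\sum_i (u_j)_i m_i(b^*(x)) x_i$ is a sum of nonnegative terms at least one of which is positive. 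One should also note the degenerate subcase where the support of $u_j$ meets the support of $x$ in a single index (e.g.\ a polymorphic $x$ with $x_1=0$): then the ``average'' is just one of the $m_i$'s and the inequalities still hold as equalities at the appropriate end of the chain, consistent with Assumption~\ref{ass1}. No other ingredients are needed; in particular Assumption~\ref{ass3} plays no role here.
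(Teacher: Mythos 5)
Your proof is correct and follows essentially the same route as the paper's: both write each mean allelic rate explicitly as a weighted average of the per-genotype rates over the support of $u_j$ (indices $1,2$ for $j=A$, indices $2,3$ for $j=a$) and read off the chains of inequalities from the ordering in Assumption~\ref{ass1}. Your additional care about strict positivity of the denominators $u_j\t x$ at polymorphic states (and the use of $u_j\t\Aset(x)=u_j\t x$ for system \eqref{LM0}) is sound, and if anything slightly more explicit than the paper's one-line justification.
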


\begin{proof}
For some $j\in\{A,a\}$ consider e.g.\ the map $\tmu_{\ref{SM0},j}$.
One has
\begin{equation}
\tmu_{\ref{SM0},j}(x) := \frac{u_j\t \mu(w\t x)x}{u_j\t x}
\end{equation}
and thus
\begin{equation}
\tmu_{\ref{SM0},1}(x)
= \frac{\mu_1(w\t x)x_1 + \frac{1}{2}\mu_2(w\t x)x_2}{x_1 + \frac{1}{2}x_2},\qquad
\tmu_{\ref{SM0},3}(x)
= \frac{\mu_3(w\t x)x_3 + \frac{1}{2}\mu_2(w\t x)x_2}{x_1 + \frac{1}{2}x_2}
\end{equation}
and Assumption \ref{ass1} yields immediately \eqref{eq10a}.
The three other formulas are proved in the same way.
\end{proof}

\comment{
	Due to the positivity hypothesis in Assumption \ref{ass1}, one has the following result.
	
	\begin{lemm}
		\label{le77}
		Let $x$ be any polymorphic state such that
		\begin{equation}
		\label{eq86}
		\tmu_A(x) = \tmu_A(x) \qquad \text{ \em and } \qquad \tm_A(x) = \tm_a(x)
		\end{equation}
		Then $x_1=x_3=0$.
	\end{lemm}
	
	\begin{proof}
		Let $x$ be a polymorphic state fulfilling \eqref{eq86}.
		Assume first that $m_1(b^*(x)) > m_3(b^*(x))$.
		Then, in order to have $\tilde m_1(x) = \tilde m_3(x)$, one must necessarily have $x_1=x_3=0$, see formula \eqref{eqmAR5}.
		
		If now $m_1(b^*(x)) = m_3(b^*(x))$, then $\mu_1(\bfo\t x) < \mu_1(\bfo\t x)$, due to the strict inequality stated in Assumption \ref{ass1}.
		Now, $\tmu_A(x) = \tmu_A(x)$ also implies that $x_1=x_3=0$, see \eqref{eqmuAR5}.
		
		In conclusion, when \eqref{eq86} is fulfilled, then $x_1=x_3=0$ in any case.
		This demonstrates Lemma \ref{le77}.
	\end{proof}
}

\section{Analysis of the selectively neutral case}
\label{se52}

One considers here the {\em selectively neutral} case, where recruitment and mortality rates are identical for all genotypes.
Write in this case $\mu_\text{\rm sn}:=\mu_i$, $m_\text{\rm sn}:=m_i$ for $i = 1,2,3$.
As a consequence of Assumption \ref{ass0}, $m_\text{\rm sn}$ is decreasing and $\mu_\text{\rm sn}$ increasing.
Equation \eqref{a*} here writes
\begin{equation}
b = m_{\rm sn} (b) v\t x
\end{equation}
and its unique scalar solution clearly depends upon $x$ only through the quantity $v\t x$.
To emphasize this fact, it is denoted $b_{\rm sn}(v\t x)$, rather than $b^*(x)$.
As a direct consequence of Lemma \ref{a0}, $b_{\rm sn}$ is null at zero and takes on positive values otherwise.
Also, it comes from the second point of Lemma \ref{le0} that $b_{\rm sn}$ is increasing.
The following result for \eqref{SM0} and \eqref{LM0} then presents no difficulty. 

\begin{lemm} 
	\label{le44}
	Assume Assumptions  {\rm\ref{ass0}, \ref{ass2}}
	and {\rm\ref{ass3}} are fulfilled.
	For any $x\in\Rset_+^3\setminus\{0\}$, there exists a unique solution, denoted $c_{\rm sn}^*(x)$ in the sequel, of the equation
	\begin{equation}
	\label{eq19}
	m_{\rm sn}(b_{\rm sn}(c v\t x))=\mu_{\rm sn}(c w\t x)
	\end{equation} 
\end{lemm}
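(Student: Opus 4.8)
The plan is to fix $x\in\Rset_+^3\setminus\{0\}$ and recast \eqref{eq19} as the search for a zero of a single scalar function of $c$. Since $v,w$ have positive components (Assumption \ref{ass0}) and $x$ is nonzero and nonnegative, the quantities $p:=v\t x$ and $q:=w\t x$ are strictly positive. Put
\[
\phi(c):=m_{\rm sn}\big(b_{\rm sn}(cp)\big)-\mu_{\rm sn}(cq),\qquad c\geq 0,
\]
which is continuous because $m_{\rm sn},\mu_{\rm sn}$ are locally Lipschitz (Assumption \ref{ass0}) and $b_{\rm sn}$ inherits the regularity of the $m_i$ (Lemma \ref{a0}); solving \eqref{eq19} amounts to finding a zero of $\phi$.

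I would first record the strict monotonicity of $\phi$, which yields uniqueness. The map $c\mapsto cp$ is increasing since $p>0$; by the second point of Lemma \ref{le0}, specialised to the selectively neutral case (where $b^*(\lambda x)=b_{\rm sn}(\lambda v\t x)$), the map $s\mapsto b_{\rm sn}(s)$ is increasing; and $m_{\rm sn}$ is decreasing (Assumption \ref{ass0}). Hence $c\mapsto m_{\rm sn}(b_{\rm sn}(cp))$ is non-increasing, while $c\mapsto\mu_{\rm sn}(cq)$ is strictly increasing because $\mu_{\rm sn}$ is increasing and $q>0$. Therefore $\phi$ is strictly decreasing on $[0,+\infty)$, so \eqref{eq19} has at most one solution.

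For existence I would inspect the endpoint values of $\phi$ and apply the intermediate value theorem. Since $b_{\rm sn}(0)=0$ (Lemma \ref{a0}) and $m_{\rm sn}=m_1$, $\mu_{\rm sn}=\mu_1$ in the selectively neutral case, Assumption \ref{ass2} gives $\phi(0)=m_1(0)-\mu_1(0)>0$. On the other hand, $b_{\rm sn}(cp)=b^*(cx)\to+\infty$ as $c\to+\infty$ by the first point of Lemma \ref{le0}, so $m_{\rm sn}(b_{\rm sn}(cp))$ decreases to $\underline m:=\lim_{\lambda\to+\infty}m_1(b^*(\lambda x))\in[0,+\infty)$, while $\mu_{\rm sn}(cq)$ increases to $\overline\mu:=\lim_{\lambda\to+\infty}\mu_1(\lambda)\in(0,+\infty]$; the comparison in the first point of Lemma \ref{le0} gives $\underline m<\overline\mu$, hence $\lim_{c\to+\infty}\phi(c)=\underline m-\overline\mu<0$ (also when $\overline\mu=+\infty$). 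Thus $\phi(\bar c)<0$ for some $\bar c>0$, and by continuity $\phi$ vanishes at some point of $(0,\bar c)$; strict monotonicity makes this zero unique. It is this (positive) zero that one names $c_{\rm sn}^*(x)$.

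The argument is mostly routine; the one place deserving attention is the limit as $c\to+\infty$, where one must transfer the asymptotic inequalities of Lemma \ref{le0} (which themselves rely on Assumption \ref{ass3}) from the general map $b^*$ to its scalar surrogate $b_{\rm sn}$ valid in the selectively neutral case, and accommodate the possibility $\overline\mu=+\infty$. Everything else rests on the monotonicity built into Assumption \ref{ass0} and the viability condition of Assumption \ref{ass2}.
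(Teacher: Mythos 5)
Your proof is correct and follows essentially the same route as the paper: both arguments exploit that $c\mapsto m_{\rm sn}(b_{\rm sn}(c\,v\t x))$ is decreasing while $c\mapsto\mu_{\rm sn}(c\,w\t x)$ is increasing, that Assumption \ref{ass2} makes the difference positive at $c=0$, and that Assumption \ref{ass3} (via Lemma \ref{le0}) forces it to become negative as $c\to+\infty$, so a unique crossing exists. Your write-up is merely more explicit than the paper's about transferring the limits from $b^*$ to $b_{\rm sn}$ and about the case $\lim_{\lambda\to+\infty}\mu_{\rm sn}(\lambda)=+\infty$, which is a welcome clarification rather than a divergence.
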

\begin{proof}
	Due to Assumption \ref{ass0}, the map $c\mapsto m_{\rm sn}\circ b_{\rm sn}(c v\t x)$ is decreasing, while the map $c\mapsto \mu_{\rm sn}(c w\t x)$ is increasing.
	On the other hand, $m_{\rm sn}(b_{\rm sn}(0)) = m_{\rm sn}(0) > \mu_{\rm sn}(0)$ from Assumption \ref{ass2};
	and $0\leq \displaystyle \lim_{z\to +\infty} m_{\rm sn}(z) < \lim_{z\to +\infty} \mu_{\rm sn}(z) \leq +\infty$ due to Assumption \ref{ass3}.
	This demonstrates Lemma \ref{le44}.
\end{proof}

Both systems \eqref{SM0} and \eqref{LM0} now boil down to the equation
\begin{equation}
\label{eqsf}
\dot x = m_{\rm sn}(b_{\rm sn}(v\t x)) \Aset(x) -\mu_{\rm sn}(w\t x) x\ .
\end{equation}
The asymptotic behaviour of the solutions of \eqref{eqsf} is completely described by the following result.
In particular, its proof makes clear that the total population follows a variant of Verhulst's logistic growth equation \cite{wilson1971primer}.

\begin{theo}[Hardy-Weinberg law in selectively neutral evolution]
	\label{theo0}
	Assume Assumptions  {\rm\ref{ass0}, \ref{ass2}}
		and {\rm\ref{ass3}} are fulfilled, and that the recruitment and mortality rates are the same for all genotypes.
	Then for any nonzero initial condition, the solutions of \eqref{SM0}, resp.\ \eqref{LM0}, verify
	\begin{equation}
	\label{eq22}
	\forall t\geq 0,\
	\frac{u_j\t x(t)}{\bfo\t x(t)} = \frac{u_j\t x(0)}{\bfo\t x(0)}:= p_j,\qquad j=A,a
	\end{equation}
	where $p_A+p_a=1$, and
	\begin{equation}
	\label{eq21}
	\lim_{t\to +\infty} x(t) =
	c_{\rm sn}^*\left(
	\begin{pmatrix}
	p_A^2\\ 2p_Ap_a \\ p_a^2
	\end{pmatrix}
	\right)
	 \begin{pmatrix}
	p_A^2\\ 2p_Ap_a \\ p_a^2
	\end{pmatrix}
	\end{equation}
	for $c_{\rm sn}^*$ defined in Lemma {\rm \ref{le44}}.
\end{theo}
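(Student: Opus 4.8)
The plan is to exploit the fact that both systems collapse to the single scalar-field equation \eqref{eqsf}, and to decouple the dynamics of the allele frequencies from that of the total population size. First I would establish the conservation law \eqref{eq22}. Along any trajectory, Property \emph{\ref{ppi}} of Lemma \ref{ppA} gives $u_j\t\Aset(x)=u_j\t x$ and Property \emph{\ref{ppii}} gives $\bfo\t\Aset(x)=\bfo\t x$; applying $u_j\t$ and $\bfo\t$ to \eqref{eqsf} yields
\begin{align*}
u_j\t\dot x &= \bigl(m_{\rm sn}(b_{\rm sn}(v\t x))-\mu_{\rm sn}(w\t x)\bigr)\,u_j\t x,\qquad j=A,a,\\
\bfo\t\dot x &= \bigl(m_{\rm sn}(b_{\rm sn}(v\t x))-\mu_{\rm sn}(w\t x)\bigr)\,\bfo\t x.
\end{align*}
Hence the scalar ratio $\rho_j(t):=u_j\t x(t)/\bfo\t x(t)$ satisfies $\dot\rho_j=0$ (the common factor cancels), so $\rho_j(t)\equiv p_j$, and $p_A+p_a=1$ because $u_A+u_a=\bfo$. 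Here one needs $\bfo\t x(t)>0$ for all $t\ge 0$: this follows from Theorem \ref{le01} (nonnegativity) together with the observation that $\bfo\t\dot x\ge -\mu_{\rm sn}(w\t x)\bfo\t x$ forces $\bfo\t x(t)\ge \bfo\t x(0)\,e^{-\int_0^t\mu_{\rm sn}(w\t x(s))\,ds}>0$ for a nonzero initial condition.

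Next I would reduce the genotype vector to a single scalar. Since $\Aset(x)$ depends on $x$ only through $u_A\t x$ and $u_a\t x$ (see \eqref{alpha}), and since these equal $p_A\bfo\t x$ and $p_a\bfo\t x$ by the conservation law, one can write, with $n(t):=\bfo\t x(t)$ and $p:=(p_A^2,\,2p_Ap_a,\,p_a^2)\t$,
\[
\Aset(x(t)) = n(t)\,p,\qquad\text{while}\qquad x(t)=n(t)\Bigl(\tfrac{u_A\t x}{\bfo\t x},\ \ldots\Bigr)\t
\]
does \emph{not} a priori equal $n(t)p$; so I would instead track the three components directly. From \eqref{eqsf} and homogeneity of $\Aset$ (Property \emph{\ref{ppiii}}),
\[
\dot x_i = m_{\rm sn}(b_{\rm sn}(v\t x))\,n\,p_i - \mu_{\rm sn}(w\t x)\,x_i,\qquad i=1,2,3.
\]
Since $v\t x$ and $w\t x$ both equal (a fixed positive linear combination of $p_A,p_a$) times $n$, write $v\t x=\gamma_v n$, $w\t x=\gamma_w n$ with $\gamma_v,\gamma_w>0$ constants along the trajectory; then $n$ obeys the autonomous scalar ODE
\[
\dot n = \bigl(m_{\rm sn}(b_{\rm sn}(\gamma_v n))-\mu_{\rm sn}(\gamma_w n)\bigr)\,n,
\]
a generalized logistic equation whose right-hand bracket is strictly decreasing in $n$ (Assumption \ref{ass0} and monotonicity of $b_{\rm sn}$ from Lemma \ref{le0}), is positive at $n=0^+$ (Assumption \ref{ass2}) and negative for large $n$ (Assumption \ref{ass3}); it therefore has a unique positive zero $n^*$ which is globally asymptotically stable on $(0,+\infty)$. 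One checks that $n^*$ is exactly $c_{\rm sn}^*(p)$ of Lemma \ref{le44}: indeed $\gamma_v n = v\t(n p)$ and $\gamma_w n = w\t(np)$ so the defining equation $m_{\rm sn}(b_{\rm sn}(\gamma_v n^*))=\mu_{\rm sn}(\gamma_w n^*)$ is \eqref{eq19} with $x$ replaced by $p$ and $c$ by $n^*$.

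Finally I would transfer convergence from $n$ to $x$. Having $n(t)\to n^*=c_{\rm sn}^*(p)$, I claim each component converges: set $y_i:=x_i-n p_i$. Subtracting $n p_i$-dynamics, and using $\dot{(n p_i)}=p_i\dot n=(m_{\rm sn}-\mu_{\rm sn})n p_i$, one gets $\dot y_i = -\mu_{\rm sn}(w\t x)\,y_i$, hence $y_i(t)=y_i(0)e^{-\int_0^t\mu_{\rm sn}(\gamma_w n(s))\,ds}$; since $n(s)$ is bounded below by a positive constant for $s$ large (it converges to $n^*>0$), the integral diverges and $y_i(t)\to 0$, giving $x_i(t)\to n^* p_i = c_{\rm sn}^*(p)\,p_i$, which is \eqref{eq21}. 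I expect the main obstacle to be the bookkeeping around which linear functionals ($v\t x$, $w\t x$, $\bfo\t x$, $u_j\t x$) stay proportional along a trajectory and the verification that $n^*$ coincides with $c_{\rm sn}^*(p)$; once the conservation law \eqref{eq22} is in hand the rest is the standard phase-line analysis of a logistic-type scalar equation plus a linear comparison argument for the off-manifold error $y_i$.
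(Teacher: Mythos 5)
Your derivation of the conservation law \eqref{eq22} is exactly the paper's, and your observation that $y_i := x_i - (\bfo\t x)p_i$ obeys $\dot y_i = -\mu_{\rm sn}(w\t x)\,y_i$ is a clean equivalent of the paper's identity \eqref{eqsf5} (slightly cleaner, in fact, since it avoids dividing by $p_A^2$ and $2p_Ap_a$). The gap is in the middle step: you assert that $v\t x=\gamma_v n$ and $w\t x=\gamma_w n$ with $\gamma_v,\gamma_w$ \emph{constant along the trajectory}, and hence that $n=\bfo\t x$ obeys an autonomous logistic ODE. This is false in general. The conservation law only fixes the two allelic functionals $u_A\t x=p_A n$ and $u_a\t x=p_a n$; writing $x_1=p_An-\tfrac12 x_2$ and $x_3=p_an-\tfrac12 x_2$ gives
\[
v\t x=(v_1p_A+v_3p_a)\,n+\Bigl(v_2-\tfrac{v_1+v_3}{2}\Bigr)x_2,
\]
and $x_2/n$ is \emph{not} conserved --- its convergence to $2p_Ap_a$ is precisely the Hardy--Weinberg conclusion you are trying to prove. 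So unless $v$ and $w$ happen to satisfy $v_2=\tfrac{v_1+v_3}{2}$ (and likewise for $w$), the equation for $n$ is not autonomous, the phase-line analysis does not apply as stated, and the identification $\gamma_v n^*=v\t(n^*p)$ used to match $n^*$ with $c_{\rm sn}^*(p)$ also fails off the half-line. You noted yourself that $x\neq np$ a priori, which contradicts this step.

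The repair is to reverse the order of your last two steps, which is what the paper does: first use the decay of $y_i$ to conclude that the trajectory converges to the half-line $\{cp:\ c>0\}$ --- for this you need $\int_0^{+\infty}\mu_{\rm sn}(w\t x(s))\,ds=+\infty$, which must come from a positive lower bound on $w\t x$ along the trajectory rather than from the not-yet-established limit of $n$ --- and only then observe that the $n$-dynamics is asymptotically governed by the scalar equation $\dot c=\bigl(m_{\rm sn}(b_{\rm sn}(c\,v\t p))-\mu_{\rm sn}(c\,w\t p)\bigr)c$, whose unique positive equilibrium is $c_{\rm sn}^*(p)$ by \eqref{eq19}, and conclude by the phase-line argument on the invariant half-line (formally, an asymptotically autonomous comparison). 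With that reordering your argument coincides with the paper's proof.
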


Theorem \rm\ref{theo0} establishes that in absence of asymmetric competition among genotypes, systems \eqref{SM0} and \eqref{LM0} fulfil the Hardy-Weinberg Principle: the allele frequencies remain constant over time and determine the asymptotic genotype frequencies; while the total number of individuals converges to the unique population level (defined by \eqref{eq19}) which forms the carrying capacity for this relative repartition.

\begin{proof}[Proof of Theorem \rm\ref{theo0}]
\mbox{}

\noindent $\bullet$
	For any $j=A,a$, one has
	\begin{equation}
	u_j\t \dot x = u_j\t \left(
	\vphantom{\hat A}
	m_{\rm sn}(b_{\rm sn}(v\t x)) \Aset(x) -\mu_{\rm sn}(w\t x) x
	\right)
	= \left(
	\vphantom{\hat A}
	m_{\rm sn}(b_{\rm sn}(v\t x)) -\mu_{\rm sn}(w\t x)
	\right) u_j\t x
	\end{equation}
	using property {\it\ref{ppi}} in Lemma \ref{ppA}.
	Therefore
	\begin{equation}
	\label{eq24}
	\bfo\t \dot x
	= (u_A+u_a)\t \dot x 
	= (m_{\rm sn}(b_{\rm sn}(v\t x)) -\mu_{\rm sn}(w\t x)) \bfo\t x
	\end{equation}
	The two ratios $\displaystyle\frac{u_j\t x(t)}{\bfo\t x(t)}$ are defined for any $t\geq 0$ and differentiable with respect to time, with
	\begin{equation}
	\label{HW}
	\frac{d}{dt} \left(
	\frac{u_j\t x}{\bfo\t x}
	\right)
	= \frac{(u_j\t \dot x)(\bfo\t x)-(u_j\t x)(\bfo\t \dot x)}{(\bfo\t x)^2}
	= (m_{\rm sn}(b_{\rm sn}(v\t x)) -\mu_{\rm sn}(w\t x))
	\frac{(u_j\t x)(\bfo\t x)-(u_j\t x)(\bfo\t x)}{(\bfo\t x)^2}
	= 0
	\end{equation}
	which yields \eqref{eq22} by integration.
	
\noindent $\bullet$
One now studies the evolution of the components of the vector $x$.
One may rewrite  formally \eqref{eqsf} as
\begin{equation}
\label{eqsf2}
\dot x =  \hat m(t) \Aset(x) -\hat\mu(t) x
\end{equation}
where for simplicity we put the scalar functions $\hat m(t) :=m_{\rm sn}(b_{\rm sn}(v\t x(t)))$ and $\hat\mu(t) := \mu_{\rm sn}(w\t x(t))$.
Using \eqref{eq22}, which has just been demonstrated, one may write, {\em for any $t\geq 0$},
\begin{equation}
\label{eqsf3}
\Aset (x(t)) = \frac{1}{\bfo\t x(t)}
\begin{pmatrix}
(u_A\t x(t))^2 \\ 2(u_A\t x(t))(u_a\t x(t)) \\ (u_a\t x(t))^2
\end{pmatrix}
= (\bfo\t x(t)) \begin{pmatrix}
p_A^2 \\ 2p_Ap_a \\ p_a^2
\end{pmatrix}\ .
\end{equation}
Consequently, one obtains from \eqref{eqsf2} the identities:
\begin{equation}
\label{eqsf4}
\forall t\geq 0,\
\hat m(t) (\bfo\t x(t))
= \frac{1}{p_A^2} \left(
\vphantom{\int}
\dot x_1 + \hat\mu(t) x_1(t)
\right)
= \frac{1}{2p_Ap_a} \left(
\vphantom{\int}
\dot x_2 + \hat\mu(t) x_2(t)
\right)
= \frac{1}{p_a^2} \left(
\vphantom{\int}
\dot x_3 + \hat\mu(t) x_3(t)
\right)
\end{equation}
One then deduces that
\begin{equation}
\label{eqsf5}
\frac{d}{dt}\left(
\frac{x_1}{p_A^2}-\frac{x_2}{2p_Ap_a}
\right) + \hat\mu(t) \left(
\frac{x_1}{p_A^2}-\frac{x_2}{2p_Ap_a}
\right)
= \frac{d}{dt}\left(
\frac{x_3}{p_a^2}-\frac{x_2}{2p_Ap_a}
\right) + \hat\mu(t) \left(
\frac{x_3}{p_a^2}-\frac{x_2}{2p_Ap_a}
\right)
= 0\ .
\end{equation}
The function $\hat\mu(t)$ is uniformly bounded from below by a positive constant on the set $\{ x\in\Rset_+^3\ :\ \bfo\t x \leq c^*\}$, due to Theorem \ref{le01}.
Thus $\int_0^{+\infty} \hat\mu(t)\ dt =+\infty$, and one deduces by integration of \eqref{eqsf5} that the limits in the following formula exist, and consequently that the identities themselves are true:
\begin{equation}
\label{eq777}
\lim_{t\to +\infty} \left(
\frac{x_1(t)}{p_A^2}-\frac{x_2(t)}{2p_Ap_a}
\right)
= \lim_{t\to +\infty} \left(
\frac{x_3(t)}{p_a^2}-\frac{x_2(t)}{2p_Ap_a}
\right)
= 0\ .
\end{equation}
Therefore, the evolution occurs asymptotically on the half-line
\begin{equation}
\left\{
x\in\Rset_+^3\ :\ \exists c>0, x=c \begin{pmatrix}
p_A^2 \\ 2p_Ap_a \\ p_a^2
\end{pmatrix}
\right\}\ .
\end{equation}

\noindent $\bullet$
Now, the evolution of the state $x(t) = c(t) \begin{pmatrix}
p_A^2 \\ 2p_Ap_a \\ p_a^2
\end{pmatrix}$ on the previous half-line is dictated by the evolution of $c(t)$.
The fact that $p_A+p_a=1$ implies that $\bfo\t x(t) = c(t)$, and the function $c$ fulfils the scalar differential equation
\begin{equation}
\dot c = (\hat m(t) - \hat\mu(t))c(t)
= \left(
m_{\rm sn}\circ b_{\rm sn}\left(
c(t)v\t \begin{pmatrix}
p_A^2 \\ 2p_Ap_a \\ p_a^2
\end{pmatrix}
\right)- \mu_{\rm sn}\left(
c(t)w\t \begin{pmatrix}
p_A^2 \\ 2p_Ap_a \\ p_a^2
\end{pmatrix}
\right)\right) c(t)
\end{equation}
The latter possesses two points of equilibrium, namely 0 and $c_{\rm sn}^*\left(\begin{pmatrix}
p_A^2 \\ 2p_Ap_a \\ p_a^2
\end{pmatrix}\right)$, by definition of the map $c_{\rm sn}^*$ (see Lemma \ref{le44}).
	Due to Assumption \ref{ass2}, one has $m_{\rm sn} (0) > \mu_{\rm sn}(0)$, so the first equilibrium is unstable, while the second one is globally asymptotically stable.
	This achieves the proof of Theorem \ref{theo0}.
\end{proof}

\comment{
From the two proposed inheritance models and using the previous result one can deduce the Verhulst logistic equation \cite{verhulst1838notice} widely used in population dynamics.

Let $ z \in \Rset_{+}  $ be, such that $  z:=\bfo\t x $. Using equation \eqref{eqsf} and defined $v_i:=v_{\rm sn}$, $w_i:=w_{\rm sn}$ for $i = 1,2,3$, one can rewrite equations \eqref{SM0} and \eqref{LM0} as 
\begin{equation}
	\label{z}
	\dot z=z (m_{{\rm sf}}(a^{*}(z)) -\mu_{{\rm sf}}(z))
\end{equation} 
In particular, if one rewrite $ m_{{\rm sf}}(a^{*}(z)):=m_0-m z $ and  $ \mu_{U {\rm sf}}(z):=\mu_0+\mu z $, from \eqref{z} one has 
\begin{equation}
	\label{z0}
	\dot z=z ((m_0-m z)-(\mu_0+\mu z))
\end{equation} 
Therefore, for  $ \dot z=0 $ and $ z^*>0 $ it is satisfied
\begin{equation}
	\label{z*}
	m_0-c_m z^{*}=\mu_0+c_\mu z^{*},
\end{equation} in other words 
\begin{equation}
	z^{*}=\frac{\mu_0-m_0}{\mu+m}.
\end{equation} Defining $ r:=\mu_0-m_0 $, from equation \eqref{z0} one can deduce successively  
\begin{align}
	\label{z1}
	\dot z & = z ((m_0-\mu_0)-z(m +\mu))\\
	\dot z & = z (z^{*}(m +\mu)-z(m +\mu))\\
	\dot z & = (m_0-\mu_0) z (z^{*}-z)(m +\mu)(m_0-\mu_0)^{-1} \\
	\dot z & = (m_0-\mu_0) z (z^{*}-z)\frac{1}{z^{*}} \\
	\dot z & = r z \left(1-\frac{z}{z^{*}}\right), 
\end{align} this is the Verhulst logistic equation.
}

\section{Analysis of the selection case}
\label{se53}

We now state the result that describes the asymptotic behaviour in the {\em dominant} and {\em codominant} cases.

\begin{theo}[Asymptotic convergence to the homozygous equilibrium with higher fitness]
	\label{theoI}
	Assume Assumptions {\rm\ref{ass0}}, {\rm\ref{ass1}}, {\rm\ref{ass2}} and {\rm\ref{ass3}} are fulfilled.
	Then, for any nonzero initial condition, the solution of \eqref{SM0}, resp.\ \eqref{LM0}, satisfies 
	\begin{equation}
	\lim_{t\to +\infty} x(t)=c_1^*e_1
	\end{equation}
	if allele $A$ is initially present, and otherwise
	\begin{equation}
	\lim_{t\to +\infty} x(t)= c_3^* e_3\ .
	\end{equation} 
\end{theo}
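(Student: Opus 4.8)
The plan is to establish the result for polymorphic trajectories (the monomorphic case being already handled by Lemma \ref{le1}, since $c_i^*$ there coincides with the $c_i^*$ in the statement, and the case where only allele $a$ is present is entirely symmetric). Assume allele $A$ is present at time $0$; by Lemma \ref{le15}, if allele $a$ is also present then the trajectory is holomorphic for all $t>0$, so all quotients appearing below are well defined. The strategy is in two stages: first show that the heterozygous and the less-fit homozygous populations $x_2, x_3$ vanish asymptotically, so that the $\omega$-limit set lies on the $e_1$-axis; then conclude by the monomorphic analysis (Lemma \ref{le1}) that convergence to $c_1^*e_1$ follows. The total population being bounded (Theorem \ref{le01}), trajectories are precompact and it suffices to analyze the $\omega$-limit set.

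The core of the first stage is the allelic decomposition \eqref{ady} together with the ordering Lemma \ref{oma}. Consider for system \eqref{SM0} the ratio $\rho(t) := \frac{u_a\t x(t)}{u_A\t x(t)}$ (the relative abundance of allele $a$). From \eqref{adya},
\begin{equation}
\label{rhoev}
\frac{\dot\rho}{\rho} = \big(\tm_{\ref{SM0},a}(x)-\tmu_{\ref{SM0},a}(x)\big) - \big(\tm_{\ref{SM0},A}(x)-\tmu_{\ref{SM0},A}(x)\big)\ .
\end{equation}
By Lemma \ref{oma}, $\tm_{\ref{SM0},A}(x)\geq m_2(b^*(x))\geq \tm_{\ref{SM0},a}(x)$ and $\tmu_{\ref{SM0},A}(x)\leq \mu_2(w\t x)\leq \tmu_{\ref{SM0},a}(x)$, so the right-hand side of \eqref{rhoev} is always $\leq 0$: the allele ratio $\rho$ is nonincreasing, hence converges to some limit $\rho_\infty\geq 0$. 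The claim is that $\rho_\infty=0$. Suppose not; then along the trajectory $\rho$ stays bounded away from $0$ and $\infty$ (it also cannot blow up since it is nonincreasing), which forces the genotype fractions to stay in a compact set of holomorphic states bounded away from the $e_1$- and $e_3$-axes. On such a set, using the strict inequality built into Assumption \ref{ass1} (namely $m_1(b^*(x))-m_3(b^*(x))+\mu_3(w\t x)-\mu_1(w\t x)>0$), one shows the right-hand side of \eqref{rhoev} is bounded above by a strictly negative constant: indeed, the "gaps" $\tm_{\ref{SM0},A}-\tm_{\ref{SM0},a}$ and $\tmu_{\ref{SM0},a}-\tmu_{\ref{SM0},A}$ are, by the explicit formulas in the proof of Lemma \ref{oma}, convex-combination-type expressions that are uniformly positive once $x_1,x_3$ are uniformly positive and $\bfo\t x$ is bounded — this is where one must be careful and is the main obstacle (see below). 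Then $\rho(t)\to 0$, a contradiction, so $\rho_\infty=0$, i.e. $u_a\t x(t)\to 0$, whence $x_2(t)\to 0$ and $x_3(t)\to 0$.

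Once $x_2,x_3\to 0$, the $\omega$-limit set is contained in $\{ce_1 : c\geq 0\}$; it is nonempty, compact and invariant, and on the $e_1$-axis the dynamics \eqref{dotxj} has exactly two equilibria, $0$ and $c_1^*e_1$, with $0$ repelling (Assumption \ref{ass2} gives $m_1(0)>\mu_1(0)$) and $c_1^*e_1$ globally attracting within that axis. To exclude the $\omega$-limit set being $\{0\}$, note that $u_A\t x(t)$ satisfies $\frac{d}{dt}(u_A\t x) = (\tm_{\ref{SM0},A}(x)-\tmu_{\ref{SM0},A}(x))u_A\t x$ with $u_A\t x(0)>0$; near the origin $\tm_{\ref{SM0},A}(x)\to m_1(0)>\mu_1(0)$ (using $b^*(0)=0$ from Lemma \ref{a0} and $m_2\le m_1$, etc., the allelic rate is squeezed toward $m_1(0)$), so $u_A\t x$ cannot tend to $0$; hence the $\omega$-limit set is exactly $\{c_1^*e_1\}$ and $x(t)\to c_1^*e_1$. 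For system \eqref{LM0} the argument is identical, replacing $b^*(x)$ by $b^*(\Aset(x))$ throughout (using $\bfo\t\Aset(x)=\bfo\t x$ and $u_j\t\Aset(x)=u_j\t x$ from Lemma \ref{ppA}), and using the \eqref{LM0}-version of the strict inequality in Assumption \ref{ass1}; the mean-rate formulas \eqref{adyb} and the \eqref{LM0} lines of Lemmas \ref{oma} are designed for exactly this.

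The delicate point — and the step I expect to cost the most work — is making the contradiction quantitative: one must show that on a compact set of holomorphic states with $\bfo\t x$ bounded above and $\rho$ bounded away from $0,\infty$, the net allelic fitness gap in \eqref{rhoev} is bounded above by a negative constant. This requires either (a) $m_1(b^*(x))>m_3(b^*(x))$ strictly, giving a strict gap $\tm_{\ref{SM0},A}>\tm_{\ref{SM0},a}$ proportional to the positive weights on $x_1,x_3$; or (b) equality $m_1=m_3$ on that set but then $\mu_1<\mu_3$ strictly by Assumption \ref{ass1}, giving a strict gap in the $\tmu$'s; Assumption \ref{ass1}'s combined strict inequality guarantees at least one of these holds pointwise, and a compactness/continuity argument upgrades it to a uniform bound. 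Handling the dominant case (where, say, $m_1=m_2=m_3$ and $\mu_1=\mu_2<\mu_3$, so allele $A$ wins purely through lower heterozygote-vs-$aa$ mortality) within the same framework is the subtle sub-case, and one should verify the mean-rate formulas still deliver a uniform negative bound there.
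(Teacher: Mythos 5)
Your overall architecture matches the paper's: monotonicity of the allele ratio $u_a\t x/u_A\t x$ via the mean allelic rates and Lemma \ref{oma}, a contradiction argument to show that this ratio tends to zero, and then reduction to the monomorphic dynamics. But the key step --- upgrading the pointwise strict inequality of Assumption \ref{ass1} to a uniform negative bound on the right-hand side of \eqref{eq6} by compactness --- has a genuine gap, and it sits exactly in the dominant sub-case you flag at the end. The hypothesis that $\rho:=u_a\t x/u_A\t x$ stays bounded away from $0$ and $\infty$ does \emph{not} confine the trajectory to a set on which $x_1$ and $x_3$ are both uniformly positive: a state with $x_3=0$ and $x_2>0$ (or close to the pure heterozygote) has $\rho$ of order one. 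By the explicit decomposition \eqref{eq57}, the two nonnegative gap terms are weighted by $x_1/u_A\t x$ and $x_3/u_a\t x$ respectively; the first weight is bounded below along polymorphic trajectories (this is \eqref{x2c2x1}), but the second is only bounded \emph{above}. In the dominant case $m_1=m_2$, $\mu_1=\mu_2<\mu_3$, the entire negativity of $\dot\rho/\rho$ is carried by the term $\frac{x_3}{u_a\t x}\bigl(m_2(b^*(x))-m_3(b^*(x))+\mu_3(w\t x)-\mu_2(w\t x)\bigr)$, which can tend to $0$ along the trajectory whenever $x_3/u_a\t x\to 0$; hence no uniform negative bound exists on the closure of the set where the trajectory lives, and the contradiction does not follow from integration alone.

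The paper closes this hole with a mechanism your plan does not contain. Assuming $\rho\to\lambda>0$, Barbalat's lemma forces both weighted gap terms to vanish asymptotically (\eqref{eq113}); one then splits according to whether the set of times on which the \emph{first} gap exceeds a fixed threshold has infinite Lebesgue measure. If it does, your integration argument goes through, using the lower bound on $x_1/u_A\t x$. If it does not, one can only conclude that $x_3/u_a\t x\to 0$, hence $x_3\to 0$; a second application of Barbalat's lemma to $\dot x_3$ combined with inspection of \eqref{Sc}, resp.\ \eqref{Lc}, then yields $u_a\t M(x)x\to0$, resp.\ $u_a\t x\to 0$, hence $x_2\to0$ and $\lambda=0$ after all. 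You need this (or an equivalent) extra argument; compactness and continuity alone cannot deliver the uniform bound except in the codominant case where $m_1-m_2+\mu_2-\mu_1>0$ everywhere. The remaining parts of your plan (monomorphic reduction via Lemma \ref{le1}, exclusion of the origin using Assumption \ref{ass2}) are sound, though the paper concludes by a direct sandwich comparison of $\dot x_1$ with the scalar monomorphic equation rather than by invariance of the $\omega$-limit set.
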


Theorem \ref{theoI} states that, provided that the allele $ A $ is initially present, the system converges asymptotically towards the homozygous equilibrium of higher fitness.
When only the allele $ a $ is present, it goes towards the other homozygous nonzero equilibrium, or towards extinction if the latter does not exist.
In both cases, the asymptotic population levels $c_j^*$, $j=1,3$, correspond to the monomorphic equilibria defined in Lemma \ref{monoEqui}.\\

The case of monomorphic trajectories has been already studied in Lemma \ref{le1}.
The proof of Theorem \ref{theoI} in the general case of polymorphic trajectories is conducted in the remaining part of the present section, based on careful study of the evolution of each allele $j=A,a$ in the population, and then of the evolution of each genotypic population $i=1,2,3$.
Central use will be made of the notions of mean allelic rates and of the formulas \eqref{ady}, introduced in Section \ref{se518}.
Most of the demonstration steps below are similar for \eqref{SM0} and \eqref{LM0}, and will be treated altogether for both systems.
For simplicity we drop, whenever possible, the indices referring to the system considered, and simply write the allelic evolution
\begin{equation}
\label{eq662}
u_j\t \dot x
= \left(
\tm_j(x(t)) - \tmu_j(x(t))
\right) u_j\t x(t),\qquad j=a,A\ .
\end{equation}

We first demonstrate in the following result that the ratio of the allelic frequencies evolves in a strictly monotone way.

\begin{lemm}
	\label{le2}
	For any polymorphic trajectory,
	\begin{equation}
	\label{eq6}
	\forall t\geq 0,\qquad \frac{d}{dt} \left(
	\frac{u_a\t x(t)}{u_A\t x(t)}
	\right) = \left(
	\vphantom{\vec{A}}
	\tm_a(x(t)) - \tmu_a(x(t))-\tm_A(x(t))+\ \tmu_A(x(t))
	\right) \left(
	\frac{u_a\t x(t)}{u_A\t x(t)}
	\right) \leq 0\ ,
	\end{equation}
and the previous inequality is {\em strict} for any $t>0$.
\end{lemm}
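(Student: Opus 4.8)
The plan is to compute the derivative of the ratio $\frac{u_a\t x}{u_A\t x}$ directly using the quotient rule together with the allelic evolution formula \eqref{eq662}, and then to establish the sign of the resulting coefficient using the ordering lemma (Lemma~\ref{oma}) and the extra strict-positivity hypotheses in Assumption~\ref{ass1}. The equality in \eqref{eq6} is essentially a one-line calculation: along a polymorphic trajectory both $u_A\t x(t)$ and $u_a\t x(t)$ are strictly positive for $t>0$ (Lemma~\ref{le15}), hence differentiable and nonvanishing, and
\[
\frac{d}{dt}\left(\frac{u_a\t x}{u_A\t x}\right)
= \frac{(u_a\t\dot x)(u_A\t x) - (u_a\t x)(u_A\t\dot x)}{(u_A\t x)^2}
= \bigl(\tm_a(x)-\tmu_a(x) - \tm_A(x) + \tmu_A(x)\bigr)\frac{u_a\t x}{u_A\t x},
\]
substituting \eqref{eq662} for each of $u_A\t\dot x$ and $u_a\t\dot x$. (At $t=0$ one should note the expression still makes sense and is continuous provided $u_A\t x(0)\neq 0$ and $u_a\t x(0)\neq 0$, which is exactly the definition of a polymorphic trajectory; so the formula holds for all $t\geq 0$.)

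For the inequality, I would argue that the coefficient $C(x):=\bigl(\tm_a(x)-\tm_A(x)\bigr) + \bigl(\tmu_A(x)-\tmu_a(x)\bigr)$ is nonpositive, being a sum of two nonpositive terms. Indeed, by Lemma~\ref{oma}, $\tm_A(x)\geq m_2(\cdot)\geq \tm_a(x)$ gives $\tm_a(x)-\tm_A(x)\leq 0$, and $\tmu_A(x)\leq m_2$-analog $\leq \tmu_a(x)$ gives $\tmu_A(x)-\tmu_a(x)\leq 0$; since $u_a\t x/u_A\t x>0$, the product is $\leq 0$. This covers both \eqref{SM0} and \eqref{LM0} simultaneously, since the ordering lemma is stated for both.

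The one genuinely delicate point — the main obstacle — is the \emph{strictness} for $t>0$. Nonpositivity of each summand does not by itself forbid $C(x(t))=0$; this can happen exactly when $\tm_a(x)=\tm_A(x)$ \emph{and} $\tmu_a(x)=\tmu_A(x)$ simultaneously. I would show this forces $x_1=x_3=0$, contradicting the fact that a polymorphic trajectory is holomorphic for $t>0$ (all three components strictly positive, Lemma~\ref{le15}). The argument: writing out $\tmu_A$ and $\tmu_a$ as the explicit convex-type combinations from the proof of Lemma~\ref{oma} (namely $\tmu_A = \frac{\mu_1 x_1 + \frac12\mu_2 x_2}{x_1+\frac12 x_2}$, $\tmu_a = \frac{\mu_3 x_3 + \frac12\mu_2 x_2}{\frac12 x_2 + x_3}$, arguments $w\t x$ suppressed), if $x_1>0$ or $x_3>0$ then $\tmu_A(x)=\tmu_A(x)=\tmu_a(x)$... more precisely: if $\mu_1(w\t x)<\mu_3(w\t x)$ then having both means equal to $\mu_2(w\t x)$ is impossible unless $x_1=x_3=0$; and if $\mu_1(w\t x)=\mu_3(w\t x)$ then, by the strict inequality postulated in the relevant bullet of Assumption~\ref{ass1}, one must instead have $m_1(\cdot)>m_3(\cdot)$, and the analogous computation with $\tm_A,\tm_a$ (using formula \eqref{eqmAR5} or \eqref{eqmAR8}) forces $x_1=x_3=0$. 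Either way the coefficient $C(x(t))$ cannot vanish at a holomorphic state, so $C(x(t))<0$ for $t>0$, giving the strict inequality. I would present this sub-argument carefully, as it is the crux; the commented-out Lemma~\ref{le77} in the source is evidently the author's version of precisely this step and can be reinstated as a short auxiliary claim.
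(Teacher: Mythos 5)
Your treatment of the equality in \eqref{eq6} and of the non-strict inequality (quotient rule, formula \eqref{eq662}, then Lemma \ref{oma}) coincides with the paper's proof. The divergence is in the strictness step, which you correctly identify as the crux. The paper does not argue by contradiction: it regroups the coefficient as
\[
-\bigl(\tm_A(x)-m_2-\tmu_A(x)+\mu_2\bigr)-\bigl(m_2-\tm_a(x)+\tmu_a(x)-\mu_2\bigr),
\]
splitting off the heterozygote rates rather than separating recruitment from mortality as you do, and then uses the explicit identities \eqref{eq57} (resp.\ \eqref{eq901}) to write each nonnegative bracket as a positive prefactor ($x_1/u_A\t x$ and $x_3/u_a\t x$, resp.\ $u_A\t x/\bfo\t x$ and $u_a\t x/\bfo\t x$) times one of the two gaps whose sum Assumption \ref{ass1} makes positive; at least one bracket is then positive at any holomorphic state.

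Your contrapositive version can be made to work, but the intermediate claim ``this forces $x_1=x_3=0$'' is not correct and the commented-out auxiliary lemma cannot simply be reinstated. From $\tmu_A(x)=\mu_2(w\t x)=\tmu_a(x)$ one gets $x_1(\mu_2-\mu_1)=0$ and $x_3(\mu_3-\mu_2)=0$; in a dominance case such as $\mu_1<\mu_2=\mu_3$ this forces only $x_1=0$, not both. Worse, for system \eqref{LM0} the condition $\tm_A(x)=m_2=\tm_a(x)$ involves $\alpha_1(x)$ and $\alpha_3(x)$, i.e.\ $(u_A\t x)^2$ and $(u_a\t x)^2$, which never vanish along a polymorphic trajectory, so it yields $m_1=m_2=m_3$ and no information whatsoever about $x_1,x_3$. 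The correct way to close your argument is the other contradiction: at a holomorphic state $x_1>0$ and $x_3>0$, so vanishing of the coefficient forces all four gaps $m_1-m_2$, $m_2-m_3$, $\mu_2-\mu_1$, $\mu_3-\mu_2$ (at the relevant arguments) to be zero, whence $m_1-m_3+\mu_3-\mu_1=0$, contradicting the strict positivity in the second part of Assumption \ref{ass1} --- rather than contradicting holomorphy, which is what the conclusion $x_1=x_3=0$ was aiming at. With that repair your route is a valid, logically equivalent reformulation of the paper's argument.
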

\begin{proof}
\mbox{}

\noindent $\bullet$
As a consequence of Lemma \ref{le15}, the ratio $\displaystyle\frac{u_a\t x(t)}{u_A\t x(t)}$ is defined for any $t\geq 0$ along a polymorphic trajectory, and it is differentiable with respect to time.
	One has therefore
	\begin{equation}
	\forall t\geq 0,\qquad \frac{d}{dt} \left(
	\frac{u_a\t x(t)}{u_A\t x(t)}
	\right)
	= \left(\frac{u_a\t \dot x}{u_a\t x(t)}-\frac{u_A\t \dot x}{u_A\t x(t)} \right) \frac{u_a\t x(t)}{u_A\t x(t)}
	\end{equation}
which, thanks to \eqref{eq662}, gives the equality part of \eqref{eq6}.
Formulas \eqref{eq102} and \eqref{eq10} provide the non-strict version of \eqref{eq6}.

\noindent $\bullet$
To prove the {\em strict} inequality, consider first system \eqref{SM0}.
One has
\begin{multline}
\label{eq131}
\left(
\vphantom{\hat A}
\tm_{\ref{SM0},A}(x)-\tmu_{\ref{SM0},A}(x) -\tm_{\ref{SM0},a}(x) +\tmu_{\ref{SM0},a}(x)
\right)\\
= \left(
\vphantom{\hat A}
\tm_{\ref{SM0},A}(x)-m_2(b^*(x))-\tmu_{\ref{SM0},A}(x) + \mu_2(w\t x)
\right)
+
\left(
\vphantom{\hat A}
m_2(b^*(x))-\tm_{\ref{SM0},a}(x)+\tmu_{\ref{SM0},a}(x) - \mu_2(w\t x)
\right)
\end{multline} 
and due to Lemma \ref{oma}, each of the two expressions between parentheses in the right-hand side of \eqref{eq131} is {\em nonnegative}.

Moreover, at any polymorphic point, one has (see the definitions in \eqref{eqmA5}):
	\begin{subequations}
		\label{eq57}
		\begin{gather}
		\label{eq57a}
		\hspace{-.3cm}
		\tm_{\ref{SM0},A}(x)-m_2(b^*(x))-\tmu_{\ref{SM0},A}(x) + \mu_2(w\t x)
		= \frac{x_1}{u_A\t x}
		\left(
		\vphantom{\hat A}
		m_1(b^*(x))-m_2(b^*(x))
		+ \mu_2(w\t x)  - \mu_1(w\t x)
		\right)\\
		\label{eq57b}
		\hspace{-.3cm}
		m_2(b^*(x))-\tm_{\ref{SM0},a}(x)+\tmu_{\ref{SM0},a}(x) - \mu_2(w\t x)
		= \frac{x_3}{u_a\t x}
		\left(
		\vphantom{\hat A}
		m_2(b^*(x))-m_3(b^*(x))
		+ \mu_3(w\t x)  - \mu_2(w\t x)
		\right)
		\end{gather}
	\end{subequations}
A fundamental point now is that, due to the fact that $x_1\neq 0$ and $x_3\neq 0$ at any polymorphic point, one deduces from the second part of Assumption \ref{ass1} that {\em at least one of the two nonnegative expressions $m_1(b^*(x))-m_2(b^*(x)) + \mu_2(w\t x)  - \mu_1(w\t x)$ and $m_2(b^*(x))-m_3(b^*(x)) + \mu_3(w\t x)  - \mu_2(w\t x)$ is positive}.

As all genotypes are present along a polymorphic trajectory when $t>0$ (see Lemma \ref{le15}), one gets that, along any polymorphic trajectory, at least one of the two nonnegative expressions
\[
\frac{x_1(t)}{u_A\t x(t)}
		\left(
		\vphantom{\hat A}
		m_1(b^*(x(t)))-m_2(b^*(x(t)))
		+ \mu_2(w\t x(t))  - \mu_1(w\t x(t))
		\right)
\]
and
\[
		\frac{x_3(t)}{u_a\t x(t)}
		\left(
		\vphantom{\hat A}
		m_2(b^*(x(t)))-m_3(b^*(x(t)))
		+ \mu_3(w\t x(t))  - \mu_2(w\t x(t))
		\right)
\]
 is indeed {\em positive} whenever $t>0$.
 This in turn shows that along any polymorphic trajectory of equation \eqref{SM0}, $\left(
\vphantom{\hat A}
\tm_{\ref{SM0},a}(x(t))-\tmu_{\ref{SM0},a}(x(t)) -\tm_{\ref{SM0},A}(x(t)) +\tmu_{\ref{SM0},A}(x(t))
\right) <0$ for any $t> 0$, and thus the strict inequality in \eqref{eq6}.
This achieves the proof of Lemma \ref{le2} in the case of equation \eqref{SM0}.
 
 \noindent $\bullet$
The same argument holds for system \eqref{LM0}.
 The counterpart of the formulas \eqref{eq57} is obtained by noticing that, at any polymorphic point, one has (see \eqref{eqmA8}):
\begin{subequations}
\label{eq901}
		\begin{eqnarray}
		\label{eq1L}
		\nonumber
		\lefteqn{\tm_{\ref{LM0},A}(x) - m_2(b^*(\Aset(x))) - \tmu_{\ref{LM0},A}(x) + \mu_2(w\t x)}\\
		& = &
		\nonumber
		\frac{\alpha_1(x)}{\alpha_1(x)+\frac{1}{2}\alpha_2(x)}
		\left(
		\vphantom{\hat A}
		m_1(b^*(\Aset(x)))-m_2(b^*(\Aset(x)))
		\right) + \frac{x_1}{u_A\t x} ( \mu_2(w\t x)  - \mu_1(w\t x))\\
		& = &
		\label{eq90L}
		\frac{u_A\t x}{\bfo\t x}
		\left(
		\vphantom{\hat A}
		m_1(b^*(\Aset(x)))-m_2(b^*(\Aset(x)))
		\right) + \frac{x_1}{u_A\t x} ( \mu_2(w\t x)  - \mu_1(w\t x))
		\end{eqnarray}
		and similarly
		\begin{multline}
		m_2(b^*(\Aset(x)))-\tm_{\ref{LM0},a}(x)+\tmu_{\ref{LM0},a}(x) - \mu_2(w\t x)\\
		= 			\frac{u_a\t x}{\bfo\t x}
		\label{eq91L}
		\left(
		\vphantom{\hat A}
		m_2(b^*(\Aset(x)))-m_3(b^*(\Aset(x)))
		\right)
		+ \frac{x_3}{u_a\t x} (\mu_3(w\t x)  - \mu_2(w\t x))\ .
		\end{multline}
\end{subequations}
 Using the adequate version of the second part of Assumption \ref{ass1} allows to obtain in the same manner than before the strict inequality in \eqref{eq6} for equation \eqref{LM0}, and finally achieves the proof of Lemma \ref{le2}.
\end{proof}

We gather in the following result a series of estimates related to the genotypic frequencies.

\begin{lemm}
	\label{le3}
	For any polymorphic trajectory,
	there exists $c_1> 0$ such that
	\begin{equation}
	\label{eq14}
	\forall t\geq 0,\qquad u_a\t x(t) \leq c_1 (u_A\t x(t))\ .
	\end{equation}
	Moreover,
	 for any $\varepsilon> 0$, there exist $c_2,c_3\geq 0$ such that $\forall t\geq \varepsilon$
	\begin{gather}
	\label{x2c2x1}
	x_2(t) \leq c_2 x_1(t),\quad x_3(t) \leq c_3 x_2(t),\quad
	\frac{x_1(t)}{u_A\t x(t)}
	\geq \frac{1}{1+\frac{1}{2}c_2},\quad
	\frac{x_3(t)}{u_a\t x(t)}
	\leq \frac{c_3}{c_3+\frac{1}{2}}
	\end{gather}

\end{lemm}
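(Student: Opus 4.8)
The plan is to prove the first inequality directly from Lemma~\ref{le2}, and then to obtain all the remaining bounds for $t\ge\varepsilon$ by elementary differential inequalities for the two ratios $x_2/x_1$ and $x_3/x_2$. For the first part, recall that along a polymorphic trajectory $u_A\t x(t)>0$ for every $t\ge 0$ (at $t=0$ by definition, for $t>0$ by Lemma~\ref{le15}), so $t\mapsto u_a\t x(t)/u_A\t x(t)$ is well defined and, by \eqref{eq6}, non-increasing on $[0,\infty)$; hence $c_1:=u_a\t x(0)/u_A\t x(0)\in(0,+\infty)$ works.

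Next, fix $\varepsilon>0$ and put $r(t):=x_2(t)/x_1(t)$, which by Lemma~\ref{le15} is positive and $C^1$ on $(0,+\infty)$. Differentiating $r$ along \eqref{Sa}--\eqref{Sb} (resp.\ \eqref{La}--\eqref{Lb}) and dropping the nonpositive mortality term $-(\mu_2-\mu_1)r$ (nonpositive since $\mu_1\le\mu_2$ by Assumption~\ref{ass1}) together with a nonnegative prefactor, I would reduce to checking that $2\,u_a\t M(x)x\le r\,u_A\t M(x)x$ for \eqref{SM0} -- and the same with $\Aset(x)$ in place of $M(x)x$ for \eqref{LM0} -- forces $\dot r\le 0$. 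Expanding this, using $r\,x_1=x_2$, the orderings $m_3(b^*(\cdot))\le m_2(b^*(\cdot))\le m_1(b^*(\cdot))$ from \eqref{ee24}, and the bound $x_3\le u_a\t x\le c_1\,u_A\t x$ just proved, the desired inequality holds as soon as $r^2-2c_1r-4c_1\ge 0$, i.e.\ $r\ge c_2^{0}:=c_1+\sqrt{c_1^2+4c_1}$; the finitely many degenerate configurations in which some $m_i(b^*(\cdot))$ (hence the whole recruitment term) vanishes give $\dot r\le 0$ outright. Thus $\dot r(t)\le 0$ whenever $r(t)\ge c_2^{0}$, and a routine no-upward-crossing argument yields $r(t)\le c_2:=\max\{r(\varepsilon),c_2^{0}\}$ for all $t\ge\varepsilon$, which is exactly $x_2\le c_2x_1$ on $[\varepsilon,\infty)$.

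Running the identical scheme on $s(t):=x_3(t)/x_2(t)$ then gives $x_3\le c_3x_2$: differentiating $s$, dropping $-(\mu_3-\mu_2)s$, and using the same ordering of the rates together with the bound $r\le c_2$ just obtained on $[\varepsilon,\infty)$, one finds $\dot s(t)\le 0$ as soon as $s(t)\ge\tfrac14 c_2$, so $s(t)\le c_3:=\max\{s(\varepsilon),\tfrac14 c_2\}$ for $t\ge\varepsilon$. Finally, the last two estimates are purely algebraic: $x_2\le c_2x_1$ and $u_A\t x=x_1+\tfrac12 x_2$ give $u_A\t x\le(1+\tfrac12 c_2)x_1$, hence $\dfrac{x_1}{u_A\t x}\ge\dfrac{1}{1+\frac12 c_2}$; and $x_3\le c_3x_2$ (with $c_3>0$) together with $u_a\t x=\tfrac12 x_2+x_3$ gives $u_a\t x\ge\tfrac{1+2c_3}{2c_3}\,x_3$, hence $\dfrac{x_3}{u_a\t x}\le\dfrac{c_3}{c_3+\frac12}$.

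The computations are routine; the one point that needs care is that the thresholds $c_2^{0}$ and $\tfrac14 c_2$ must depend only on $c_1$ and not on the -- possibly arbitrarily small -- instantaneous rates $m_i(b^*(x(t)))$. This is exactly what the full ordering $m_3\le m_2\le m_1$ (rather than merely $m_1\ge m_3$) delivers; the weaker ordering would not close the argument when $c_1\ge 1$. Apart from this, one must only be careful to establish the $r$-bound first and use it when estimating $\dot s$ on $[\varepsilon,\infty)$, and to dispatch the handful of degenerate cases where a recruitment rate vanishes. Note, incidentally, that unlike the convergence theorem it feeds into, this lemma uses neither Assumption~\ref{ass3} nor the strict part of Assumption~\ref{ass1} (only Lemma~\ref{le2}, which does).
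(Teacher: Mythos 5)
Your proof is correct and follows essentially the same route as the paper's: both arguments rest on the allelic bound $u_a\t x\le c_1\,u_A\t x$ inherited from Lemma \ref{le2}, combined with the orderings of Assumption \ref{ass1}, to establish forward invariance of cones of the form $\{x_2\le c_2x_1\}$ and $\{x_3\le c_3x_2\}$ via a differential comparison. The only difference is packaging: the paper applies a Gr\"onwall estimate to the differences $x_2-c_2x_1$ and $x_2-c_3x_3$ with constants fixed from the data at the initial time (taking $c_2>2c_1$), whereas you show the ratios $x_2/x_1$ and $x_3/x_2$ are non-increasing above explicit thresholds and invoke a no-crossing argument --- both versions are sound, and your derivation of the thresholds checks out.
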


\begin{proof}
\mbox{}

\noindent $\bullet$
	Formula \eqref{eq14} comes as direct consequence of Lemma \ref{le2}.
	As a matter of fact, integrating \eqref{eq6} gives
	\begin{equation}
	\label{eq11}
	\forall t\geq 0,\qquad
	\left(
	\frac{u_a\t x(t)}{u_A\t x(t)}
	\right)
	= e^{\int_0^t \left( \tm_a(x(t)) - \tmu_A(x(t))-\tm_A(x(t))+\ \tmu_A(x(t))\right) dt} \left(
	\frac{u_a\t x(0)}{u_A\t x(0)}
	\right)
	\leq \left(
	\frac{u_a\t x(0)}{u_A\t x(0)}
	\right)
	\end{equation} and therefore
	
	\begin{equation}
	\forall t\geq 0,\qquad u_a\t x(t) \leq \frac{u_a\t x(0)}{u_A\t x(0)}
	(u_A\t x(t))
	:= c_1 (u_A\t x(t))\ .
	\end{equation}

\noindent $\bullet$
Consider first system \eqref{SM0}.
	For any polymorphic trajectory one has $x_1(\varepsilon)> 0$ for any $\varepsilon> 0$, see the proof of Lemma \ref{le15}.
	Within the present demonstration, one assumes for simplicity that $\varepsilon$ may be taken as zero, i.e.\ $x_1(0)> 0$.
	Choose then positive constants $c_2, c_3$ such that
	\begin{equation}
	\label{eq8}
	c_2 > \max\left\{
	\frac{x_2(0)}{x_1(0)}; 2c_1
	\right\},\qquad
	c_3 < \max\left\{
	\frac{x_2(0)}{x_3(0)};\frac{2}{c_1}
	\right\}
	\end{equation}
	where $c_1$ is the positive constant in \eqref{eq14}.
	One deduces successively from Assumption \ref{ass1} and \eqref{eq14} that
	\begin{eqnarray}
	\label{c1M}
	u_a\t M(x)x
	& = & 
	\nonumber
	\frac{1}{2} m_2(b^*(x))x_2+ m_3(b^*(x))x_3\\
	& \leq &
	\nonumber 
	\frac{1}{2} m_2(b^*(x))x_2+ m_2(b^*(x))x_3\\
	& = &
	\nonumber 
	m_2(b^*(x))(u_a\t x)\\
	& \leq &
	\nonumber 
	c_1 m_2(b^*(x))(u_A\t x)\\
	& = &
	\nonumber
	c_1 m_2(b^*(x))\left(
	x_1+\frac{1}{2} x_2
	\right)\\
	& \leq &
	\nonumber
	c_1\left(
	m_1(b^*(x))x_1+\frac{1}{2} m_2(b^*(x))x_2
	\right)\\
	& = &c_1(u_A\t M(x)x)
	\end{eqnarray}
	Using by turns \eqref{Sb}, \eqref{c1M}, \eqref{eq8} and \eqref{Sa}, one deduces
	\begin{eqnarray}
	\dot{x}_2
	& = &
	\nonumber
	\frac{2}{\bfo\t M(x)x} (u_A\t M(x)x)(u_a\t M(x)x) - \mu_2(w\t x) x_2\\
	& \leq &
	\nonumber
	2c_1\frac{1}{\bfo\t M(x)x} (u_A\t M(x)x)^2 - \mu_2(w\t x) x_2\\
	& \leq &
	\nonumber
	c_2 \frac{1}{\bfo\t M(x)x} (u_A\t M(x)x)^2 - \mu_2(w\t x) x_2\\
	& = &
	\label{eq772}
	c_2  \left(
	\dot x_1+\mu_1(w\t x) x_1
	\right) - \mu_2(w\t x) x_2
	\end{eqnarray}
	Therefore,
	\begin{eqnarray}
	\dot x_2 -  c_2 \dot x_1
	& \leq &
	\nonumber
	c_2 \mu_1(w\t x) x_1 - \mu_2(w\t x) x_2\\
	& = &
	\nonumber
	- \mu_1(w\t x) \left(
	x_2-c_2x_1
	\right) +\left(
	\mu_1(w\t x) - \mu_2(w\t x)
	\right)x_2\\
	& \leq &
	\label{eq20}
	- \mu_1(w\t x) \left(
	x_2-c_2x_1
	\right)
	\end{eqnarray}
	where the last inequality has been deduced from the fact that $  \mu_2 \geq \mu_1  $, see Assumption \ref{ass1}.
	Integrating inequality \eqref{eq20} and using \eqref{eq8} yields for any $t\geq 0$:
	\begin{equation}
	\forall t\geq 0,\qquad
	x_2(t)-c_2x_1(t)
	\leq e^{-\int_0^t \mu_1(w\t x(s)) ds} \left(
	x_2(0)-c_2x_1(0)
	\right)
	\leq 0
	\end{equation}
	and the first part of \eqref{x2c2x1}.
	
	On the other hand, one also deduces from \eqref{Sb}, \eqref{c1M}, \eqref{Sc} and the fact that $\mu_3\geq \mu_2$,
\begin{eqnarray}
		\dot{x}_2
		& = &
		\nonumber
		\frac{2}{\bfo\t M(x)x} (u_A\t M(x)x)(u_a\t M(x)x) - \mu_2(w\t x) x_2\\
		& \geq &
		\nonumber
		\frac{2}{c_1}\frac{1}{\bfo\t M(x)x} (u_a\t M(x)x)^2 - \mu_2(w\t x) x_2\\
		& \geq &
		\nonumber
		c_3\frac{1}{\bfo\t M(x)x} (u_a\t M(x)x)^2 - \mu_2(w\t x) x_2\\
		& = &
		\nonumber
		c_3(\dot x_3+ \mu_3(w\t x) x_3) - \mu_2(w\t x) x_2\\
		& \geq &
		\nonumber
		c_3\dot x_3+ \mu_3(w\t x) \left(
		c_3 x_3- x_2
		\right)
\end{eqnarray}
	Therefore
	\begin{equation}
	\label{eq775}
	\dot x_2 - c_3\dot x_3 \geq - \mu_3(w\t x) \left(
	x_2 - c_3 x_3
	\right)
	\end{equation}
	which, taking into account the fact that $c_3 < \displaystyle\frac{x_2(0)}{x_3(0)}$, yields
	\begin{equation}
	\forall t\geq 0,\qquad
	x_2(t) - c_3 x_3(t) \geq 0\ .
	\end{equation}
This proves the second inequality in \eqref{x2c2x1}.
	The last two inequalities come as a consequence, using the fact that
	\[
	\frac{x_1(t)}{u_A\t x(t)} = \frac{x_1(t)}{x_1(t)+ \frac{1}{2}x_2(t)},\qquad
	\frac{x_3(t)}{u_a\t x(t)} = \frac{x_3(t)}{x_3(t)+ \frac{1}{2}x_2(t)}\ .
	\]
This concludes the proof of Lemma \ref{le3} in the case of system \eqref{SM0}.

\noindent $\bullet$
The proof is conducted similarly for system \eqref{LM0}.
We just quote here step by step the differences in the computations.
The analogue of formulas \eqref{eq772} and  \eqref{eq775} is proved as follows.
	Using \eqref{Lb}, Assumption \ref{ass1}, \eqref{eq14} and \eqref{La}, one deduces that
	\begin{eqnarray}
	\dot{x}_2
	& = &
	\nonumber
	\frac{2}{\bfo\t x} (u_A\t x)(u_a\t x)m_2(b^*(\Aset(x))) - \mu_2(w\t x) x_2\\
	& \leq &
	\nonumber
	2c_1\frac{(u_A\t x)^2}{\bfo\t x} m_1(b^*(\Aset(x))) - \mu_2(w\t x) x_2\\
	& \leq &
	\nonumber
	c_2 \frac{(u_A\t x)^2}{\bfo\t x} m_1(b^*(\Aset(x))) - \mu_2(w\t x) x_2\\
	& = &
	c_2  \left(
	\dot x_1+\mu_1(w\t x) x_1
	\right) - \mu_2(w\t x) x_2
	\end{eqnarray}
and therefore
	\begin{eqnarray}
	\dot x_2 -  c_2 \dot x_1
	& \leq &
	\nonumber
	c_2 \mu_1(w\t x) x_1 - \mu_2(w\t x) x_2\\
	& = &
	\nonumber
	- \mu_1(w\t x) \left(
	x_2-c_2x_1
	\right) +\left(
	\mu_1(w\t x) - \mu_2(w\t x)
	\right)x_2\\
	& \leq &
	\label{eq20L}
	- \mu_1(w\t x) \left(
	x_2-c_2x_1
	\right)
	\end{eqnarray}
	
	On the other hand, the fact that  $m_3\leq m_2$ and $\mu_3\geq \mu_2$ yields
	\begin{eqnarray}
	\dot{x}_2
	& = &
	\nonumber
	\frac{2}{\bfo\t x} (u_A\t x)(u_a\t x)m_2(b^*(\Aset(x))) - \mu_2(w\t x) x_2\\
	& \geq &
	\nonumber
	\frac{2}{c_1}\frac{(u_a\t x)^2}{\bfo\t x}m_3(b^*(\Aset(x))) - \mu_2(w\t x) x_2\\
	& \geq &
	\nonumber
	\frac{1}{c_3}\frac{(u_a\t x)^2}{\bfo\t x} m_3(b^*(\Aset(x))) - \mu_2(w\t x) x_2\\
	& = &
	\nonumber
	\frac{1}{c_3}(\dot x_3+ \mu_3(w\t x) x_3) - \mu_2(w\t x) x_2\\
	& \geq &
	\nonumber
	\frac{1}{c_3}\dot x_3+ \mu_3(w\t x) \left(
	\frac{1}{c_3} x_3- x_2
	\right)
	\end{eqnarray}
	which is the counterpart of \eqref{eq775}.
	The other steps of the proof are similar to the case of system \eqref{SM0}, and not repeated for the sake of space.
	This concludes the proof of Lemma \ref{le3}.
\end{proof}

We are now in position to show that the ratio of the allelic frequencies not only decreases, but also vanishes asymptotically.

\begin{lemm}
	\label{le5}
	For any polymorphic trajectory,
	\begin{equation}
	\label{eq112}
	\lim_{t\to +\infty} \frac{u_a\t x(t)}{u_A\t x(t)} = 0\ .
	\end{equation}
\end{lemm}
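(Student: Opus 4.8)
The plan is to argue by contradiction, exploiting the monotonicity established in Lemma~\ref{le2}. Along a polymorphic trajectory the ratio $r(t):=u_a\t x(t)/u_A\t x(t)$ is well defined for every $t>0$ (Lemma~\ref{le15}), nonincreasing, and bounded below by $0$, so $r(t)\downarrow r_\infty\geq 0$; it suffices to exclude $r_\infty>0$. Two facts will be used throughout. First, the trajectory is bounded (Theorem~\ref{le01} with Assumption~\ref{ass3}), so its $\omega$-limit set $\Omega$ is nonempty, compact and positively invariant. Second, the proof of Lemma~\ref{le2} shows that $\frac{d}{dt}\ln r=-(E_1+E_2)$, where $E_1,E_2\geq 0$ are the two bracketed terms decomposing $\tm_A-\tmu_A-\tm_a+\tmu_a$ in \eqref{eq57} for \eqref{SM0} (in \eqref{eq901} for \eqref{LM0}), and that $E_1+E_2>0$ at \emph{every holomorphic state} (by Lemma~\ref{oma} and Assumption~\ref{ass1}, since there $x_1,x_3>0$).

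Assuming $r_\infty>0$, I would first show $\Omega$ contains no polymorphic point. Since $r$ is continuous on polymorphic states and $r(x(t))\to r_\infty$, one gets $r\equiv r_\infty$ on every polymorphic point of $\Omega$; but if $y\in\Omega$ were polymorphic, its forward orbit would lie in $\Omega$ and, by Lemma~\ref{le15}, be holomorphic for positive times, so along it $r\equiv r_\infty$ while $\frac{d}{dt}r<0$ — impossible. Hence every point of $\Omega$ is non-polymorphic, i.e.\ of the form $ce_1$ or $ce_3$ with $c\geq 0$. A point $ce_3$ with $c>0$ in $\Omega$ is impossible because $r$ would blow up along a sequence $x(t_k)\to ce_3$ whereas $r(t)\leq r(0)<+\infty$; a point $ce_1$ with $c>0$ is impossible because then $r(t_k)\to 0\neq r_\infty$. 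Therefore $\Omega=\{0_3\}$, i.e.\ $x(t)\to 0_3$.

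It then remains to contradict $x(t)\to 0_3$ under $r_\infty>0$. As $x(t)\to 0_3$ one has $b^*(x(t))\to 0$ and $w\t x(t)\to 0$ (Lemma~\ref{a0}), hence $m_i(b^*(x(t)))\to m_i(0)$, $\mu_i(w\t x(t))\to\mu_i(0)$ (and, for \eqref{LM0}, also $\Aset(x(t))\to 0_3$ and $b^*(\Aset(x(t)))\to 0$). Put $a^0:=m_1(0)-m_2(0)+\mu_2(0)-\mu_1(0)\geq 0$. If $a^0>0$: bounding $\frac{d}{dt}\ln r\leq -E_1$ from below via the lower bounds $x_1/u_A\t x\geq\beta_1>0$ and $u_A\t x/\bfo\t x\geq\gamma_1>0$ coming from Lemma~\ref{le3}, one finds $\frac{d}{dt}\ln r(t)\leq -\kappa\,a^0+o(1)$ for some constant $\kappa>0$, so $r(t)\to 0$, contradicting $r_\infty>0$. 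If $a^0=0$: then $m_1(0)=m_2(0)$ and $\mu_1(0)=\mu_2(0)$, so $m_2(0)=m_1(0)>\mu_1(0)=\mu_2(0)\geq 0$ by Assumption~\ref{ass2}; since $\tm_A(x)$ and $\tmu_A(x)$ are convex combinations of $m_1,m_2$ (resp.\ $\mu_1,\mu_2$) evaluated at arguments tending to $0$, the identity $\frac{d}{dt}\ln(u_A\t x)=\tm_A(x)-\tmu_A(x)$ from \eqref{ady} gives $\frac{d}{dt}\ln(u_A\t x(t))\to m_2(0)-\mu_2(0)>0$, so $u_A\t x(t)\to+\infty$, contradicting $u_A\t x(t)\leq\bfo\t x(t)\to 0$. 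In all cases $r_\infty>0$ fails, which proves Lemma~\ref{le5}.

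The crux — and the reason for routing the argument through $\Omega$ rather than simply bounding $\dot r$ away from $0$ — is that the factor $x_3/u_a\t x$ entering $E_2$ is not bounded below a priori (in contrast with $x_1/u_A\t x$ in $E_1$), so the drift of the trajectory toward the origin is exactly the scenario that cannot be ruled out cheaply, and there the viability Assumption~\ref{ass2} is indispensable. Systems \eqref{SM0} and \eqref{LM0} are treated in parallel throughout, using \eqref{eq57} resp.\ \eqref{eq901} and replacing $b^*(x)$ by $b^*(\Aset(x))$ where needed.
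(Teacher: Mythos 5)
Your proof is correct, but it follows a genuinely different route from the paper's. The paper keeps the same decomposition $-(E_1+E_2)$ of $\frac{d}{dt}\ln\left(u_a\t x/u_A\t x\right)$ but argues analytically: Barbalat's lemma forces both nonnegative terms to vanish asymptotically, and a dichotomy on the Lebesgue measure of the time spent in the set $X$ where $m_1-m_2+\mu_2-\mu_1>\zeta/2$ either kills the ratio by direct integration (using the lower bound on $x_1/u_A\t x$ from Lemma \ref{le3}), or, in the complementary case, forces the cascade $x_3/u_a\t x\to 0$, $x_3\to 0$, $\dot x_3\to 0$ (Barbalat again), $x_2\to 0$, hence $\lambda=0$. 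You instead run a LaSalle-type argument: strict decrease of the ratio at every holomorphic state, combined with invariance of the $\omega$-limit set $\Omega$, expels all polymorphic points from $\Omega$; boundedness of the ratio and its convergence to a positive limit then exclude the two monomorphic rays, leaving only $x(t)\to 0_3$, which you rule out near the origin by splitting on whether $m_1(0)-m_2(0)+\mu_2(0)-\mu_1(0)$ is positive (geometric decay of the ratio via the lower bound on $x_1/u_A\t x$) or zero (growth of $u_A\t x$ via Assumption \ref{ass2} and the convex-combination expression of $\tm_A-\tmu_A$). Your version avoids Barbalat and the measure-theoretic bookkeeping, and it makes explicit where the viability Assumption \ref{ass2} enters — namely to exclude extinction of a polymorphic population — a point the paper's second case leaves implicit (its passage from $x_2,x_3\to 0$ to $\lim u_a\t x/\bfo\t x=0$ tacitly needs $\bfo\t x$ not to vanish). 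What the paper's route buys in exchange is the quantitative information recorded in the remark following the lemma: when the gap $m_1-m_2+\mu_2-\mu_1$ is uniformly positive, \eqref{eq115} always holds and the convergence in \eqref{eq112} is exponential, which is read off directly from the integrated inequality but is not visible from the $\omega$-limit argument.
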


\begin{proof}
We consider in the whole demonstration a given polymorphic trajectory --- either of system \eqref{SM0} or of system \eqref{LM0} according to the context.

The ratio of allelic frequencies being decreasing  (as a consequence of Lemma \ref{le2}), there exists a nonnegative scalar $\lambda$ such that
\begin{equation}
\label{eq110}
	\lim_{t\to +\infty} \frac{u_a\t x(t)}{u_A\t x(t)} = \lambda\ .
\end{equation}
Due to the fact that $u_a\t x+u_A\t x=\bfo\t x$, one may deduce from \eqref{eq110} that the allelic frequencies also converge, namely:
\begin{equation}
\label{eq111}
	\lim_{t\to +\infty} \frac{u_a\t x(t)}{\bfo\t x(t)} = \frac{\lambda}{1+\lambda},\qquad
	\lim_{t\to +\infty} \frac{u_A\t x(t)}{\bfo\t x(t)} = \frac{1}{1+\lambda}\ .
\end{equation}

We assume by contradiction that
\begin{equation}
\label{eq119}
\lambda >0 \ .
\end{equation}
Our aim is to show that \eqref{eq119} is wrong, i.e.\ that $\lambda=0$.\\

\noindent $\bullet$
By Theorem \ref{le01}, the trajectories are (uniformly) bounded.
Therefore, by compactness, Assumption \ref{ass1} guarantees the existence of a certain $\zeta>0$ such that
\begin{subequations}
\label{eq144}
\begin{equation}
\forall t>0,\qquad m_1(b^*(x(t)))-m_3(b^*(x(t)))+\mu_3(w\t x(t))- \mu_1(w\t x(t))>\zeta
\end{equation}
 for system \eqref{SM0}; and
\begin{equation}
\forall t>0,\qquad m_1(b^*(\Aset(x(t))))-m_3(b^*(\Aset(x(t))))+\mu_3(w\t x(t))- \mu_1(w\t x(t))>\zeta
\end{equation}
for system \eqref{LM0}.
\end{subequations}
For the considered trajectory and the corresponding value $\zeta$, let the set $X$ be defined as
\begin{subequations}
\label{eq114}
\begin{equation}
\label{eq114a}
X := \left\{
x\in\Rset_+^3 :\ m_1(b^*(x))-m_2(b^*(x)) + \mu_2(w\t x)  - \mu_1(w\t x) > \frac{\zeta}{2}
\right\}
\end{equation}
for the case of system \eqref{SM0}, and as
\begin{equation}
\label{eq114b}
X := \left\{
x\in\Rset_+^3 :\ m_1(b^*(\Aset(x)))-m_2(b^*(\Aset(x)))+\mu_2(w\t x)- \mu_1(w\t x) > \frac{\zeta}{2}
\right\}
\end{equation}
\end{subequations}
for system \eqref{LM0}.
Notice that, due to \eqref{eq144},
\begin{subequations}
\label{eq117}
\begin{equation}
\label{eq117a}
x(t) \not\in X\ \Rightarrow\
m_2(b^*(x(t)))-m_3(b^*(x(t))) + \mu_3(w\t x(t))  - \mu_2(w\t x(t)) > \frac{\zeta}{2}
\end{equation}
for \eqref{SM0}, and for \eqref{LM0}:
\begin{equation}
\label{eq117b}
x(t) \not\in X\ \Rightarrow\
m_2(b^*(\Aset(x(t))))-m_3(b^*(\Aset(x(t))))+\mu_3(w\t x(t))- \mu_2(w\t x(t)) > \frac{\zeta}{2}\ .
\end{equation}
\end{subequations}

Now, observe that the derivative of $\displaystyle\frac{u_a\t x}{u_A\t x}$ appears in \eqref{eq6} as a {\em locally Lipschitz function} of the state variable $x$; and that along any polymorphic trajectory, the latter 
is {\em uniformly bounded} with {\em uniformly bounded time derivative}.
From this, we may deduce that this derivative is {\em uniformly continuous} with respect to the time variable.
 As the convergence property \eqref{eq110} holds, Barbalat's lemma \cite{Barbalat:1959aa,Farkas:2016aa}, establishes that the derivative converges to zero when $t\to +\infty$.
Inserting in the expression of the latter (see \eqref{eq6}), the decomposition in two nonnegative terms obtained in \eqref{eq131} and \eqref{eq57}  for \eqref{SM0}, and in \eqref{eq901} for \eqref{LM0}, one concludes by use of the hypothesis \eqref{eq119}, that
\begin{subequations}
\label{eq113}
\begin{gather}
\label{eq113a}
\lim_{t\to +\infty}
\frac{x_1(t)}{u_A\t x(t)}
		\left(
		\vphantom{\hat A}
		m_1(b^*(x(t)))-m_2(b^*(x(t)))
		+ \mu_2(w\t x(t))  - \mu_1(w\t x(t))
		\right)
= 0\\
\label{eq113b}
\lim_{t\to +\infty}
		\frac{x_3(t)}{u_a\t x(t)}
		\left(
		\vphantom{\hat A}
		m_2(b^*(x(t)))-m_3(b^*(x(t)))
		+ \mu_3(w\t x(t))  - \mu_2(w\t x(t))
		\right)
= 0
\end{gather}
\end{subequations}

\noindent $\bullet$
Assume first that, for the considered polymorphic trajectory and the set $X$ defined in \eqref{eq114},
\begin{equation}
\label{eq115}
\meas\ \left\{
t > 0\ :\ x(t) \in X
\right\} = +\infty\ ,
\end{equation}
where $\meas$ denotes the Lebesgue  measure.
				
Consider primarily the case of system \eqref{SM0}.
One derives from \eqref{eq57}, \eqref{x2c2x1} and the definition of $X$ in \eqref{eq114}, that
\begin{eqnarray}
\nonumber
\lefteqn{\tm_{\ref{SM0},a}(x(t)) - \tmu_{\ref{SM0},a}(x(t))-\tm_{\ref{SM0},A}(x(t))+\ \tmu_{\ref{SM0},A}(x(t))}\\
& \leq &
\nonumber
- \frac{x_1(t)}{u_A\t x(t)}
\left(
\vphantom{\hat A}
m_1(b^*(x(t)))-m_2(b^*(x(t))) + \mu_2(w\t x(t))  - \mu_1(w\t x(t))
\right)\\
& \leq &
\label{eq116}
- \frac{1}{1+\frac{1}{2}c_2} \frac{\zeta}{2}\ \chi_{x^{-1}(X)} (t)
\end{eqnarray}
where by definition the characteristic function $\chi_{x^{-1}(X)}(t)$ is equal to $1$ if $x(t)\in X$, $0$ otherwise.
Integrating now this inequality as in \eqref{eq11}, one gets for any $t\geq 0$,
\begin{equation}
	\left(
	\frac{u_a\t x(t)}{u_A\t x(t)}
	\right)
\leq \left(
\frac{u_a\t x(0)}{u_A\t x(0)}
\right)
\exp\left(
-\frac{1}{1+\frac{1}{2}c_2} \frac{\zeta}{2}\ \meas\ \left\{
s\in (0,t)\ :\ x(s) \in X
\right\}
\right)
\end{equation}
Due to the hypothesis made in \eqref{eq115}, the previous expression converges towards $0$ when $t\to +\infty$.
Therefore $\lambda = \displaystyle\lim_{t\to +\infty} \frac{u_a\t x(t)}{u_A\t x(t)} = 0$, which contradicts \eqref{eq119}.
This latter formula is thus wrong, which establishes \eqref{eq112} by contradiction.

The case of \eqref{LM0} is quite similar.
Due to \eqref{eq14}, one has for any $t\geq 0$,
\begin{equation}
\frac{u_A\t x(t)}{\bfo\t x(t)}  \geq \frac{1}{1+c_1}\qquad\text{ and }\qquad
\frac{u_a\t x(t)}{\bfo\t x(t)}  \leq \frac{c_1}{1+c_1}\ .
\end{equation}
From \eqref{eq90L}, one derives here
\begin{eqnarray}
\nonumber
\lefteqn{\tm_{\ref{LM0},a}(x(t)) - \tmu_{\ref{LM0},a}(x(t))-\tm_{\ref{LM0},A}(x(t))+\ \tmu_{\ref{LM0},A}(x(t))}\\
& \leq &
\nonumber
-\frac{u_A\t x}{\bfo\t x}
\left(
\vphantom{\hat A}
m_1(b^*(\Aset(x)))-m_2(b^*(\Aset(x)))
\right)
- \frac{x_1}{u_A\t x} ( \mu_2(w\t x)  - \mu_1(w\t x))\\
& \leq &
\nonumber
- \frac{1}{1+c_1}
\left(
\vphantom{\hat A}
m_1(b^*(\Aset(x)))-m_2(b^*(\Aset(x)))
\right)
- \frac{1}{1+\frac{1}{2}c_2} ( \mu_2(w\t x)  - \mu_1(w\t x))\\
& \leq &
-\min\left\{
\frac{1}{1+c_1}; \frac{1}{1+\frac{1}{2}c_2}
\right\}
\frac{\zeta}{2}\ \chi_{x^{-1}(X)} (t)
\end{eqnarray}
which is analogous to \eqref{eq116}.
The demonstration is then conducted in the same way and yields similarly $\lambda=0$.
By contradiction with \eqref{eq119}, this shows identity \eqref{eq112}.

\noindent $\bullet$
We now treat the case where \eqref{eq115} does not hold, that is:
\begin{equation}
\label{eq120}
\meas\ \left\{
t > 0\ :\ x(t) \in X
\right\} < +\infty\ .
\end{equation}
It is not possible to use here the same argument than previously, because the quantity $\displaystyle\frac{x_3(t)}{u_a\t x(t)}$ is bounded {\em from above}, contrary to $\displaystyle\frac{x_1(t)}{u_A\t x(t)}$ which is bounded {\em from below}, see \eqref{x2c2x1}.
 The measure of the set $\left\{
t > 0\ :\ x(t) \not\in X
\right\}$ is now infinite, and as a consequence of \eqref{eq117}, one deduces that
\begin{subequations}
\label{eq118}
\begin{equation}
\label{eq118a}
\meas\left\{
t\geq 0\ :\ m_2(b^*(x(t)))-m_3(b^*(x(t))) + \mu_3(w\t x(t))  - \mu_2(w\t x(t)) > \frac{\zeta}{2}
\right\} = +\infty
\end{equation}
for \eqref{SM0}, and for \eqref{LM0}:
\begin{equation}
\label{eq118b}
\meas\left\{
t\geq 0\ :\ m_2(b^*(\Aset(x(t))))-m_3(b^*(\Aset(x(t))))+\mu_3(w\t x(t))- \mu_2(w\t x(t)) > \frac{\zeta}{2}
\right\} = +\infty\ .
\end{equation}
\end{subequations}

From \eqref{eq113b} and \eqref{eq118}, one obtains here that necessarily:
\begin{equation}
\label{eq121a}
\lim_{t\to +\infty} \frac{x_3(t)}{u_a\t x(t)} = 0\ ,
\end{equation}
and thus
\begin{equation}
\label{eq121b}
\lim_{t\to +\infty} x_3(t) = 0\ .
\end{equation}

In view of the equations \eqref{Sc} for system \eqref{SM0} and \eqref{Lc} for system \eqref{LM0}, one deduces by invoking newly Barbalat's lemma that
\begin{equation}
\label{eq122}
\lim_{t\to +\infty} \dot{x}_3(t) = 0\ .
\end{equation}

By considering again the right-hand side of \eqref{Sc}, resp.\ \eqref{Lc}, one then gets from \eqref{eq121b} and \eqref{eq122} that
 \begin{equation}
\label{eq123a}
\lim_{t\to +\infty} u_a\t M(x(t)) x(t) = 0,\qquad
\text{ resp.\ }
\lim_{t\to +\infty} u_a\t x(t) = 0\ ,
\end{equation}
and therefore
 \begin{equation}
\label{eq123b}
\lim_{t\to +\infty} x_2(t) = 0\ .
\end{equation}

But \eqref{eq121b} and \eqref{eq123b} together yield
 \begin{equation}
\label{eq130}
\lambda = \lim_{t\to +\infty} \frac{u_a\t x(t)}{\bfo\t x(t)} = 0\ ,
\end{equation}
which contradicts \eqref{eq119}.
The latter is therefore wrong.
This establishes \eqref{eq112} in the case where \eqref{eq120} holds, and finally concludes the proof of Lemma \ref{le5}.
\end{proof}
 
As a remark, notice that using the techniques in the proof of Lemma \ref{le5}, one may show that the convergence is {\em exponential} in \eqref{eq112} whenever the following stronger form of Assumption \ref{ass1} holds: for any $x\in\Rset_+^3$,
$m_1(b^*(x))-m_2(b^*(x))+\mu_2(w\t x)- \mu_1(w\t x)>0$ for system \eqref{SM0}, or $m_1(b^*(\Aset(x)))-m_2(b^*(\Aset(x)))+\mu_2(w\t x)- \mu_1(w\t x)>0$ for system \eqref{LM0}.
Indeed \eqref{eq115} always holds in such cases.
On the contrary, there is no indication that the same property holds when $m_2(b^*(x))-m_3(b^*(x))+\mu_3(w\t x)- \mu_2(w\t x)>0$ for system \eqref{SM0}, or $m_2(b^*(\Aset(x)))-m_3(b^*(\Aset(x)))+\mu_3(w\t x)- \mu_2(w\t x)>0$ for system \eqref{LM0}.

Lemma \ref{le5} is sufficient to assert the asymptotic of the genotypic relative frequencies in \eqref{SM0} and \eqref{LM0}, as shown now.

\begin{lemm}
	\label{le55}
	For any polymorphic trajectory,
	\begin{equation}
	\label{eq145}
	\lim_{t\to +\infty} \frac{x_1(t)}{\bfo\t x(t)} = 1,\qquad
	\lim_{t\to +\infty} \frac{x_2(t)}{\bfo\t x(t)} = \lim_{t\to +\infty} \frac{x_3(t)}{\bfo\t x(t)} = 0\ .
	\end{equation}
\end{lemm}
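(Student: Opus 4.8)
The plan is to obtain the statement as an almost immediate corollary of Lemma~\ref{le5}, relying only on the fixed linear relations that tie the genotypic counts to the allelic counts. First I would note that along a polymorphic trajectory $\bfo\t x(t)>0$ for every $t\geq 0$: at $t=0$ this holds because $u_A\t x(0)\neq 0$ forces $x_1(0)+\frac{1}{2} x_2(0)>0$, and for $t>0$ it holds because all coordinates of $x(t)$ are positive by Lemma~\ref{le15}. Hence all the quotients $x_i(t)/\bfo\t x(t)$ and $u_j\t x(t)/\bfo\t x(t)$ appearing in the statement and in the argument below are well defined.

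Next, since $u_A+u_a=\bfo$ (see \eqref{vec}), one has $u_A\t x+u_a\t x=\bfo\t x$, so that
\begin{equation}
\frac{u_a\t x(t)}{\bfo\t x(t)} = \frac{\,u_a\t x(t)/u_A\t x(t)\,}{1+u_a\t x(t)/u_A\t x(t)}\ .
\end{equation}
By Lemma~\ref{le5} the right-hand side tends to $0$, which yields $u_a\t x(t)/\bfo\t x(t)\to 0$ and therefore $u_A\t x(t)/\bfo\t x(t)\to 1$; this is exactly \eqref{eq111} with $\lambda=0$.

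Finally I would unpack $u_a\t x=x_3+\frac{1}{2} x_2$. Because $x_2(t),x_3(t)\geq 0$, we have the two elementary bounds
\begin{equation}
0\ \leq\ \frac{x_3(t)}{\bfo\t x(t)}\ \leq\ \frac{u_a\t x(t)}{\bfo\t x(t)},\qquad
0\ \leq\ \frac{x_2(t)}{2\,\bfo\t x(t)}\ \leq\ \frac{u_a\t x(t)}{\bfo\t x(t)}\ ,
\end{equation}
so the squeeze theorem gives $x_3(t)/\bfo\t x(t)\to 0$ and $x_2(t)/\bfo\t x(t)\to 0$; then $x_1(t)/\bfo\t x(t)=1-x_2(t)/\bfo\t x(t)-x_3(t)/\bfo\t x(t)\to 1$, which is \eqref{eq145}. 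There is no real obstacle here: the whole content lies in Lemma~\ref{le5}, and the only point deserving a line of care is the non-vanishing of $\bfo\t x(t)$, which legitimises the manipulation of the quotients.
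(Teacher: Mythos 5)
Your proof is correct and follows essentially the same route as the paper: both arguments reduce the claim to Lemma \ref{le5} by observing that $\frac{1}{2}x_2$ and $x_3$ are each dominated by $u_a\t x$, so the corresponding genotypic frequencies are squeezed to zero; your intermediate passage through $u_a\t x(t)/\bfo\t x(t)\to 0$ is only a cosmetic rearrangement of the paper's direct lower bound on $u_a\t x/u_A\t x$.
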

\begin{proof}
	Recall that by definition
	\begin{equation}
	\frac{u_a\t x(t)}{u_A\t x(t)}
	= \frac{\frac{1}{2}x_2(t)+x_3(t)}{x_1(t)+\frac{1}{2}x_2(t)}
	\geq \frac{\frac{1}{2}x_2(t)+x_3(t)}{x_1(t)+x_2(t)+x_3(t)}
	\end{equation}
	which is at least equal to both nonnegative expressions
	\begin{equation}
	\frac{1}{2}\frac{x_2(t)}{x_1(t)+x_2(t)+x_3(t)}
	\qquad\text{ and }\qquad
	\frac{x_3(t)}{x_1(t)+x_2(t)+x_3(t)}
	\end{equation}
	Lemma \ref{le5} implies that both these ratios converge towards 0 when $t\to +\infty$ and this permits to conclude the proof of Lemma \ref{le55}.
\end{proof}

Due to Lemma \ref{le55} and the fact that the trajectories are bounded (see Theorem \ref{le01}), $x_2$ and $x_3$ converge towards 0.
We are finally in position to establish the asymptotic behaviour of the population size for each genotype.

\begin{lemm}
	\label{le6}
	For any polymorphic trajectory,
	\begin{equation}
	\lim_{t\to +\infty} x_1(t) = c_1^*,\qquad \lim_{t\to +\infty} x_2(t) = \lim_{t\to +\infty} x_3(t) = 0\ .
	\end{equation}
\end{lemm}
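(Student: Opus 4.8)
The plan is to exploit that, by Lemma~\ref{le55} together with the boundedness of the trajectories (Theorem~\ref{le01}, valid under Assumption~\ref{ass3}), one already has $x_2(t)\to 0$ and $x_3(t)\to 0$; hence only $x_1(t)\to c_1^*$ remains to be shown. The strategy is to show that, as $x_2$ and $x_3$ vanish, the first-component equation \eqref{Sa} of \eqref{SM0}, resp.\ \eqref{La} of \eqref{LM0}, becomes an asymptotically small perturbation of the scalar monomorphic dynamics studied in Lemma~\ref{le1}, namely of $\dot y=f(y)$ with $f(y):=(m_1(b^*(ye_1))-\mu_1(w_1y))\,y$, and then to pass to the limit in that perturbed scalar equation.

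First I would establish that $\dot x_1=f(x_1)+g(t)$ with $g(t)\to 0$. For \eqref{SM0}, writing $M_i:=m_i(b^*(x))$, a direct computation gives $\dfrac{(u_A\t M(x)x)^2}{\bfo\t M(x)x}-M_1x_1=\dfrac{\frac14 M_2^2x_2^2-M_1M_3x_1x_3}{M_1x_1+M_2x_2+M_3x_3}$, whose modulus is at most $\frac14 M_2x_2+M_3x_3\to 0$; for \eqref{LM0}, likewise $|\alpha_1(x)-x_1|\le\frac14 x_2+x_3\to 0$ and $\alpha_2(x),\alpha_3(x)\le u_a\t x\to 0$, so $\Aset(x)-x_1e_1\to 0$. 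Since the trajectory remains in a fixed compact set, $b^*$, $m_1$ and $\mu_1$ are uniformly continuous there, $b^*(x)$ (resp.\ $b^*(\Aset(x))$, using $\Aset(e_1)=e_1$ from Lemma~\ref{ppA}) differs from $b^*(x_1e_1)$ by a vanishing term, and $w\t x$ differs from $w_1x_1$ by $w_2x_2+w_3x_3\to 0$; collecting these estimates and using that $x_1$ is bounded yields the claimed equation. By property~\ref{pppii} in Lemma~\ref{le0} the map $y\mapsto b^*(ye_1)$ is increasing, so $h(y):=m_1(b^*(ye_1))-\mu_1(w_1y)$ is decreasing; $h(0)=m_1(0)-\mu_1(0)>0$ by Assumption~\ref{ass2} and $h(c_1^*)=0$ by Lemma~\ref{monoEqui}. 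Hence $f>0$ on $(0,c_1^*)$, $f<0$ on $(c_1^*,+\infty)$, and $f(0)=f(c_1^*)=0$.

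The convergence $x_1\to c_1^*$ then follows in three moves. \textbf{(a) Upper bound.} For fixed $\varepsilon>0$ one has $f\le-\delta_\varepsilon<0$ on the compact set $[c_1^*+\varepsilon,C_{\max}]$, where $C_{\max}:=\sup_{t\ge 0}\bfo\t x(t)<\infty$; hence once $|g(t)|<\delta_\varepsilon$ the region $\{x_1\ge c_1^*+\varepsilon\}$ is left within finite time and never re-entered, and letting $\varepsilon\to 0$ gives $\limsup_{t}x_1(t)\le c_1^*$. \textbf{(b) No drift to the origin.} Assume, for contradiction, $\liminf_{t}x_1(t)=0$; since $\bfo\t x-x_1=x_2+x_3\to 0$, also $\liminf_{t}\bfo\t x(t)=0$. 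Put $\kappa:=m_1(0)-\mu_1(0)>0$. Summing the components of \eqref{SM0}, resp.\ \eqref{LM0}, and using $b^*(x)=O(\bfo\t x)$, resp.\ $b^*(\Aset(x))=O(\bfo\t x)$, and $w\t x=O(\bfo\t x)$, there is $\rho'\in(0,c_1^*)$ such that $m_1(b^*(x))-\mu_1(w\t x)\ge\kappa/2$, resp.\ $m_1(b^*(\Aset(x)))-\mu_1(w\t x)\ge\kappa/2$, whenever $\bfo\t x\le\rho'$; combining this with the bounds $x_1/\bfo\t x\ge 1/2$ and $(x_2+x_3)/\bfo\t x\le\delta$ for $t\ge T'$ furnished by Lemma~\ref{le55} ($\delta$ chosen small compared with a uniform bound on the $|m_i-\mu_i|$) yields $\frac{d}{dt}(\bfo\t x)\ge\frac{\kappa}{8}\bfo\t x$ whenever $t\ge T'$ and $\bfo\t x(t)\le\rho'$. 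A standard comparison argument then forces $\bfo\t x(t)\ge\min\{\bfo\t x(T'),\rho'/2\}>0$ for all $t\ge T'$, contradicting $\liminf_{t}\bfo\t x=0$; thus $\liminf_{t}x_1(t)>0$. \textbf{(c) Lower bound.} Knowing $x_1(t)\ge m'>0$ eventually, for fixed $\varepsilon>0$ one has $f\ge\delta_\varepsilon'>0$ on $[m',c_1^*-\varepsilon]$, so once $|g(t)|<\delta_\varepsilon'$ the region $\{x_1\le c_1^*-\varepsilon\}$ is left within finite time and never re-entered, and letting $\varepsilon\to 0$ gives $\liminf_{t}x_1(t)\ge c_1^*$. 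Together, (a) and (c) give $x_1(t)\to c_1^*$, which with $x_2(t),x_3(t)\to 0$ is the assertion.

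The main obstacle is step~(b), the exclusion of a drift toward the origin. Steps~(a) and~(c) and the scalar reduction are routine once $g(t)\to 0$ is secured, but (b) genuinely requires the viability Assumption~\ref{ass2} \emph{together with} the allelic-frequency control of Lemma~\ref{le55}: the perturbation estimate on the $x_1$-equation by itself is insufficient, since near the origin the $O(x_2+x_3)$ error is not small relative to $x_1$ itself, which is why the argument must be carried out on $\bfo\t x$ rather than directly on $x_1$.
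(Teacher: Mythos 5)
Your proof is correct and follows the same overall strategy as the paper's: once Lemma~\ref{le55} and the boundedness from Theorem~\ref{le01} give $x_2,x_3\to 0$, the $x_1$-equation is compared with the monomorphic scalar dynamics governed by $m_1(b^*(ye_1))-\mu_1(w_1y)$ and squeezed between sub- and supersolutions whose equilibria tend to $c_1^*$. The one genuine difference lies in how the perturbation is normalised. The paper factors out $x_1$ and writes $\dot x_1=\bigl(m_1(b^*(x_1e_1))-\mu_1(w_1x_1)+\eta(t)\bigr)x_1$ with $\eta(t)\to 0$, i.e.\ the error is \emph{per capita}; since that per-capita rate is bounded below by a positive constant near the origin for large $t$ (by Assumption~\ref{ass2} and the monotonicity of $y\mapsto m_1(b^*(ye_1))-\mu_1(w_1y)$), the two-sided comparison with the roots $c^\pm_{\bar\eta}$ of $m_1(b^*(ce_1))-\mu_1(w_1c)\pm\bar\eta=0$ delivers the upper \emph{and} lower bounds at once, with no separate treatment of a possible collapse to $x_1=0$. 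You instead keep an \emph{additive} error $g(t)\to 0$, which, as you correctly diagnose, is not small relative to $f(x_1)$ near the origin; this forces your extra step~(b), the growth estimate on $\bfo\t x$ combining Assumption~\ref{ass2} with the frequency bounds of Lemma~\ref{le55}. That step is sound but is additional work that the paper's multiplicative normalisation renders unnecessary. On the other hand, your explicit error computations (the identity with numerator $\tfrac14 M_2^2x_2^2-M_1M_3x_1x_3$ for \eqref{SM0}, and $|\alpha_1(x)-x_1|\le\tfrac14 x_2+x_3$ together with $\Aset(e_1)=e_1$ for \eqref{LM0}) substantiate in detail the convergence of the perturbation, which the paper asserts rather tersely. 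Only cosmetic quibbles remain (e.g.\ take $|g(t)|<\delta_\varepsilon/2$ rather than $<\delta_\varepsilon$ to get a strictly negative bound in step~(a)); there is no gap.
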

\begin{proof}
 
Consider e.g.\ system \eqref{SM0}.
Equation \eqref{Sa} may be written as
\begin{equation}
\dot x_1
=  \frac{(u_A\t M(x)x)^2}{\bfo\t M(x)x} - \mu_1(w\t x)x_1
= \left(
\frac{(u_A\t M(x)x)^2}{(\bfo\t M(x)x)x_1}
 - \mu_1(w\t x)
\right) x_1\ .
\end{equation}
Due to \eqref{eq145},
\begin{equation}
\hspace{-.3cm}
\lim_{t\to +\infty} \eta(t) = 0, \quad
\eta(t) := \left(
\frac{(u_A\t M(x)x)^2}{(\bfo\t M(x)x)x_1}
 - \mu_1(w\t x)
- m_1(b^*(x_1(t)e_1)) + \mu_1(w_1x_1(t))
\right)\ .
\end{equation}
Thus, for any polymorphic trajectory of \eqref{SM0} and any $\bar \eta>0$, there exists $T_{\bar\eta}>0$ such that, for any $t\geq T_{\bar\eta}$,
\begin{equation}
\label{eq146}
\left(
\vphantom{\hat{A}}
m_1(b^*(x_1(t)e_1)) - \mu_1(w_1x_1(t)) -\bar\eta
\right) x_1
\leq \dot x_1
\leq \left(
\vphantom{\hat{A}}
m_1(b^*(x_1(t)e_1)) - \mu_1(w_1x_1(t)) +\bar\eta
\right) x_1\ .
\end{equation}

For $\bar\eta>0$ sufficiently small, let $c^\pm_{\bar\eta}$ be the unique positive scalars such that
\begin{equation}
m_1(b^*(c^\pm_{\bar\eta} e_1)) - \mu_1(w_1c^\pm_{\bar\eta}) \pm\bar\eta = 0\ .
\end{equation}
By definition of $c^*_1$, see Lemma \ref{monoEqui}, one has
\begin {equation}
\label{eq147}
c^-_{\bar\eta} < c^*_1 < c^+_{\bar\eta}, \qquad
\lim_{\bar\eta \to 0^+} c^\pm_{\bar\eta} = c^*_1 \ .
\end{equation}

By \eqref{eq146}, one deduces that, for any sufficiently small $\bar\eta>0$,
\begin{equation}
c^-_{\bar\eta} \leq \liminf_{t\to +\infty} x_1(t) \leq \limsup_{t\to +\infty} x_1(t) \leq c^+_{\bar\eta}\ ,
\end{equation}
and finally
\begin{equation}
\liminf_{t\to +\infty} x_1(t) = \limsup_{t\to +\infty} x_1(t) \leq c^*_1\ ,
\end{equation}
by doing $\bar\eta\to 0^+$ and using \eqref{eq147}.
This demonstrates Lemma \ref{le6} in the case of system \eqref{SM0}.
System \eqref{LM0} is treated analogously.
\end{proof}
With the proof of Lemma \ref{le6}, the proof of Theorem \ref{theoI} is now complete.

\section{Conclusions and future issues}
\label{se7}

We have proposed a Mendelian inheritance model that considers the complexity of the life history of insects and its selective pressures. The  latter describes in continuous time a population with two main life phases, governed by birth of the three genotypes of two alleles, density-dependent mortality rates and constant rate of passage to reproductive phase. The model is represented by a system of six scalar ordinary differential equations (one for each genotype in each life phase). 

This first model was simplified using slow manifold theory, to obtain two classes of Mendelian inheritance models of single phase of dimension 3. Both models were derived from assuming one of the phases slower than the other.
These models allow assumptions about the selective pressures faced by genotypes in the ecological niches of each life-phase, in terms of recruitment and mortality, e.g.\ due to the use of larvicides or adulticides.

We have proved that, under appropriate assumptions, the two proposed classes demonstrate the fundamental behaviours expected in population dynamics and population genetics.
In a selectively neutral scenario, 
the population converges asymptotically to a carrying capacity, while Hardy-Weinberg law is valid: the allele frequencies in the polymorphic population are constant, and determine the asymptotic value of the genotypic relative frequencies.
In presence of different selective pressures, adaptive evolution occurs, and in case of dominance or codominance, the population is asymptotically made of individuals of the homozygous genotype having the highest fitness, at the corresponding carrying capacity.

The two proposed models, together with their possible extensions, are sufficiently general to consider the study of several issues of importance related to the use of insecticides and other adaptive phenomena.
Based on direct modelling of the genotypic dynamics (which is considered a correct approach for problems related to the use of insecticides \cite{taylor1975insecticide}), they allow for immediate study of the effects of larvicides and adulticides on the genotypes, as well as the effects on the allelic frequencies.
Besides, simple extensions of the models to incorporate migration would allow to evaluate the consequences of the natural or artificial addition of susceptible genotypes in the evolution of insecticide resistance, following the proposals made in \cite{comins1977development} and \cite{taylor1979suppression}.
Also, one could explore the consequences of the use of a larvicide and adulticide in the context of an evolution-proof insecticide following the line developed in \cite{read2009make,koella2009,gourley2011slowing}.
In addition, the models could be applied to situations in which one wishes to estimate the time taken by a resistant population to revert to a susceptible one, as done in \cite{schechtman2015costly}.

Looking forward, as most traits of evolutionary or economic importance are determined by several genes, an adequate understanding of the evolution of such traits may require the study of multi-locus models \cite{burger2011some}.
 It turns out that the heredity function which models the births is constructed in such a way as to allow extensions to more than two alleles, multiple loci (by means of Kronecker product between inheritance matrices) or polyploid cases.
The modelling procedure presented in the present paper thus offers the ability to accommodate more complicated inheritance configurations, with two or more loci with autosomal inheritance, accounting for e.g.\ sequential or mixed use of insecticides \cite{curtis1985theoretical,mani1985evolution,levick2017two,south2018insecticide}, non-genetic inheritance, unified maternal and autosomal inheritance (as in {\em Wolbachia}) and autosomal resistance for biological control \cite{hoffmann2013facilitating}.
As a last remark, notice that models of species having more than two main phases may be considered too, for example in the case of a holometabolous insect for which the phases of embryo, larva and pupa are all quite fast compared to the adult phase.

Mathematical models played a decisive role in reconciling Mendelian genetics with Darwin's theory of adaptive evolution \cite{ewens2011changes}. In regard to inheritance models, the genetic control for insect pest raises new issues with potentially valuable applications \cite{alphey2014genetic,hoffmann2015wolbachia}. We think that the modelling strategy presented here will help in this task.

\subsection*{Acknowledgments}

P.E.P.E.\ acknowledges the support of CONACyT in the framework of FEEI-PROCIENCIA program (POS007, POSG17-53, PVCT15-273, PVCT17-156, PVCT18-53). C.E.S.\ and P.E.P.E.\ acknowledge the CONACyT-PRONII program.
All authors acknowledge the support of the STIC AmSud project MOSTICAW (2016--2017).

\appendix

\section{Proof of technical results}
\label{se10}

\subsection{Proof of Lemma \ref{a0}}
\label{app1}

The left-hand side of \eqref{a*} is an increasing function of $b$ and varies from $0$ to $+\infty$; while the right-hand side is null if $x=0_3$ and a decreasing function otherwise.
Therefore, by the Implicit function Theorem, there exists a unique solution $b^*(x)$ to the scalar equation
\[
b - \sum_{i=1}^{3} v_im_i(b)x_i = 0
\]
which is of class $C^p$ if every function $m_i$ is of class $C^p$,  $p\in\Nset$.
\hfill $\square$

\subsection{Proof of Lemma \ref{ppA}}
\label{app2}

For Property {\it\ref{ppi}}, one can see that,  for any $j=A,a$,
\begin{equation}
\label{/A1}
u_j\t \Aset(x) = 
\frac{1}{\bfo\t x} ((u_j\t x)^{2}+(u_A\t x)(u_a\t x))= 
\frac{1}{\bfo\t x}(u_A\t x+u_a\t x)u_j\t x = 
u_j\t x.
\end{equation}
as $u_A+u_a= \bfo$.
 Using again this identify, property {\it\ref{ppii}} is deduced from the previous one, as:
\begin{equation}
\bfo\t \Aset(x) = 
u_A\t \Aset(x)+u_a\t \Aset(x)=
u_A\t x+u_a\t x =
\bfo\t x.
\end{equation}

Property {\it\ref{ppiii}} is trivial, and expresses the homogeneity of the function $ \Aset $. 
To show Property {\it\ref{ppiv}},
notice that for any $(j,k)\in\{1,3\}\times\{A,a\}$, $u_k\t e_j = 1$  if $(j,k)=(1,A)$ or $(3,a)$, and 0 otherwise;
and on the other hand that $\bfo\t e_j=1$.
Therefore
\begin{equation}
\Aset(e_j)=\frac{1}{\bfo\t e_j} \begin{pmatrix}
(u_A\t e_j)^2\\ 2(u_A\t e_j)(u_a\t e_j) \\  (u_a\t e_j)^2\end{pmatrix} = e_j
\end{equation}
for any $j=1,3$.
\hfill $\square$

\subsection{Proof of Lemma \ref{le0}}
\label{app3}

$\bullet$
Let $ x \in \mathbb{R}_{+}^{3} \setminus \{0_3\}  $ and $ \lambda>0 $.
By definition (see the statement of Lemma \ref{a0}),
 
\begin{equation}
b^*(\lambda x)=\lambda \sum_{i=1}^{3} v_im_i(b^*(\lambda x))x_i.
\end{equation}
Therefore 
\begin{equation}
\lambda  \min_{i=1,2,3} \{v_i/w_i\} \min_{i=1,2,3} \{m_i(b^*(\lambda x))\}w\t x
\leq b^*(\lambda x)
\leq \lambda  \max_{i=1,2,3} \{v_i /w_i\}\max_{i=1,2,3} \{m_i(b^*(\lambda x))\}w\t x. 
\end{equation} which may be rewritten as an inequality on $\lambda (w\t x)$:
\begin{equation}
\frac{b^*(\lambda x)}{\max_i \{v_i/w_i\} \max_{i}\{ m_i(b^*(\lambda x))\}}
\leq  \lambda (w\t x)
\leq \frac{b^*(\lambda x)}{\min_i \{v_i/w_i\} \min_{i}\{ m_i(b^*(\lambda x))\}}  
\end{equation}
Therefore, when $ \lambda \rightarrow +\infty $,
one has $\displaystyle\frac{b^*(\lambda x)}{\min_i\{v_i/w_i\} \min_{i} \{m_i(b^*(\lambda x))\}} \rightarrow + \infty $, and the convergence is uniform with respect to $ x $ such that $ w\t x=1 $.
Consequently, $ b^*(\lambda x) \rightarrow + \infty $  uniformly in $ x $ such that $w\t x=1 $.

From this one deduces that  $\displaystyle \lim_{\lambda\to +\infty} m_i(b^*(\lambda x))< \lim_{\lambda\to +\infty} \mu_i(\lambda) $, because $ m_i  $ decreases, $ \mu_{i} $ increases and Assumption \ref{ass3} holds.

\noindent $\bullet$
Let us now show the second property on $b^*$.
By definition one has, for any $x\in\Rset_+^3\setminus\{0\}$,
\[
b^*(x)=\sum_{i=1}^{3} v_i m_i(b^*(x))x_i.
\]
Therefore, for any $\lambda, \lambda'\geq 0$,
\[
\lambda \sum_{i=1}^{3} v_i m_i(b^*(\lambda x)) x_i   - \lambda' \sum_{i=1}^{3} v_i m_i(b^*(\lambda' x)) x_i 
= b^*(\lambda x)-b^*(\lambda' x)
\]
Subtracting and adding the term $ \lambda \sum_{i=1}^{3} v_i m_i(b^*(\lambda' x)) x_i$, one gets
\[
\lambda \sum_{i=1}^{3} v_i \left(
\vphantom{\vec{A}}
m_i(b^*(\lambda x)) - m_i(b^*(\lambda' x))
\right) x_i
+ (\lambda- \lambda') \sum_{i=1}^{3} v_i m_i(b^*(\lambda' x)) x_i 
= b^*(\lambda x)-b^*(\lambda' x)
\]
that is
\[
(\lambda- \lambda') \sum_{i=1}^{3} v_i m_i(b^*(\lambda' x)) x_i 
= b^*(\lambda x)-b^*(\lambda' x) - \lambda \sum_{i=1}^{3} v_i \left(
\vphantom{\vec{A}}
m_i(b^*(\lambda x)) - m_i(b^*(\lambda' x))
\right) x_i
\]
Assume e.g.\ $b^*(\lambda x)>b^*(\lambda' x)$.
Due to Assumption \ref{ass0}, the functions $m_i$, $i=1,2,3$, decrease.
Therefore $m_i(b^*(\lambda x)) - m_i(b^*(\lambda' x))<0$, and we deduce from the previous identity that $\lambda> \lambda'$.

One shows similarly that $b^*(\lambda x)<b^*(\lambda' x)$ implies $\lambda<\lambda'$.
In conclusion, $\lambda<\lambda' \Leftrightarrow b^*(\lambda x)<b^*(\lambda' x)$, and this shows that $b^*$ is increasing.
\hfill $\square$

\medskip

\bibliographystyle{unsrt}
\bibliography{mybibfile}

\end{document}